\newtheorem{theorem}{Theorem}[section]
\newtheorem{lemma}[theorem]{Lemma}
\newtheorem{corollary}[theorem]{Corollary}
\theoremstyle{definition}
\newtheorem{definition}[theorem]{Definition}
\newtheorem{example}[theorem]{Example}
\theoremstyle{remark}
\newtheorem{remark}[theorem]{Remark}
\numberwithin{equation}{section}
\let\Im\relax\DeclareMathOperator{\Im}{Im}
\let\Re\relax\DeclareMathOperator{\Re}{Re}
\def\GL{\mathrm{GL}}
\def\SL{\mathrm{SL}}
\def\Li{\mathrm{Li}}
\def\bb{\mathbb{B}}
\begin{document}
%\hfill\texttt{\jobname.tex}\qquad\today
\title{On Arakawa--Kaneko zeta-functions associated with $\GL_2(\mathbb{C})$ and their functional relations}

\author{Yasushi Komori and Hirofumi Tsumura}

\date{\today}

\maketitle

\begin{abstract}
  We construct a certain class of Arakawa--Kaneko zeta-functions associated with $\GL_2(\mathbb{C})$, which includes the ordinary Arakawa--Kaneko zeta-function. We also define poly-Bernoulli polynomials associated with $\GL_2(\mathbb{C})$ which appear in their special values of these zeta-functions. 
We prove some functional relations for these zeta-functions, which 
are regarded as interpolation formulas of various relations among poly-Bernoulli numbers.
Considering their special values, we prove difference relations and duality relations for poly-Bernoulli polynomials associated with $\GL_2(\mathbb{C})$. 
\end{abstract}

\footnotetext{This work was supported by Japan Society for the Promotion of Science, Grant-in-Aid for Scientific Research (C) 25400026 and 15K04788.}

%%%%%%%%%%%%%%%%%%%%%%%%%%%%%%%%%%%%%%%%%%%%                                   
\section{Introduction} \label{sec-1}
%%%%%%%%%%%%%%%%%%%%%%%%%%%%%%%%%%%%%%%%%%%%  

For $k\in \mathbb{Z}$, two types of poly-Bernoulli numbers $\{B_n^{(k)}\}$ and $\{C_n^{(k)}\}$ are defined by Kaneko as follows:
\begin{align}
&\frac{\Li_{k}(1-e^{-t})}{1-e^{-t}}=\sum_{n=0}^\infty B_n^{(k)}\frac{t^n}{n!},  \label{1-1}\\
&\frac{\Li_{k}(1-e^{-t})}{e^t-1}=\sum_{n=0}^\infty C_n^{(k)}\frac{t^n}{n!},  \label{1-2}
\end{align}
where $\Li_{k}(z)$ is the polylogarithm defined by
\begin{equation}
\Li_{k}(z)=\sum_{m=1}^\infty \frac{z^m}{m^k}\quad (|z|<1) \label{1-3}
\end{equation}
(see Kaneko \cite{Kaneko1997} and Arakawa--Kaneko \cite{AK1999}, also Arakawa--Ibukiyama--Kaneko \cite{AIK2014}). Since $\Li_1(x)=-\log(1-x)$, we see that $B_n^{(1)}$ coincides with the ordinary Bernoulli number. 

In this decade, these numbers have been actively investigated (see, for example, Kaneko \cite{Kaneko-Mem}). The most remarkable formulas for them are the following `duality relations':
\begin{align}
& B_m^{(-k)}=B_{k}^{(-m)},\label{1-5}\\
& C_m^{(-k-1)}=C_{k}^{(-m-1)} \label{1-5-2}
\end{align}
for $k,m\in \mathbb{Z}_{\geq 0}$ (see \cite[Theorem 2]{Kaneko1997} and \cite[\S\,2]{Kaneko-Mem}). Recently Kaneko and the second-named author \cite{Kaneko-Tsumura2015} showed \eqref{1-5}, \eqref{1-5-2} and their generalization by investigating the zeta-function of Arakawa--Kaneko type (defined below). 
Also it is known that 
\begin{equation}
B_m^{(k)}=C_{m}^{(k)}+C_{m-1}^{(k-1)} \label{1-5-3}
\end{equation}
for $k\in \mathbb{Z}$ and $m\in \mathbb{Z}_{\geq 1}$ (see \cite[Equation (9)]{AK1999}).

Corresponding to these numbers, Arakawa and Kaneko defined the zeta-function 
\begin{equation}
%\xi(\kk;s)=\frac{1}{\Gamma(s)}\int_0^\infty {t^{s-1}}\frac{\Li_{\kk}(1-e^{-t})}{e^t-1}dt  \label{1-9}
\xi(k;s)=\frac{1}{\Gamma(s)}\int_0^\infty {t^{s-1}}\frac{\Li_{k}(1-e^{-t})}{e^t-1}dt  \quad (\Re s>0) \label{1-9}
\end{equation}
for $k \in {\mathbb{Z}}_{\geq 1}$, %, where 
%\begin{equation}
%\Li_{k_1,\ldots,k_r}(z)=\sum_{1\leq m_1<\cdots<m_r}\frac{z^{m_r}}{m_1^{k_1}m_2^%{k_2}\cdots m_r^{k_r}}  \label{1-7}
%\end{equation}
%is the multiple polylogarithm. Further this 
which can be continued to $\mathbb{C}$ as an entire function (see \cite[Section 3]{AK1999}). Further they considered multiple versions of \eqref{1-9}. Note that $\xi(k;s)$ can be regarded as generalizations of the Riemann zeta-function because $\xi(1;s)=s\zeta(s+1)$. They also showed that
\begin{equation}
\xi(k;-m)=(-1)^m C_m^{(k)} \quad (m\in \mathbb{Z}_{\geq 0}). \label{1-10}
\end{equation}
% (see \cite{AK1999}), where $\{C_n^{(\kk))}\}$ are the multi-poly-Bernoulli numbers of $C$-type defined by 
%\begin{align}
%&\frac{\Li_{k_1,\ldots,k_r}(1-e^{-t})}{e^{t}-1}=\sum_{n=0}^\infty C_n^{(\kk)}\frac{t^n}{n!}  \label{1-6-2}
%\end{align}
%(see Imatomi--Kaneko--Takeda \cite{IKT2014}). 
From the observation of $\xi(k;s)$ and its multiple versions, they gave several relation formulas among the multiple zeta values defined by
\begin{equation}
\zeta(l_1,\ldots,l_r)=\sum_{1<m_1<\cdots<m_r}\frac{1}{m_1^{l_1}\cdots m_r^{l_r}}%\left(=\Li_{l_1,\ldots,l_r}(1)\right)\quad 
\label{MZV}
\end{equation}
for $l_1,\ldots,l_r\in \mathbb{Z}_{\geq 1}$ with $l_r\geq 2$ 
(see \cite[Corollary 11]{AK1999}). 

As a generalization of $\xi(k;s)$, Coppo and Candelpergher \cite{CC2010} defined 
\begin{equation}
\xi(k;s;w)=\frac{1}{\Gamma(s)}\int_0^\infty {t^{s-1}}e^{-wt}\frac{\Li_{k}(1-e^{-t})}{1-e^{-t}}dt  \label{CC}
\end{equation}
for $k\in \mathbb{Z}_{\geq 1}$ and $w>0$, and studied its property. Note that $\xi(k;s;1)=\xi(k;s)$. 

As a twin sibling of \eqref{1-9}, Kaneko and the second-named author \cite{Kaneko-Tsumura2015} recently defined 
\begin{equation}
\eta(k;s)=\frac{1}{\Gamma(s)}\int_{0}^\infty t^{s-1}\frac{\Li_{k}(1-e^t)}{1-e^t}dt \label{1-11}
\end{equation}
for $s\in \mathbb{C}$ and for `any' $k\in \mathbb{Z}$, % and $(\kk)\in \mathbb{Z}_{\leq 0}^r$, 
which interpolates the %multi-
poly-Bernoulli numbers of $B$-type, that is,
\begin{equation}
\eta(k;-m)=B_m^{(k)}\qquad (k\in \mathbb{Z},\ m\in \mathbb{Z}_{\geq 0}). \label{B-interpolate}
\end{equation}
%where 
%\begin{align}
%&\frac{\Li_{k_1,\ldots,k_r}(1-e^{-t})}{1-e^{-t}}=\sum_{n=0}^\infty B_n^{(\kk)}\frac{t^n}{n!}  \label{1-6}
%\end{align}
%(see Imatomi--Kaneko--Takeda \cite{IKT2014}). 
More generally, they defined the multi-variable version of %$\eta(-k;s)$ 
\eqref{1-11} denoted by $\eta((-k_j);(s_j))$ for each $k_j\in \mathbb{Z}_{\geq 0}$, 
and showed certain duality relations for multi-indexed poly-Bernoulli numbers % as 
(see \cite[Theorem 5.7 and 5.10]{Kaneko-Tsumura2015}). %In particular when $r=1$, by \eqref{B-interpolate}, we see that $\eta (-k;-l)=\eta (-l;-k)$ implies \eqref{1-5}. 
%They further found that $\eta (k;l)=\eta (l;k)$ for $k,l\in \mathbb{Z}_{\geq 1}$ without its proof. 

More recently, Yamamoto \cite{Yamamoto} 
considered $\eta(u;s)$ (where $u$ and $s$ are variables) and its multi-variable versions $\eta((u_j);(s_j))$ 
\if0
\eqref{1-5-9}, namely, $\eta (u_1,\ldots,u_r;s_1,\ldots,s_r)$ 
which is defined by replacing $(-k_1,\ldots,-k_r)$ by $(u_1,\ldots,u_r)\in \mathbb{C}^r$ in \eqref{1-5-9}. He showed a functional duality relation
\begin{equation}
\eta (u_1,\ldots,u_r;s_1,\ldots,s_r)=\eta (s_1,\ldots,s_r;u_1,\ldots,u_r) \label{F-dual}
\end{equation}
for $(s_j),\,(u_j)\in \mathbb{C}^r$, which interpolates \eqref{1-5}. 
\fi
and proved functional duality relations for them. In particular, for the case of single zeta-function, he proved
\begin{equation}
\eta(u;s)=\eta(s;u) \quad (u,s\in \mathbb{C}),\label{Yamamoto-F}
\end{equation}
which interpolates \eqref{1-5} at non-positive integer points  by \eqref{B-interpolate}. 

In this paper, we consider, as generalizations of $\xi(k;s)$, $\eta(k;s)$ and $\xi(k;s;w)$, the Arakawa--Kaneko zeta-functions associated with $\GL_2(\mathbb{C})$ defined as follows. 
For $g=\begin{pmatrix}a & b\\ c& d \end{pmatrix}\in\GL_2(\mathbb{C})$, we let
\begin{equation*}
gz=\frac{az+b}{cz+d},\ \ j_D(g,z)=cz+d, \ \ j_N(g,z)=az+b. %\label{def-j}
\end{equation*}
Note that $j_D(g,z)$ coincides with the factor of automorphy for $g\in\SL_2(\mathbb{Z})$ (see \cite[\S\,1.2]{Diamond2005}). 
Let 
\begin{equation}
  \Phi(z,u,y)=  \sum_{m=0}^\infty\frac{z^m}{(m+y)^u}  \label{Lerch-T}
\end{equation}
be the Lerch Transcendent for $z,u,y\in\mathbb{C}$ with $|z|<1$ or ($z=1$ and $\Re u>1$), and $\Re y>0$ (see \cite[\S, 1.11]{Bateman}). For $y,w \in \mathbb{C}$, we define
\begin{equation}
    \xi_D(u,s;y,w;g)=\frac{1}{\Gamma(s)}\int_{0}^\infty t^{s-1}e^{-wt}\frac{\Phi(ge^t,u,y)}{j_D(g,e^t)}dt, \label{Def-main-1}
\end{equation}
which is the main object in this paper. We construct interpolation formulas of the well-known relations among poly-Bernoulli numbers by use of $\xi_D(u,s;y,w;g)$.

% (see Definition \ref{Def-3-1}), under the assumption that the right-hand side of \eqref{Def-main-1} is convergent absolutely. 

In Section \ref{sec-2}, we define the Lerch Transcendent and study its properties and related results.

In Section \ref{sec-3}, we define \eqref{Def-main-1} (see Definition \ref{Def-3-1}) and determine its domain (see Theorem \ref{thm:domain1}).
We confirm %observe 
that $\xi(k;s)$, $\eta(k;s)$ and $\xi(k;s;w)$ can be regarded as special cases of \eqref{Def-main-1} (see Example \ref{Exam-3-1}). 

In Section \ref{sec-4}, we give two types of functional relations among \eqref{Def-main-1} which include \eqref{Yamamoto-F} as a special case 
(see Theorems \ref{thm:trans1} and \ref{thm:trans2}). Combining these formulas, we give interpolation formulas 
of the well-known relations %formulas %for poly-Bernoulli numbers 
including \eqref{1-5}--\eqref{1-5-3} (see Example \ref{Exam-4-6}). %--\ref{Exam-4-6-3}).
%functional interpolations of the relation formulas for poly-Bernoulli numbers like \eqref{1-5}-\eqref{1-5-3} 
%These can be also regarded as generalizations of \eqref{F-dual} for the case $r=1$. 

In Section \ref{sec-5}, we consider the analytic continuation for \eqref{Def-main-1} (see Theorems \ref{thm:domain2}, \ref{thm:domain3} and \ref{AC-uyw}), and introduce several examples of duality relations (see Examples \ref{Exam-5-6} and \ref{Exam-5-6-3}).

In Section \ref{sec-6}, we define the poly-Bernoulli polynomials associated with $\GL_2(\mathbb{C})$ (see Definition \ref{g-Bernoulli}). From the results in Sections \ref{sec-4} and \ref{sec-5}, we give general forms of difference relations and duality relations for them (see Theorems \ref{thm:DR} and \ref{Th-D-F}). 
These include \eqref{1-5}--\eqref{1-5-3} and also the duality relations for poly-Bernoulli polynomials (see Example \ref{Exam-6-6}) given by Kaneko, Sakurai and the second-named author (see \cite{KST2016}). Furthermore, we give new duality relations for certain sums of $C_m^{(-k)}$ (see Example \ref{Exam-C-dual}).

%%%%%%%%%%%%%%%%%%%%%%%%%%%%%%%%%%%%%%%
\section{Preliminaries}\label{sec-2}
%%%%%%%%%%%%%%%%%%%%%%%%%%%%%%%%%%%%%%%
For $z,u,y\in\mathbb{C}$ with $|z|<1$ or ($z=1$ and $\Re u>1$), and $\Re y>0$,
the Lerch transcendent is defined by 
\begin{equation}
  \Phi(z,u,y)=\sum_{n=0}^\infty\frac{z^n}{(y+n)^u},
\end{equation}
which is a generalization of the polylogarithm defined by
\begin{equation}
\Li_u(z)=    \sum_{n=1}^\infty\frac{z^n}{n^u},
\end{equation}
and is related as
\begin{equation}
  z\Phi(z,u,1)=\Li_u(z). \label{Phi-Li}
\end{equation}
For $k\in\mathbb{Z}_{\geq0}$,
the Lerch transcendent satisfies the following.
\begin{align}
\label{eq:FELT1}
  \Phi(z,u,y)
  &=
  z^k\Phi(z,u,y+k)  +\sum_{n=0}^{k-1}\frac{z^n}{(y+n)^{u}}\\
\label{eq:FELT2}
  &=
  z^{-k}\Phi(z,u,y-k) -\sum_{n=1}^{k}\frac{z^{-n}}{(y-n)^{u}}.
\end{align}
\begin{lemma}
\label{lm:IRLT}
For ($\Re u>0$ and $|z|<1$) or ($\Re u>1$ and $z=1$), and $\Re y>0$,
$\Phi(z,u,y)$ has the integral representation
\begin{equation}
    \Phi(z,u,y)
    =
    \frac{1}{\Gamma(u)}
      \int_0^{\infty} x^{u-1}e^{-yx}\frac{1}{1-z e^{-x}}dx.
\end{equation}
This expression gives
the analytic continuation of
$\Phi(z,u,y)$ for
$z\in\mathbb{C}\setminus [1,+\infty)$, $\Re u>0$ and $\Re y>0$.
\end{lemma}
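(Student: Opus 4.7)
The plan is to derive the integral formula directly from the series defining $\Phi$ via the classical Gamma-function substitution, then establish the analytic continuation by checking that the right-hand side is a convergent integral defining a holomorphic function on the larger domain.

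First I would start from the identity
\begin{equation*}
\frac{1}{(y+n)^u} = \frac{1}{\Gamma(u)}\int_0^\infty x^{u-1} e^{-(y+n)x}\,dx,
\end{equation*}
valid for $\Re u > 0$ and $\Re(y+n) > 0$ (which holds for all $n \geq 0$ since $\Re y > 0$), by the substitution $x \mapsto (y+n)x$ in the definition of $\Gamma(u)$. Substituting this into $\Phi(z,u,y) = \sum_{n=0}^\infty z^n/(y+n)^u$ and interchanging sum and integral yields
\begin{equation*}
\Phi(z,u,y) = \frac{1}{\Gamma(u)}\int_0^\infty x^{u-1}e^{-yx} \sum_{n=0}^\infty (ze^{-x})^n\,dx = \frac{1}{\Gamma(u)}\int_0^\infty x^{u-1}e^{-yx}\frac{1}{1-ze^{-x}}\,dx,
\end{equation*}
where the geometric series sums since $|ze^{-x}| < 1$ for $x > 0$ (in the case $z=1$, we have $|ze^{-x}|<1$ for $x>0$ as well). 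The interchange is justified by Fubini's theorem applied to the absolutely convergent double integral; when $|z|<1$ this is immediate, and when $z=1$ we need $\Re u > 1$ to ensure integrability of $x^{\Re u - 1}/(1-e^{-x})$ near $x=0$ (where the integrand behaves like $x^{\Re u - 2}$).

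For the analytic continuation claim, I would verify that for $z \in \mathbb{C}\setminus[1,+\infty)$, $\Re u > 0$, and $\Re y > 0$, the integrand has no singularities on $(0,\infty)$ and the integral converges absolutely. The denominator $1-ze^{-x}$ vanishes only when $ze^{-x}=1$ for some $x \geq 0$, which forces $z \in [1,+\infty)$; hence on our domain the denominator is bounded away from zero uniformly on compact subsets, in particular $|1-ze^{-x}| \geq \min(|1-z|, 1/2)$ for $x$ in a neighborhood of $0$ and $|1-ze^{-x}|\to 1$ as $x\to\infty$. Thus $x^{u-1}$ controls the behavior at $0$ (using $\Re u > 0$) and $e^{-yx}$ controls the decay at $\infty$ (using $\Re y > 0$).

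Finally, to conclude holomorphicity in $(z,u,y)$ on the stated domain, I would apply the standard dominated-convergence-plus-Morera argument: the integrand is jointly continuous and, for each variable separately, holomorphic, with uniform local majorants of the form $C\,x^{\sigma-1}e^{-\rho x}$ on compacts. The main (mild) obstacle is the bookkeeping for the $z=1$ boundary case in the interchange step, which is handled by the $\Re u > 1$ hypothesis; the rest is routine once the estimates on $|1-ze^{-x}|$ are pinned down.
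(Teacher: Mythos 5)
Your proposal is correct and follows essentially the same route as the paper's proof: apply the Gamma-integral identity $\frac{1}{a^u}=\frac{1}{\Gamma(u)}\int_0^\infty e^{-ax}x^{u-1}\,dx$ termwise, interchange sum and integral to sum the geometric series, and then observe that the resulting integral converges and is holomorphic on the larger domain $z\in\mathbb{C}\setminus[1,+\infty)$, $\Re u>0$, $\Re y>0$. You simply spell out the Fubini justification and the nonvanishing of $1-ze^{-x}$ in more detail than the paper does.
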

\begin{proof}
First we assume $|z|<1$ or $z=1$.
By an integral representation of the gamma function
$\Gamma(u)$ for $\Re u>0$, we have
\begin{equation}
  \frac{1}{a^u}=\frac{1}{\Gamma(u)}\int_0^\infty e^{-ax}x^{u-1}dx
\end{equation}
for $\Re a>0$.
For ($\Re u>0$ and $|z|<1$) or ($\Re u>1$ and $z=1$),
by substituting this into the series expression, we obtain
\begin{equation}
  \begin{split}
    \Phi(z,u,y)&=\frac{1}{\Gamma(u)}\sum_{n=0}^\infty
    \int_0^\infty z^n e^{-nx}e^{-yx}x^{u-1}dx  
    \\
    &=\frac{1}{\Gamma(u)}
    \int_0^\infty x^{u-1}e^{-yx}\frac{1}{1-z e^{-x}}dx.
  \end{split}
\end{equation}
By this integral representation, $\Phi(z,u,y)$ is analytically continued 
for $z\in\mathbb{C}\setminus [1,+\infty)$, $\Re u>0$ and $\Re y>0$.
% in $z$
% to $\mathbb{C}\setminus [1,\infty)$.
\end{proof}

For a variable $u$, we define a difference operator $D_u$ by
\begin{equation}
  D_uf(u)=f(u+1).
\end{equation}
We also define the Euler operator
\begin{equation}
  \vartheta_z=z\dfrac{\partial}{\partial z}.
\end{equation}

\begin{lemma}
\label{lm:Dd}
\begin{equation}
  (D_u^{-1}-y)\Phi(z,u,y)=\vartheta_z\Phi(z,u,y).
\end{equation}
\end{lemma}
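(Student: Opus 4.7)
My plan is a straightforward termwise computation from the series definition \eqref{Lerch-T}, valid in the region where both sides converge absolutely (e.g.\ $|z|<1$, $\Re y>0$, and $\Re u$ large enough that shifting $u\mapsto u-1$ preserves convergence); the identity then extends to the full domain of definition by analytic continuation.

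First I would apply $D_u^{-1}$ to the series: since $D_u^{-1}f(u)=f(u-1)$, one has
\begin{equation}
    D_u^{-1}\Phi(z,u,y)=\Phi(z,u-1,y)=\sum_{m=0}^\infty \frac{z^m}{(m+y)^{u-1}}
    =\sum_{m=0}^\infty \frac{(m+y)\,z^m}{(m+y)^{u}}.
\end{equation}
Subtracting $y\,\Phi(z,u,y)$ and using $m=(m+y)-y$ gives
\begin{equation}
    (D_u^{-1}-y)\Phi(z,u,y)=\sum_{m=0}^\infty \frac{\bigl((m+y)-y\bigr)z^m}{(m+y)^{u}}
    =\sum_{m=0}^\infty \frac{m\,z^m}{(m+y)^{u}}.
\end{equation}

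On the other hand, applying $\vartheta_z=z\,\partial/\partial z$ termwise to the series (which is justified by uniform convergence on compact subsets of the disk $|z|<1$) yields
\begin{equation}
    \vartheta_z\Phi(z,u,y)=\sum_{m=0}^\infty \frac{m\,z^{m}}{(m+y)^{u}},
\end{equation}
which agrees with the previous expression. This is the whole argument; there is no real obstacle, only bookkeeping to ensure the manipulations occur in the common region of absolute convergence before invoking the analytic continuation provided by Lemma \ref{lm:IRLT}.
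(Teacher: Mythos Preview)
Your proof is correct and is essentially the same as the paper's: both compute termwise from the series expression to obtain $\Phi(z,u-1,y)=(y+\vartheta_z)\Phi(z,u,y)$ and then rearrange. You merely spell out the intermediate sums and the convergence considerations that the paper leaves implicit.
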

\begin{proof}
By the series expression, we have
\begin{equation}
    \Phi(z,u-1,y)
  =\Bigl(y+z\frac{\partial}{\partial z}\Bigr)\Phi(z,u,y),
\end{equation}
which is rewritten in terms of the difference operator $D_u$.
\end{proof}

\begin{lemma}
\label{lm:rd}
For $n\in \mathbb{Z}_{\geq 0}$, 
\begin{align}
  \Bigl(\prod_{k=1}^n \Bigl(1+\frac{1}{k}\vartheta_z\Bigr)\Bigr)
    \frac{1}{1-ze^{-x}}
    &=\frac{1}{(1-ze^{-x})^{n+1}},
  \\
  \Bigl(\prod_{k=1}^n \Bigl(-1+\frac{1}{k}\vartheta_z\Bigr)\Bigr)
    \frac{ze^{-x}}{1-ze^{-x}}
    &=\Bigl(\frac{ze^{-x}}{1-ze^{-x}}\Bigr)^{n+1}.
\end{align}
\end{lemma}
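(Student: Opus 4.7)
The plan is to prove each identity by induction on $n$, after first simplifying via the substitution $w=ze^{-x}$. Since $x$ is being treated as an independent parameter and $\vartheta_z = z\partial/\partial z$ satisfies $\vartheta_z f(ze^{-x}) = (ze^{-x}) f'(ze^{-x}) = \vartheta_w f(w)$ by the chain rule, both identities reduce to purely one-variable statements:
\begin{equation}
\Bigl(\prod_{k=1}^n\Bigl(1+\tfrac{1}{k}\vartheta_w\Bigr)\Bigr)\frac{1}{1-w}=\frac{1}{(1-w)^{n+1}},
\qquad
\Bigl(\prod_{k=1}^n\Bigl(-1+\tfrac{1}{k}\vartheta_w\Bigr)\Bigr)\frac{w}{1-w}=\Bigl(\frac{w}{1-w}\Bigr)^{n+1}.
\end{equation}

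For the first identity, the $n=0$ case is trivial (empty product). Assuming the formula holds for $n$, I would apply $1+\vartheta_w/(n+1)$ to $1/(1-w)^{n+1}$. Using $\vartheta_w(1-w)^{-(n+1)} = (n+1)w/(1-w)^{n+2}$, the right side becomes $1/(1-w)^{n+1} + w/(1-w)^{n+2}$, and writing $1/(1-w)^{n+1} = (1-w)/(1-w)^{n+2}$ collapses this to $1/(1-w)^{n+2}$, completing the induction. The second identity is handled analogously: assuming $\prod_{k=1}^n(-1+\vartheta_w/k)\cdot w/(1-w) = w^{n+1}/(1-w)^{n+1}$, a direct computation gives $\vartheta_w\bigl(w^{n+1}/(1-w)^{n+1}\bigr) = (n+1)w^{n+1}/(1-w)^{n+2}$, after which applying $-1+\vartheta_w/(n+1)$ and combining with common denominator $(1-w)^{n+2}$ yields $w^{n+2}/(1-w)^{n+2}$, as required.

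No step looks subtle: the only thing to verify cleanly is the chain-rule observation that $\vartheta_z$ on the $z$-variable inside $ze^{-x}$ becomes $\vartheta_w$ on $w=ze^{-x}$, so the operator acts exactly as if we were differentiating the variable $w$ directly. Given that, the inductive step is a one-line algebraic rearrangement in each case.

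As a sanity check one can verify both identities termwise: expanding $1/(1-w)=\sum_{m\geq 0}w^m$, the operator $\vartheta_w$ acts as multiplication by $m$ on the $w^m$ term, and $\prod_{k=1}^n(1+m/k) = \binom{m+n}{n}$ recovers the Taylor coefficient of $1/(1-w)^{n+1}$; similarly, on $w/(1-w)=\sum_{m\geq 1}w^m$, the factor $\prod_{k=1}^n(-1+m/k) = \binom{m-1}{n}$ (which vanishes for $1\leq m\leq n$) matches the coefficients of $(w/(1-w))^{n+1}$. Either argument suffices, but the induction route is shortest to write.
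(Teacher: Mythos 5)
Your proof is correct and follows essentially the same route as the paper: the paper establishes the single-step identity $\bigl(1+\tfrac{1}{k}\vartheta_z\bigr)(1-ze^{-x})^{-k}=(1-ze^{-x})^{-k-1}$ (and its analogue for the second product) and composes these for $k=1,\dots,n$, which is exactly your inductive step after the harmless substitution $w=ze^{-x}$. The termwise check via binomial coefficients is a nice extra but not needed.
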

\begin{proof}
Since
  \begin{equation}
    \begin{split}
      \vartheta_z\frac{1}{(1-ze^{-x})^k}
      &=
      \frac{kze^{-x}}{(1-ze^{-x})^{k+1}}
      \\
      &=
      \frac{k}{(1-ze^{-x})^{k+1}}-
      \frac{k}{(1-ze^{-x})^k},
    \end{split}
  \end{equation}
we have
  \begin{equation}
\Bigl(1+\frac{1}{k}\vartheta_z\Bigr)\frac{1}{(1-ze^{-x})^k}=\frac{1}{(1-ze^{-x})^{k+1}},
  \end{equation}
which yields the first equation.

Similarly
  \begin{equation}
    \begin{split}
      \vartheta_z\Bigl(\frac{ze^{-x}}{1-ze^{-x}}\Bigr)^k
      &=      
      k\Bigl(\frac{ze^{-x}}{1-ze^{-x}}\Bigr)^{k+1}
      +k\Bigl(\frac{ze^{-x}}{1-ze^{-x}}\Bigr)^{k}
    \end{split}
  \end{equation}
implies
  \begin{equation}
\Bigl(-1+\frac{1}{k}\vartheta_z\Bigr)\Bigl(\frac{ze^{-x}}{1-ze^{-x}}\Bigr)^k=\Bigl(\frac{ze^{-x}}{1-ze^{-x}}\Bigr)^{k+1}
  \end{equation}
and the second equation.
\end{proof}

\begin{lemma}
\label{lm:DS}
For $n\in \mathbb{Z}_{\geq 0}$, 
  \begin{gather}
    \frac{1}{n!}\Bigl(\prod_{k=1}^n(D_u^{-1}-y+k)\Bigr)
    \Phi(z,u,y)
 =    \frac{1}{\Gamma(u)}
   \int_0^{\infty} x^{u-1}e^{-yx}\frac{1}{(1-z e^{-x})^{n+1}}dx,
 \\
     \frac{1}{n!}\Bigl(\prod_{k=1}^n(D_u^{-1}-y-k)\Bigr)
    (\Phi(z,u,y)-y^{-u})
 =\frac{1}{\Gamma(u)}
   \int_0^{\infty} x^{u-1}e^{-yx}\Bigl(\frac{ze^{-x}}{1-z e^{-x}}\Bigr)^{n+1}dx.
 \end{gather}
\end{lemma}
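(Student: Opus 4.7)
The plan is to reduce both identities to the integral representation of Lemma~\ref{lm:IRLT} and then apply Lemma~\ref{lm:rd} under the integral sign. First I would recast the products of difference operators in $u$ as polynomials in the Euler operator $\vartheta_z$. By Lemma~\ref{lm:Dd}, $(D_u^{-1}-y)\Phi=\vartheta_z\Phi$; since $\vartheta_z$ acts only on $z$, it commutes with $D_u^{-1}$ and with multiplication by $y$. A straightforward induction on $n$ then yields
\begin{equation*}
\prod_{k=1}^n(D_u^{-1}-y+k)\Phi(z,u,y)=\prod_{k=1}^n(\vartheta_z+k)\Phi(z,u,y)=n!\prod_{k=1}^n\Bigl(1+\frac{\vartheta_z}{k}\Bigr)\Phi(z,u,y),
\end{equation*}
where the last equality is obtained by pulling a factor of $k$ out of each operator.

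Next I would substitute the integral representation of Lemma~\ref{lm:IRLT} for $\Phi$ and interchange the operator $\prod_{k=1}^n(1+\vartheta_z/k)$ with the integral. The operator then acts on the kernel $1/(1-ze^{-x})$, which by the first identity of Lemma~\ref{lm:rd} becomes $1/(1-ze^{-x})^{n+1}$. After dividing by $n!$, this yields the first claimed formula.

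For the second identity I would start from two elementary observations: $\vartheta_z(\Phi-y^{-u})=\vartheta_z\Phi$ (since $y^{-u}$ is independent of $z$), and $(D_u^{-1}-y)y^{-u}=y^{-(u-1)}-y\cdot y^{-u}=0$. Combining these gives $(D_u^{-1}-y)(\Phi-y^{-u})=\vartheta_z(\Phi-y^{-u})$, and the same commutation/induction argument produces
\begin{equation*}
\prod_{k=1}^n(D_u^{-1}-y-k)(\Phi-y^{-u})=\prod_{k=1}^n(\vartheta_z-k)(\Phi-y^{-u})=n!\prod_{k=1}^n\Bigl(-1+\frac{\vartheta_z}{k}\Bigr)(\Phi-y^{-u}).
\end{equation*}
Subtracting $y^{-u}=\Gamma(u)^{-1}\int_0^\infty x^{u-1}e^{-yx}dx$ from the integral representation of $\Phi$ gives
\begin{equation*}
\Phi(z,u,y)-y^{-u}=\frac{1}{\Gamma(u)}\int_0^\infty x^{u-1}e^{-yx}\frac{ze^{-x}}{1-ze^{-x}}dx,
\end{equation*}
and the second identity of Lemma~\ref{lm:rd} then finishes the proof after the same interchange of operator and integral.

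The only point that requires care is the justification of passing $\vartheta_z$ under the integral sign. This is routine on compact subsets of the domain of absolute convergence $\{z\in\mathbb{C}\setminus[1,+\infty)\}\times\{\Re u>0\}\times\{\Re y>0\}$, where the kernel is analytic in $z$ and the factor $x^{u-1}e^{-yx}$ provides uniform integrability, so I do not anticipate any genuine obstacle beyond bookkeeping.
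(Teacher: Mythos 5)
Your proposal is correct and follows essentially the same route as the paper: the authors' proof is the one-line remark that the lemma ``follows from Lemmas \ref{lm:Dd} and \ref{lm:rd}'', i.e.\ precisely your conversion of the difference operators $(D_u^{-1}-y\pm k)$ into $(\vartheta_z\pm k)$ via Lemma \ref{lm:Dd} (using $(D_u^{-1}-y)y^{-u}=0$ for the second identity), followed by applying Lemma \ref{lm:rd} to the kernels of the integral representation from Lemma \ref{lm:IRLT}. Your version merely spells out the commutation/induction step and the interchange of $\vartheta_z$ with the integral, both of which are routine as you note.
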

\begin{proof}
The results follow from
Lemmas \ref{lm:Dd} and \ref{lm:rd}.
\end{proof}

Let $\widehat{\mathbb{C}}=\mathbb{C}\cup\{\infty\}$ denote the Riemann sphere.
For 
$g=\begin{pmatrix}
  a & b \\ c & d
\end{pmatrix}\in\GL_2(\mathbb{C})$,
we define
the M\"obius transformation
\begin{equation}
gz=\dfrac{az+b}{cz+d}
\end{equation}
for $z\in\widehat{\mathbb{C}}$.
Note that it is well known that 
M\"obius transformations are conformal and map
circular arcs to circular arcs, where circular arcs include line segments.
Let
\begin{equation}
  V(g)=\{g1,g\infty\}\cap\{1,\infty\}
\end{equation}
be the intersection of the extremal points of the two circular arcs $g([1,+\infty])$ and $[1,+\infty]$.

Let
\begin{equation}
    j_D(g,z)=cz+d,\qquad
    j_N(g,z)=az+b
\end{equation}
for $z\in\mathbb{C}$. 
Then for $g,h\in\GL_2(\mathbb{C})$, we have
\begin{align}
    j_D(gh,T)&=j_D(g,hT)j_D(h,T),\\
    j_N(gh,T)&=j_N(g,hT)j_D(h,T).
\end{align}

If two circular arcs intersect at their extremal points, we call such point a vertex. 
%Hereafter we mean a vertex by an element of $V(g)\subset\{1,\infty\}$.
Moreover if the vertex angle is zero, then we call the vertex a cusp.

For $Z\in\{1,\infty\}$, we denote $\Tilde{Z}=1/Z\in\{0,1\}$.
Let
\begin{equation}
  W_{a,\epsilon,R}=\{z\in\mathbb{C}~|~0<|z-a|<\epsilon\}\cup%(a,R)
\{z\in\mathbb{R}~|~a<z<R\}
\end{equation}
for $a\geq 0$, $\epsilon,R>0$. We abbreviate $W_{a,\epsilon}=W_{a,\epsilon,+\infty}$.

The following lemmas give certain inequalities under the assumption
that the two circular arcs $g([1,+\infty])$ and $[1,+\infty]$
intersect each other possibly only at their extremal points. 
See Figure \ref{fig:tc} for typical configurations.
These
estimations play important roles when the domains of the main objects
are determined.
Their proofs will be given in Section \ref{sec:pr}.

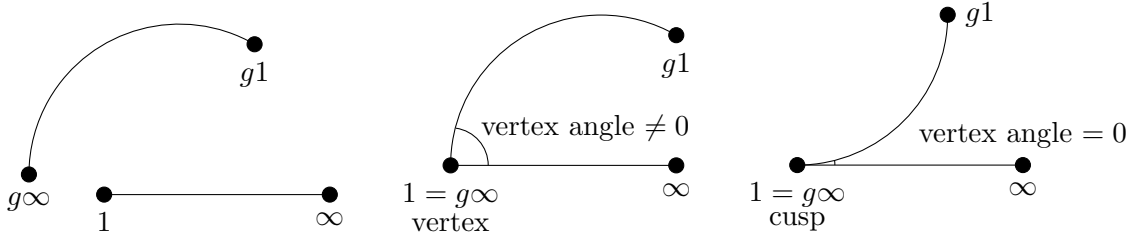
\begin{figure}[h]
  \centering
\begin{tikzpicture}
  \draw (2,2) arc [start angle=60, end angle=180, radius=2] coordinate (A); 
  \draw (0,0) -- (3,0) ; 
  \filldraw (2,2) circle [radius=0.1] node [below=1mm] {$g1$};
  \filldraw (A) circle [radius=0.1] node [below=1mm] {$g\infty$};
  \filldraw (0,0) circle [radius=0.1] node [below=1mm] {$1$};
  \filldraw (3,0) circle [radius=0.1] node [below=1mm] {$\infty$};
\end{tikzpicture}
\quad
\begin{tikzpicture}
  \draw (0,0) arc [start angle=180, end angle=60, radius=2] coordinate (A); 
  \draw (0,0) -- (3,0) ; 
  \node [below=5mm] (0,0) {vertex} ;
  \draw (0:0.5) arc [start angle=0, end angle=82, radius=0.5] node [right=2mm] {vertex angle $\neq 0$}; 
  \filldraw (0,0) circle [radius=0.1] node [below=1mm] {$1=g\infty$};
  \filldraw (A) circle [radius=0.1] node [below=1mm] {$g1$};
  \filldraw (3,0) circle [radius=0.1] node [below=1mm] {$\infty$};
\end{tikzpicture}
\quad
\begin{tikzpicture}
  \draw (0,0) arc [start angle=-90, end angle=0, radius=2] coordinate (A); 
  \draw (0,0) -- (3,0) ; 
  \node [below=5mm] (0,0) {cusp} ;
  \draw (0:0.5) arc [start angle=0, end angle=7, radius=0.5];
  \node at (3,0.4) {vertex angle $=0$} ;
  \filldraw (0,0) circle [radius=0.1] node [below=1mm] {$1=g\infty$};
  \filldraw (A) circle [radius=0.1] node [right=1mm] {$g1$};
  \filldraw (3,0) circle [radius=0.1] node [below=1mm] {$\infty$};
\end{tikzpicture}
\caption{typical configurations}
  \label{fig:tc}
\end{figure}

\begin{lemma}
\label{lm:boundV}
Let $g=\begin{pmatrix}
  a & b \\ c & d
\end{pmatrix}\in\GL_2(\mathbb{C})$ and
 $T_0,X_0\in\{1,\infty\}$.
% $(T_0,X_0)\in\{(1,1),(1,\infty),(\infty,1),(\infty,\infty)\}$.
  Assume that $gT=X$ for only $(T,X)=(T_0,X_0)$ in its neighborhood in $[1,+\infty]^2$.
  \begin{enumerate}
  \item For $0\leq q\leq1$,
there exists $M>0$ such that
\begin{multline}
    \Bigl|\frac{1}{j_D(g,T)}\frac{1}{(1-(gT)X^{-1})}\Bigr|
    \\
    \leq 
  \begin{cases}
    \dfrac{M}{T}\Bigl|\dfrac{T}{\Tilde{T_0}T-1}\Bigr|^{1-q}\Bigl|\dfrac{X}{\Tilde{X_0}X-1}\Bigr|^{q}\qquad&\text{if the vertex is not a cusp}, \\
    \dfrac{M}{T}\Bigl|\dfrac{T}{\Tilde{T_0}T-1}\Bigr|^{2(1-q)}\Bigl|\dfrac{X}{\Tilde{X_0}X-1}\Bigr|^{2q}
    \qquad&\text{if the vertex is a cusp}
  \end{cases}
\end{multline}
in a sufficiently small neighborhood of
$(T_0,X_0)$ in $(1,+\infty)^2$.
\item There exists $\epsilon>0$ such that
\begin{equation}
\label{eq:dist0}
\frac{1}{\epsilon}\Bigl|\dfrac{\Tilde{X_0}X-1}{X}\Bigr|
>\Bigl|\dfrac{\Tilde{T_0}T-1}{T}\Bigr|
>\epsilon \Bigl|\dfrac{\Tilde{X_0}X-1}{X}\Bigr|
%   \Bigl|\dfrac{\Tilde{T_0}T-1}{T}\Bigr|
% >\epsilon \Bigl|\dfrac{\Tilde{X_0}X-1}{X}\Bigr|,\qquad
%   \Bigl|\dfrac{\Tilde{X_0}X-1}{X}\Bigr|
% >\epsilon \Bigl|\dfrac{\Tilde{T_0}T-1}{T}\Bigr|
\end{equation}
for any 
pair $(T,X)$ satisfying
$gT=X$ in
a sufficiently small neighborhood
of $(T_0,X_0)$ in $\mathbb{C}^2$.
\end{enumerate}
\end{lemma}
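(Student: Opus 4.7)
My plan is a local analysis around $(T_0, X_0)$ in the natural ``distance'' coordinates $\nu_T = (\tilde T_0 T - 1)/T$ and $\nu_X = (\tilde X_0 X - 1)/X$, whose reciprocals appear in the stated bounds. I would treat the four possibilities $(T_0, X_0) \in \{1, \infty\}^2$ uniformly by first conjugating $g$ with the involution $s \mapsto s/(s-1)$, which swaps $1$ and $\infty$ and preserves $|\nu|$ up to bounded factors, reducing to the representative case $T_0 = X_0 = 1$. In this case, $g \cdot 1 = 1$ becomes $a + b = c + d$, and a direct expansion yields
\begin{equation}
gT - 1 = \lambda(T-1) - \lambda\mu(T-1)^2 + O((T-1)^3),
\end{equation}
with $\lambda = (a-c)/(c+d) \neq 0$ (since $\det g = (a-c)(c+d) \neq 0$) and $\mu = c/(c+d)$. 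The tangent direction of $g([1,+\infty])$ at the vertex $1$ is along $\lambda$, while that of $[1,+\infty]$ is $\mathbb{R}_{>0}$, so the vertex is a cusp precisely when $\lambda \in \mathbb{R}_{>0}$.

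Part~(2) is immediate from this expansion: $g$ is locally biholomorphic and $X - 1 = \lambda(T-1)(1 + O(T-1))$, so $|T-1|$ and $|X-1|$ are comparable in a complex neighborhood of $(1,1)$; translating back via $|\nu_T| \sim |T-1|$ and $|\nu_X| \sim |X-1|$ yields \eqref{eq:dist0}. For Part~(1), since $|j_D(g,T)| = |(c+d) + c(T-1)|$ is bounded above and away from zero and $|X|$ is bounded near $1$, it suffices to bound $|X - gT| = |X_1 - \lambda T_1 + \lambda\mu T_1^2 + \cdots|$ from below in the real positive quantities $T_1 = T - 1$ and $X_1 = X - 1$. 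In the non-cusp case $\lambda \notin \mathbb{R}_{>0}$, the estimate
\begin{equation}
|X_1 - \lambda T_1| \geq c\,\max(X_1, |\lambda| T_1)
\end{equation}
follows by projection to the imaginary axis when $\Im\lambda \neq 0$ and directly from positivity when $\Re\lambda \leq 0$; combined with $\max(X_1, T_1) \geq X_1^{q} T_1^{1-q}$ for $q \in [0,1]$, this produces the claimed single-exponent bound.

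In the cusp case $\lambda \in \mathbb{R}_{>0}$ the linear part $X_1 - \lambda T_1$ may vanish, forcing the analysis to second order: the no-local-intersection hypothesis implies $\Im \mu \neq 0$ (otherwise $g$ would map $\mathbb{R}$ to $\mathbb{R}$ locally and the arcs would overlap), whence $|\Im(X - gT)| \sim T_1^2$ and so $|X - gT| \geq c\, T_1^2$. A case split on whether $X_1 \geq 2\lambda T_1$ (where the real part dominates and $|X - gT| \geq X_1/2 \geq X_1^2/2$) or $X_1 < 2\lambda T_1$ (where $T_1^2 \geq c\,X_1^2$) also gives $|X - gT| \geq c' X_1^2$ in a small neighborhood. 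The elementary fact that $M \geq A$ and $M \geq B$ imply $M \geq A^{1-q}B^{q}$ for $A, B \geq 0$, $q \in [0,1]$, then upgrades the pair of bounds to the doubled-exponent estimate. The main obstacle is the cusp case, which demands the second-order expansion together with the non-trivial geometric extraction of $\Im \mu \neq 0$ from the hypothesis; a secondary care point is tracking the explicit $1/T$ prefactor across the four cases, which ultimately comes from $|1/j_D(g,T)| \sim 1/T$ as $T \to \infty$ when $T_0 = \infty$ and $c \neq 0$.
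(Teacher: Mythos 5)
Your proposal is correct in substance and reaches the same two key dichotomies as the paper, but by a genuinely different technical route. The paper normalizes the vertex to the origin via the explicit charts $k_1=\begin{pmatrix}1&-1\\1&0\end{pmatrix}$, $k_\infty=\begin{pmatrix}0&-1\\-1&0\end{pmatrix}$, rewrites $j_D(g,T)(1-(gT)X^{-1})$ exactly as $(\alpha U+\beta-Y(\gamma U+\delta))/(U+(-1)^{\Tilde{T_0}}\Tilde{T_0})$ (this identity is where the $1/T$ prefactor comes from), and then bounds $|\alpha U-\delta Y-\gamma UY|^2$ from below, for all $q$ at once, by minimizing the quadratic-form ratio $(a(U)Y^2-2b(U)UY+c(U)U^2)/(U^{2-q}Y^q)$ in $Y$ for fixed $U$ (Lemmas \ref{lm:boundn1}, \ref{lm:boundn2}); the cusp/non-cusp split appears there as $b(0)=\sqrt{a(0)c(0)}$ versus $b(0)<\sqrt{a(0)c(0)}$, with the second-order information carried by $K=\lim(ac-b^2)/U^2=(\Im\alpha\overline{\gamma})^2>0$. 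Your version replaces this calculus with a Taylor expansion of the M\"obius map and elementary homogeneity/imaginary-part arguments, establishing the $q=0$ and $q=1$ endpoint bounds and interpolating via $M\geq A,\,B\Rightarrow M\geq A^{1-q}B^{q}$; your conditions $\lambda\in\mathbb{R}_{>0}$ (cusp) and $\Im\mu\neq0$ (forced by the no-overlap hypothesis, since $\lambda,\mu\in\mathbb{R}$ would make $g$ real up to scalar) are exactly the paper's $\alpha\overline{\delta}\in\mathbb{R}_{>0}$ and $\Im(\alpha\overline{\gamma})\neq0$ in other coordinates, and your Part (2) argument matches Lemma \ref{lm:div}. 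Your route is arguably more transparent; what it leaves implicit, and what you should actually write out, are (i) the cocycle bookkeeping when transporting the estimate from the normalized case back to the other three vertex configurations (your conjugation changes both $j_D(g,T)$ and the quantity $1-(gT)X^{-1}$, so you need the analogue of the paper's identity \eqref{eq:jD}, not just ``$|\nu|$ is preserved''), and (ii) the lower bound $|X_1-\lambda T_1|\geq c\max(X_1,|\lambda|T_1)$ in the remaining subcase $\Re\lambda>0$, $\Im\lambda\neq0$, where neither the imaginary projection alone nor positivity of the real part suffices; a one-line homogeneity-and-compactness argument (the set $\{x-\lambda t:\ x,t\geq0,\ \max(x,t)=1\}$ avoids $0$ when $\lambda\notin\mathbb{R}_{>0}$) closes it.
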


\begin{lemma}
\label{lm:boundNV}
Let $g=\begin{pmatrix}
  a & b \\ c & d
\end{pmatrix}\in\GL_2(\mathbb{C})$ 
be such that
\begin{equation}
g([1,+\infty])\cap[1,+\infty]\subset \{g1,g\infty\}\cap\{1,\infty\}=V(g).
\end{equation}
Let 
$N$ be a neighborhood of $\{(T_0,X_0)~|~X_0\in V(g),T_0=g^{-1}X_0\}$ in $\widehat{\mathbb{C}}^2$.
Then
there exist $\epsilon>0$ and $M>0$ such that
\begin{equation}
  \Bigl|\frac{1}{j_D(g,T)}\frac{1}{(1-(gT)X^{-1})}\Bigr|
  \leq \frac{M}{|T|}
\end{equation}
for all $(T,X)\in W_{1,\epsilon}^2\setminus N$.
\end{lemma}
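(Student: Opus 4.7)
The plan is to convert the desired bound into a boundedness statement for a rational function on a compact subset of $\widehat{\mathbb{C}}^2$. Using $j_D(g,T)\cdot gT=j_N(g,T)$, I would first rewrite
\begin{equation*}
\frac{T}{j_D(g,T)\bigl(1-(gT)X^{-1}\bigr)}
=\frac{TX}{(cX-a)T+(dX-b)}=:\widetilde{F}(T,X),
\end{equation*}
so the claim reduces to showing that $\widetilde{F}$ is bounded on $W_{1,\epsilon}^2\setminus N$ for some $\epsilon>0$; since $|T|>1-\epsilon>0$ on $W_{1,\epsilon}$ once $\epsilon<1$, this reduction is harmless. The function $\widetilde{F}$ extends to a rational function on $\widehat{\mathbb{C}}^2$ whose pole locus is exactly the graph $S_g:=\{(T,X)\in\widehat{\mathbb{C}}^2:gT=X\}$ of the M\"obius transformation $g$; its only indeterminacy points sit on $\{T=0\}\cup\{X=0\}$, hence outside $\overline{W_{1,\epsilon}^2}$ once $\epsilon<1$.

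The crucial step is to choose $\epsilon>0$ so that $S_g\cap\overline{W_{1,\epsilon}^2}\subset N$; this is where the hypothesis $g([1,+\infty])\cap[1,+\infty]\subset V(g)$ is used. I would argue by contradiction: if no such $\epsilon$ existed, one could pick $(T_n,X_n)\in S_g\cap\overline{W_{1,1/n}^2}\setminus N$ for every $n$ and, by compactness of $\widehat{\mathbb{C}}^2$, extract a subsequential limit $(T_*,X_*)$. Since $\overline{W_{1,1/n}}\subset\overline{W_{1,1/m}}$ for $n\ge m$ and $\bigcap_{m\ge 1}\overline{W_{1,1/m}}=[1,+\infty]$ in $\widehat{\mathbb{C}}$, both $T_*$ and $X_*$ lie in $[1,+\infty]$; continuity of $g$ on $\widehat{\mathbb{C}}$ then gives $gT_*=X_*$. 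The hypothesis forces $X_*\in g([1,+\infty])\cap[1,+\infty]\subset V(g)$ and $T_*=g^{-1}X_*$, so $(T_*,X_*)$ is one of the vertex pairs around which $N$ is an open neighborhood, contradicting $(T_n,X_n)\notin N$ for all $n$.

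With such an $\epsilon$ fixed, the set $K:=\overline{W_{1,\epsilon}^2}\setminus N$ is closed in the compact space $\widehat{\mathbb{C}}^2$, hence compact, and by construction disjoint from $S_g$. Therefore $\widetilde{F}|_K$ is continuous with finite values, hence bounded by some $M>0$. Since $W_{1,\epsilon}^2\setminus N\subset K$, this gives the desired estimate
\begin{equation*}
\Bigl|\frac{1}{j_D(g,T)(1-(gT)X^{-1})}\Bigr|=\frac{|\widetilde{F}(T,X)|}{|T|}\le\frac{M}{|T|}.
\end{equation*}
I expect the main obstacle to be the confinement step $S_g\cap\overline{W_{1,\epsilon}^2}\subset N$, namely translating the geometric hypothesis that the arcs $g([1,+\infty])$ and $[1,+\infty]$ meet only at vertex points into the topological statement that every pole of $\widetilde{F}$ surviving inside the shrinking compact set $\overline{W_{1,\epsilon}^2}$ must accumulate on the finite set of vertex pairs.
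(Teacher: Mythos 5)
Your proof is correct and follows essentially the same route as the paper's: both arguments compactify the domain, use the hypothesis $g([1,+\infty])\cap[1,+\infty]\subset V(g)$ to confine the zero set of $(cX-a)T+(dX-b)$ (equivalently, the graph $gT=X$) to the vertex pairs covered by $N$, and then invoke compactness to obtain a uniform bound off $N$. The only difference is cosmetic: the paper transports $[1,+\infty]^2$ to $[0,1]^2$ by the explicit M\"obius maps $k_{T_0},k_{X_0}$ and bounds $|\alpha U+\beta-Y(\gamma U+\delta)|$ from below, whereas you stay in $\widehat{\mathbb{C}}^2$ and bound the rational function $TX/((cX-a)T+(dX-b))$ from above on the compact complement of $N$.
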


%%%%%%%%%%%%%%%%%%%%%%%%%%%%%%%%%%%%%%%%%%
\section{Arakawa--Kaneko zeta-functions associated with $\GL_2(\mathbb{C})$} \label{sec-3}
%%%%%%%%%%%%%%%%%%%%%%%%%%%%%%%%%%%%%%%%%%

%From now on, in the rest of this paper, 
Here and hereafter
we only consider $g\in\GL_2(\mathbb{C})$ which satisfies that 
\begin{equation}
\label{eq:def_cond}
  g([1,+\infty])\cap[1,+\infty]\subset \{g1,g\infty\}\cap\{1,\infty\}=V(g).
\end{equation}
In this section, 
we give the definition of generalizations of the Arakawa--Kaneko zeta-function. The domains of the functions will be given later, which depend on the configuration of the three points $\{g0,g1,g\infty\}$ on the Riemann sphere $\widehat{\mathbb{C}}$.
%one of which is fixed as $[1,+\infty]$, and which intersect each other at most at their extremal points. 

\begin{definition}\label{Def-3-1}
For $g\in\GL_2(\mathbb{C})$ satisfying \eqref{eq:def_cond}, 
we define the Arakawa--Kaneko zeta-function associated with $g$ by
\begin{equation}
\label{eq:IR1}
    \xi_D(u,s;y,w;g)=\frac{1}{\Gamma(s)}\int_0^{\infty} t^{s-1}e^{-wt}\frac{\Phi(ge^t,u,y)}{j_D(g,e^t)}dt.
  \end{equation}
We define an auxiliary function
  \begin{equation}
    \xi_N(u,s;y,w;g)=\xi_D(u,s;y+1,w;g).
  \label{xi-D-N}
  \end{equation}
% We define two auxiliary functions
%   \begin{equation}
%   \begin{split}
%     \xi_D(u,s;y,w;g)&=\xi(u,s;y,w;g),\\
%     \xi_N(u,s;y,w;g)&=\xi(u,s;y+1,w;g).
%   \end{split}
%   \label{xi-D-N}
%   \end{equation}
\end{definition}

We have the following integral representation of $\xi_{N}(u,s;y,w;g)$, which clarifies the meaning of the subscripts ``$D$'' and ``$N$''.
\begin{lemma}
\label{lm:DN}
  \begin{equation}
\label{eq:IR2}
      \xi_{N}(u,s;y,w;g)
      =\frac{1}{\Gamma(s)}\int_0^{\infty} t^{s-1}e^{-wt}\frac{(\Phi(ge^t,u,y)-y^{-u})}{j_N(g,e^t)}dt.
\end{equation}
\end{lemma}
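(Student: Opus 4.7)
The plan is to start from the definition $\xi_N(u,s;y,w;g) = \xi_D(u,s;y+1,w;g)$ and transform the integrand using the basic shift property of the Lerch transcendent so that the $\Phi$ argument reverts to $y$ while the denominator changes from $j_D$ to $j_N$.

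First I would write out the integral representation
\begin{equation}
\xi_N(u,s;y,w;g)=\frac{1}{\Gamma(s)}\int_0^\infty t^{s-1}e^{-wt}\frac{\Phi(ge^t,u,y+1)}{j_D(g,e^t)}\,dt
\end{equation}
directly from \eqref{eq:IR1} applied with $y$ replaced by $y+1$. Then I would invoke the shift formula \eqref{eq:FELT1} with $k=1$, namely $\Phi(z,u,y)=z\,\Phi(z,u,y+1)+y^{-u}$, rearranged as
\begin{equation}
\Phi(z,u,y+1)=\frac{\Phi(z,u,y)-y^{-u}}{z},
\end{equation}
and substitute $z=ge^t$.

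The key algebraic observation is that the M\"obius action and the factors of automorphy are tied together by the identity
\begin{equation}
(ge^t)\,j_D(g,e^t)=\frac{ae^t+b}{ce^t+d}\,(ce^t+d)=ae^t+b=j_N(g,e^t),
\end{equation}
so that dividing the shifted Lerch transcendent by $j_D(g,e^t)$ exactly converts the denominator to $j_N(g,e^t)$:
\begin{equation}
\frac{\Phi(ge^t,u,y+1)}{j_D(g,e^t)}=\frac{\Phi(ge^t,u,y)-y^{-u}}{(ge^t)\,j_D(g,e^t)}=\frac{\Phi(ge^t,u,y)-y^{-u}}{j_N(g,e^t)}.
\end{equation>
Substituting this back into the integral yields \eqref{eq:IR2}.

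There is no serious analytic obstacle here, since the shift identity is purely algebraic and the manipulation takes place under the integral sign without altering its convergence domain (both expressions coincide as integrands for each $t>0$ at which the original integral converges, and this is the range already permitted by Definition \ref{Def-3-1}). The only care needed is to note that the subtraction of $y^{-u}$ in the numerator precisely cancels the singularity that would otherwise arise from the factor $1/(ge^t)$ when $ge^t\to 0$, so the right-hand side is a bona fide integrand on the same domain where $\xi_N$ is defined.
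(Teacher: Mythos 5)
Your proof is correct and follows essentially the same route as the paper: both rely on the shift identity $z\,\Phi(z,u,y+1)=\Phi(z,u,y)-y^{-u}$ together with the algebraic relation $(gz)\,j_D(g,z)=j_N(g,z)$ to convert the denominator from $j_D$ to $j_N$. Your additional remark about the cancellation of the apparent singularity at $ge^t=0$ is a nice (if inessential) touch; nothing further is needed.
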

\begin{proof}
Since
  \begin{equation}
  z\Phi(z,u,y+1)=\Phi(z,u,y)-y^{-u},
\end{equation}
we have
  \begin{equation}
    \begin{split}
      \xi_{N}(u,s;y,w;g)
      &=\frac{1}{\Gamma(s)}\int_0^{\infty} t^{s-1}e^{-wt}\frac{ge^t\Phi(ge^t,u,y+1)}{j_N(g,e^t)}dt
      \\
      &=\frac{1}{\Gamma(s)}\int_0^{\infty} t^{s-1}e^{-wt}\frac{(\Phi(ge^t,u,y)-y^{-u})}{j_N(g,e^t)}dt.
    \end{split}
\end{equation}
\end{proof}
By the integral representation of the Lerch transcendent in Lemma \ref{lm:IRLT},
we have double integral representations of the Arakawa--Kaneko zeta-functions.
\begin{lemma}
\label{lm:DIR}
  \begin{align}
    \label{eq:IRXD}
    \xi_D(u,s;y,w;g)&=\frac{1}{\Gamma(s)\Gamma(u)}
    \int_0^{\infty}\int_0^{\infty}
    \frac{t^{s-1}x^{u-1}e^{-wt}e^{-yx}}{j_D(g,e^t)}
    \frac{1}{1-(ge^t) e^{-x}}
    dtdx,
\\
    \label{eq:IRXN}
    \xi_N(u,s;y,w;g)&=\frac{1}{\Gamma(s)\Gamma(u)}
    \int_0^{\infty}\int_0^{\infty}
    \frac{t^{s-1}x^{u-1}e^{-wt}e^{-yx}}{j_N(g,e^t)}
    \frac{(ge^t) e^{-x}}{1-(ge^t) e^{-x}}
    dtdx.
  \end{align}
\end{lemma}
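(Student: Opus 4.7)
The plan is to derive both identities by inserting the integral representation of the Lerch transcendent from Lemma \ref{lm:IRLT} into the one-variable integrals that define $\xi_D$ and $\xi_N$, and then justifying Fubini's theorem to interchange the order of integration.

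For the first identity, I would start from the defining formula \eqref{eq:IR1} of $\xi_D$. Assuming $\Re u>0$ (so that Lemma \ref{lm:IRLT} applies with $z=ge^t$, noting $ge^t\in\mathbb{C}\setminus[1,+\infty)$ on the integration range thanks to the condition \eqref{eq:def_cond}), I would substitute
\begin{equation}
\Phi(ge^t,u,y)=\frac{1}{\Gamma(u)}\int_0^\infty x^{u-1}e^{-yx}\frac{1}{1-(ge^t)e^{-x}}dx
\end{equation}
into \eqref{eq:IR1}, pull the $x$-integral outside the $t$-integral, and combine the factors to obtain \eqref{eq:IRXD}.

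For the second identity, the cleanest route is to start from Lemma \ref{lm:DN} rather than from the definition $\xi_N(u,s;y,w;g)=\xi_D(u,s;y+1,w;g)$. Writing $\Phi(ge^t,u,y)-y^{-u}=(ge^t)\Phi(ge^t,u,y+1)$ and then applying Lemma \ref{lm:IRLT} at the argument $y+1$ gives
\begin{equation}
(ge^t)\Phi(ge^t,u,y+1)=\frac{1}{\Gamma(u)}\int_0^\infty x^{u-1}e^{-yx}\frac{(ge^t)e^{-x}}{1-(ge^t)e^{-x}}dx,
\end{equation}
after absorbing the factor $e^{-x}$ into $e^{-(y+1)x}$ to produce $e^{-yx}$. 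Substituting this into \eqref{eq:IR2} and swapping integrals yields \eqref{eq:IRXN}. (Equivalently, one can start from \eqref{eq:IRXD} with $y$ replaced by $y+1$ and use $j_D(g,e^t)\cdot(ge^t)=j_N(g,e^t)$; this identification is what truly motivates the notation and shows why both forms come from a single master formula.)

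The only real content is the Fubini justification, which is the main obstacle. One needs absolute integrability of the double integrand on $(0,\infty)^2$. Near $t=0$ and $x=0$ the integrand behaves like $t^{\Re s-1}x^{\Re u-1}$ times bounded factors, giving integrability provided $\Re s>0$ and $\Re u>0$. The behavior as $t,x\to\infty$ is controlled by $e^{-\Re w\, t}e^{-\Re y\, x}$ together with the boundedness of the kernels $(j_D(g,e^t)(1-(ge^t)e^{-x}))^{-1}$ and $(j_N(g,e^t))^{-1}(ge^t)e^{-x}(1-(ge^t)e^{-x})^{-1}$; the latter is precisely the content of Lemmas \ref{lm:boundV} and \ref{lm:boundNV} under the standing assumption \eqref{eq:def_cond}. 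Once absolute integrability is established in a suitable open region of the parameters, Fubini applies and the interchange of integration is legitimate; the lemma then follows.
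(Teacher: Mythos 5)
Your proposal is correct and follows essentially the same route as the paper, which obtains both formulas by substituting the integral representation of $\Phi$ from Lemma \ref{lm:IRLT} into \eqref{eq:IR1} and \eqref{eq:IR2}. Your added discussion of absolute integrability matches the estimates the paper defers to Lemma \ref{lm:est} and Theorem \ref{thm:domain1}, so nothing is missing.
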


To determine the domain of $\xi_D(u,s;y,w;g)$,
we need to study when the integral \eqref{eq:IRXD} is convergent.
Here we give a sufficient condition below. It should be noted that
generally the domain is wider and is dependent on $g$. 
To describe the domain, we define the following constants.

\begin{definition}\label{def-nu-mu}
Consider the two circular arcs $g([1,+\infty])$ and $[1,+\infty]$ on the Riemann sphere $\widehat{\mathbb{C}}$.
Then
for $T_0,X_0\in\{1,\infty\}$, we fix $\mu_{T_0},\nu_{X_0}\geq 0$ as follows.
For $gT_0\notin V(g)$ (resp.~$X_0\notin V(g)$), we set $\mu_{T_0}=0$ (resp.~$\nu_{X_0}=0$).
Further for a pair $(T_0,X_0)$ such that $gT_0=X_0\in V(g)$, we set
\begin{equation}
    \mu_{T_0}+\nu_{X_0}=
  \begin{cases}
    1\qquad&\text{if $gT_0=X_0$ is not a cusp},\\
    2\qquad&\text{if $gT_0=X_0$ is a cusp}.
  \end{cases}
\end{equation}
\end{definition}

\begin{lemma}
\label{lm:est}
\begin{enumerate}
\item There exists $M>0$ such that
for all $(t,x)\in(0,+\infty)^2$,
\begin{equation}
\label{eq:est1}
  \Bigl|\frac{1}{j_D(g,e^t)}
  \frac{1}{1-(ge^t) e^{-x}}\Bigr|
  \leq
  M
  t^{-\mu_1}x^{-\nu_1}e^{(\mu_\infty-1) t}
  e^{\nu_\infty x}(t+1)^{\mu_1}(x+1)^{\nu_1}.
\end{equation}

%Assume $g1\neq 1$.
% If $g1\notin\{1,\infty\}$, then
% there exists $M>0$ and $\epsilon>0$ such that
% for all $(t,x)\in W_{0,\epsilon}\times(0,+\infty)$,
% \begin{equation}
% \label{eq:est2}
%   \Bigl|\frac{1}{j_D(g,e^t)}
%   \frac{1}{1-(ge^t) e^{-x}}\Bigr|
%   \leq
%   M 
%   x^{-\nu_1}e^{(\mu_\infty-1) \Re t}e^{\nu_\infty x}(x+1)^{\nu_1}.
% \end{equation}
% If $1\notin\{g1,g\infty\}$, then
% there exists $M>0$ and $\epsilon>0$ such that
% for all $(t,x)\in (0,+\infty)\times W_{0,\epsilon}$,
% \begin{equation}
% \label{eq:est3}
%   \Bigl|\frac{1}{j_D(g,e^t)}
%   \frac{1}{1-(ge^t) e^{-x}}\Bigr|
%   \leq
%   M
%   t^{-\mu_1}e^{(\mu_\infty-1) t}e^{\nu_\infty \Re x}(t+1)^{\mu_1}.
% \end{equation}
% If $g1\notin\{1,\infty\}$ and $1\notin\{g1,g\infty\}$, then
% there exists $M>0$ and $\epsilon>0$ such that
% for all $(t,x)\in W_{0,\epsilon}^2$,
% \begin{equation}
% \label{eq:est4}
%   \Bigl|\frac{1}{j_D(g,e^t)}
%   \frac{1}{1-(ge^t) e^{-x}}\Bigr|
%   \leq
%   M
%   e^{(\mu_\infty-1) \Re t}e^{\nu_\infty \Re x}.
% \end{equation}
\item Let $Z$ be a neighborhood of $\{(\log T_0,\log X_0)~|~X_0\in V(g),T_0=g^{-1}X_0\}$ in $\mathbb{C}^2$. Then
there exist $M>0$ and $\epsilon>0$ such that
for all $(t,x)\in W_{0,\epsilon}^2\setminus Z$,
\begin{equation}
\label{eq:est3}
  \Bigl|\frac{1}{j_D(g,e^t)}
  \frac{1}{1-(ge^t) e^{-x}}\Bigr|
  \leq
  M e^{-\Re t}.
\end{equation}
\item If $g1\neq1$, then for any sufficiently large $R>0$,
there exist $M>0$ and $\epsilon>0$ such that
for all $(t,x)\in W_{0,\epsilon,R}^2$,
%for all $(t,x)\in (W_{0,\epsilon}\cap \{z\in\mathbb{C}~|~\Re z<R\})^2$,
\begin{equation}
\label{eq:est4}
  \Bigl|\frac{1}{j_D(g,e^t)}
  \frac{1}{1-(ge^t) e^{-x}}\Bigr|
  \leq
  M.
\end{equation}
\item
If $g1=\infty$, then
there exists $\epsilon>0$ such that
\begin{equation}
\label{eq:dist1}
|t|>\epsilon e^{-x}
\end{equation}
for any 
pair $(t,x)$ satisfying
$ge^t=e^x$ in
a sufficiently small neighborhood
of $(0,+\infty)$ in $\mathbb{C}\times\mathbb{R}$.
\item
If $g\infty=1$, then
there exists $\epsilon>0$ such that
\begin{equation}
\label{eq:dist2}
|x|>\epsilon e^{-t}
\end{equation}
for any 
pair $(t,x)$ satisfying
$ge^t=e^x$ in
a sufficiently small neighborhood
of $(+\infty,0)$ in $\mathbb{R}\times\mathbb{C}$.
\end{enumerate}
\end{lemma}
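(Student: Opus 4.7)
The plan is to derive all five estimates from Lemmas \ref{lm:boundV} and \ref{lm:boundNV} via the substitution $T=e^t$, $X=e^x$, by analyzing the behavior at the four corners $(T_0,X_0)\in\{1,\infty\}^2$ of the domain $(0,\infty)^2$. Under this substitution, the factor $|T/(\tilde{T_0}T-1)|$ behaves like $1/|t|$ as $t\to 0$ (when $T_0=1$) and like $e^{\Re t}$ as $\Re t\to+\infty$ (when $T_0=\infty$), while $1/|T|=e^{-\Re t}$; the analogous statements hold for $(X,x)$.

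For part (1), I would cover $(0,\infty)^2$ by small neighborhoods of the four corners together with a compact middle region. On each corner $(T_0,X_0)$ with $gT_0=X_0\in V(g)$, Lemma \ref{lm:boundV}(1) applies; I choose the weight $q$ so that $(1-q,q)=(\mu_{T_0},\nu_{X_0})$ in the non-cusp case and $(2(1-q),2q)=(\mu_{T_0},\nu_{X_0})$ in the cusp case, which is possible thanks to Definition \ref{def-nu-mu}. At corners that are not vertex pairs, at least one of $\mu_{T_0},\nu_{X_0}$ vanishes and Lemma \ref{lm:boundNV} already yields the bound $M/|T|$. Translating back to $(t,x)$ produces the local factors appearing in \eqref{eq:est1}. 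The auxiliary polynomial prefactors $(t+1)^{\mu_1}(x+1)^{\nu_1}$ are included precisely to dominate the four local estimates uniformly and to absorb the bounded middle region into a single global bound.

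Parts (2) and (3) are direct consequences of Lemma \ref{lm:boundNV}. In (2), once one observes that $t\in W_{0,\epsilon}$ implies $e^t\in W_{1,\epsilon'}$ for a suitable $\epsilon'$, and that the exceptional set $Z$ in $(t,x)$-space is the image of a neighborhood $N$ of the vertex pre-images in $(T,X)$-space, Lemma \ref{lm:boundNV} delivers $M/|e^t|=Me^{-\Re t}$. For (3), the hypothesis $g1\neq 1$ excludes the vertex $(T_0,X_0)=(1,1)$, and the restriction $(t,x)\in W_{0,\epsilon,R}^2$ forces $T$ and $X$ into a bounded region, so the other potentially troublesome vertices (involving $T_0=\infty$ or $X_0=\infty$) are automatically avoided; Lemma \ref{lm:boundNV} then gives the uniform bound $M/|T|\leq M$.

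Parts (4) and (5) are direct translations of Lemma \ref{lm:boundV}(2). For (4), the hypothesis $g1=\infty$ makes $(T_0,X_0)=(1,\infty)$ a vertex pair; substituting $\tilde{T_0}=1$, $\tilde{X_0}=0$ and $T=e^t$, $X=e^x$ into \eqref{eq:dist0} gives $|1-e^{-t}|>\epsilon' e^{-x}$, and the comparability $|1-e^{-t}|\asymp|t|$ for $t$ near $0$ yields \eqref{eq:dist1}. Part (5) is the symmetric statement at the vertex $(\infty,1)$, using the opposite inequality of \eqref{eq:dist0} together with the comparability $|X-1|/|X|\asymp|x|$ for $x$ near $0$. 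The principal obstacle is the bookkeeping in part (1): one must carefully choose the weight $q$ at each vertex so as to match the prescribed $(\mu_{T_0},\nu_{X_0})$ and then assemble the four local estimates into a single uniform bound, with the polynomial corrections $(t+1)^{\mu_1}(x+1)^{\nu_1}$ handling the transition between regimes. The remaining parts reduce to routine coordinate translations of the preceding lemmas.
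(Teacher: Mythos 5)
Your proposal is correct and follows essentially the same route as the paper: Lemma \ref{lm:boundV}(1) near the vertex pairs with $q$ chosen to match $(\mu_{T_0},\nu_{X_0})$, Lemma \ref{lm:boundNV} elsewhere, the observation that every factor in the right-hand side of \eqref{eq:est1} is at least $1$ to merge the local bounds, and \eqref{eq:dist0} specialized at the vertices $(1,\infty)$ and $(\infty,1)$ for parts (4) and (5). The only slip is the incidental remark that at a non-vertex corner at least one of $\mu_{T_0},\nu_{X_0}$ vanishes (false, e.g., when $g1=\infty$ and $g\infty=1$ at the corner $(1,1)$), but your argument there relies only on the Lemma \ref{lm:boundNV} bound $M/|T|$, so nothing is affected.
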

\begin{proof}
Let $Z$ be a neighborhood of $\{(\log T_0,\log X_0)~|~X_0\in V(g),T_0=g^{-1}X_0\}$ in $\mathbb{C}^2$.
If $V(g)\neq\emptyset$, then for each $X_0\in V(g)$,
consider a sufficiently small neighborhood $N'(X_0)$ of $(T_0,X_0)$ in $(1,+\infty)^2$ such that $J'(X_0)=\{(\log T,\log X)~|~(T,X)\in N'(X_0)\}\subset Z$.
By Lemma \ref{lm:boundV}, 
there exists $M>0$ such that
\begin{equation}
  \Bigl|\frac{1}{j_D(g,e^t)}\frac{1}{1-(ge^t)e^{-x}}\Bigr|\leq 
  Me^{-t}
  \Bigl|\dfrac{e^t}{\Tilde{T_0}e^t-1}\Bigr|^{\mu_{T_0}}
\Bigl|\dfrac{e^x}{\Tilde{X_0}e^x-1}\Bigr|^{\nu_{X_0}}
\end{equation}
for all $(t,x)\in J'(X_0)$. 

Let $N$ be a sufficiently small neighborhood of
$\{(T_0,X_0)~|~X_0\in V(g),T_0=g^{-1}X_0\}$ in $\widehat{\mathbb{C}}^2$
such that $N\cap(1,+\infty)^2$ is contained in
the union of the neighborhoods $N'(X_0)$ 
taken in the previous paragraph for each $X_0\in V(g)$.
By Lemma \ref{lm:boundNV}, 
there exist $\epsilon>0$ and $M>0$ such that 
\begin{equation}
\label{eq:boundW}
  \Bigl|\frac{1}{j_D(g,e^t)}\frac{1}{1-(ge^t)e^{-x}}\Bigr|\leq 
  Me^{-\Re t}
\end{equation}
for all $(t,x)\in J$,
where
 $I=W_{1,\epsilon}^2\setminus N$ 
and $J=\{(\log T,\log X)~|~(T,X)\in I\}$.
 Let $\epsilon'>0$ be sufficiently small such that
$e^z\in W_{1,\epsilon}$ for all $z\in W_{0,\epsilon'}$.
This implies \eqref{eq:est3}.
In particular,
if $g1\neq1$, then $W_{0,\epsilon',R}^2\subset J$ for any sufficiently large $R>0$. Thus
\eqref{eq:boundW} implies \eqref{eq:est4}.

Since %for some $M'>0$ and 
for all $z> 0$,
\begin{equation}
  \begin{split}
    1\leq \Bigl|\frac{e^z}{\Tilde{Z_0}e^z-1}\Bigr|
    &\leq 
    \begin{cases}
   %   \dfrac{M'}{z}\leq 
       \dfrac{z+1}{z}
      \qquad&(Z_0=1),\\    
      e^z\qquad&(Z_0=\infty),
    \end{cases}
    % \\
    % &\leq M'z^{-\delta_{1,Z_0}}e^{\delta_{\infty,Z_0}z}
  \end{split}
\end{equation}
% and
% \begin{equation}
% \dfrac{z+1}{z},e^z\geq 1,
% \end{equation}
we have
\begin{align}
1,\  \Bigl|\dfrac{e^t}{\Tilde{T_0}e^t-1}\Bigr|^{\mu_{T_0}}&\leq
t^{-\mu_1}e^{\mu_\infty t}(t+1)^{\mu_1},\\
1,\  \Bigl|\dfrac{e^x}{\Tilde{X_0}e^x-1}\Bigr|^{\nu_{X_0}}&\leq
x^{-\nu_1}e^{\nu_\infty x}(x+1)^{\nu_1}
\end{align}
and hence for all $(t,x)\in(0,+\infty)^2$,
\begin{equation}
  \begin{split}
    \Bigl|\frac{1}{j_D(g,e^t)}
  \frac{1}{1-(ge^t) e^{-x}}\Bigr|
  &\leq
  M'
 t^{-\mu_1}x^{-\nu_1}e^{(\mu_\infty-1) t}
e^{\nu_\infty x}(t+1)^{\mu_1}(x+1)^{\nu_1}%.
\end{split}
\end{equation}
for some $M'>0$, which implies \eqref{eq:est1}.

% If $g1\notin\{1,\infty\}$, then $\mu_1=0$ and $(W_{1,\epsilon}\cap \{z~|~\Re z<R\})\times (1,+\infty)\subset I$ for some sufficiently large $R>0$. Thus
% for all $(t,x)\in W_{0,\epsilon'}\times (0,+\infty)$,
% \begin{equation}
%   \Bigl|\frac{1}{j_D(g,e^t)}
%   \frac{1}{1-(ge^t) e^{-x}}\Bigr|
%   \leq
%   M'
%   x^{-\nu_1}e^{(\mu_\infty-1) \Re t}e^{\nu_\infty x}(x+1)^{\nu_1}
% \end{equation}
% for some $M'>0$, which implies \eqref{eq:est2}.

% Similar argument
%   works well in the other cases.

%If $g1\neq1$, then $(W_{1,\epsilon}\cap \{z\in\mathbb{C}~|~\Re z<R\})^2\subset I$ for a sufficiently large $R>0$. Thus
Inequalities
\eqref{eq:dist1} and
\eqref{eq:dist2}
follow from 
\eqref{eq:dist0}.
% for all $(t,x)\in (W_{0,\epsilon'}\cap \{z~|~\Re z<R'\})^2$,
% \begin{equation}
%   \Bigl|\frac{1}{j_D(g,e^t)}
%   \frac{1}{1-(ge^t) e^{-x}}\Bigr|
%   \leq
%   M' e^{(\mu_\infty-1) \Re t}e^{\nu_\infty x}
% \end{equation}
% for some $M',R'>0$, which implies \eqref{eq:est4}.
\end{proof}

\begin{theorem}
\label{thm:domain1}
For 
$\Re u>\nu_1,
\Re s>\mu_1,
\Re y>\nu_\infty,
\Re w>\mu_\infty-1$,
$\xi_D(u,s;y,w;g)$ is defined and analytic in $u,s,y,w$.
\end{theorem}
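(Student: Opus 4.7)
The plan is to deduce absolute convergence of the double integral in Lemma \ref{lm:DIR} from the pointwise estimate in Lemma \ref{lm:est}(i), and then upgrade convergence to joint analyticity by a standard dominated-convergence argument. All of the analytic heavy lifting is already contained in those two lemmas, so no new estimates will be required.

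First I would start from the double integral representation \eqref{eq:IRXD} and substitute the bound \eqref{eq:est1} into the absolute value of the integrand to obtain the majorant
\begin{equation*}
\left|\frac{t^{s-1}\,x^{u-1}\,e^{-wt}\,e^{-yx}}{j_D(g,e^t)\bigl(1-(ge^t)e^{-x}\bigr)}\right|
\leq M\, t^{\Re s - 1 - \mu_1}(t+1)^{\mu_1}\, x^{\Re u - 1 - \nu_1}(x+1)^{\nu_1}\, e^{-(\Re w - \mu_\infty + 1)t}\, e^{-(\Re y - \nu_\infty)x}.
\end{equation*}
Then I would check the four marginal behaviors of this majorant on $(0,\infty)^2$: near $t=0$ the condition $\Re s > \mu_1$ makes $t^{\Re s-1-\mu_1}$ integrable, and symmetrically near $x=0$ the condition $\Re u > \nu_1$ suffices; as $t\to\infty$ the polynomial factor $(t+1)^{\mu_1}$ is absorbed by the exponential provided $\Re w - \mu_\infty + 1 > 0$, i.e.\ $\Re w > \mu_\infty - 1$, and analogously at $x\to\infty$ we need $\Re y > \nu_\infty$. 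Under the four hypotheses in the theorem, the majorant is Lebesgue-integrable on $(0,\infty)^2$, so Fubini together with Lemma \ref{lm:DIR} yields absolute convergence of \eqref{eq:IRXD}.

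For joint analyticity in $(u,s,y,w)$, I would observe that the integrand is entire in these variables for each fixed $(t,x)\in (0,\infty)^2$, while the majorant above depends only on the real parts. Hence on any compact subset $K$ of the domain $\{\Re u > \nu_1,\ \Re s > \mu_1,\ \Re y > \nu_\infty,\ \Re w > \mu_\infty - 1\}$, replacing each real part by its infimum on $K$ yields a uniform integrable majorant on $K$. By dominated convergence combined with Morera's theorem applied variable-by-variable (equivalently, by Hartogs' theorem), the double integral defines a holomorphic function on this domain, and multiplying by the entire prefactor $1/(\Gamma(s)\Gamma(u))$ preserves analyticity. The main obstacle is therefore not conceptual but combinatorial: the exponents $\mu_1,\mu_\infty,\nu_1,\nu_\infty$ of Definition \ref{def-nu-mu} were designed precisely so that each geometric configuration of the vertices in $V(g)$ contributes exactly to one of the four real-part inequalities, and the proof reduces to matching the four marginal convergence conditions to the four hypotheses one-to-one.
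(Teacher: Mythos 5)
Your proposal is correct and follows essentially the same route as the paper: the authors also insert the bound \eqref{eq:est1} of Lemma \ref{lm:est} into the double integral \eqref{eq:IRXD}, observe that the resulting majorant is integrable exactly under the four stated real-part conditions, and then invoke Morera and Fubini for joint analyticity. Your write-up merely spells out the four marginal convergence checks and the local uniformity of the majorant on compacta, which the paper leaves implicit.
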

\begin{proof}
By Lemma \ref{lm:est},
\begin{equation}
  \begin{split}
    &\int_0^\infty\int_0^\infty 
  \Bigl|\frac{t^{s-1}x^{u-1}e^{-wt}e^{-yx}}{j_D(g,e^t)}
  \frac{1}{1-(ge^t) e^{-x}}\Bigr|
  dt dx
  \\
&\qquad
  \leq M
  \int_0^\infty\int_0^\infty 
  t^{\Re s-1-\mu_1}x^{\Re u-1-\nu_1}e^{(\mu_\infty-1-\Re w) t}
\\
&\qquad\qquad
 \times 
  e^{(\nu_\infty-\Re y) x}(t+1)^{\mu_1}(x+1)^{\nu_1}
  dtdx<\infty.
\end{split}
\end{equation}
The analyticity in $u,s,y,w$ follows from the Morera theorem and the Fubini theorem.
\end{proof}

\begin{example}\label{Exam-3-1}
Let $g_{\eta}:=
\begin{pmatrix} -1 & 1 \\ 0 & 1
\end{pmatrix}$. %\in\GL_2(\mathbb{C})$, 
We can see that $g_\eta^{-1}=g_\eta$ and ${\rm det}\,g=-1$ which are important properties. 
For $g=g_\eta$, we have $gT=1-T$, namely, $g1=0$, $g\infty=\infty$ and 
\begin{equation*}
  g([1,+\infty])\cap[1,+\infty]=\{\infty\}= V(g).
\end{equation*}
Hence, by Definition \ref{def-nu-mu}, we obtain $\mu_1=0$ and $\nu_1=0$. Since $\infty$ is not a cusp, we have $\mu_\infty,\,\nu_\infty\in [0,1]$ satisfying $\mu_\infty+\nu_\infty=1$. Therefore $\xi_D(u,s;y,w;g_\eta)$ is defined for 
$  \Re u>0,
\Re s>0,
  \Re y>\nu_\infty,
  \Re w>\mu_\infty-1$,
 where $\mu_\infty,\,\nu_\infty\in [0,1]$ with $\mu_\infty+\nu_\infty=1$. 
We see that
$$\Li_u(g\,e^t)=\Li_u(1-e^t),\quad j_D(g,e^t)=1,\quad j_N(g,e^t)=1-e^t.$$
Hence, noting \eqref{Phi-Li} and \eqref{xi-D-N}, we define 
\begin{equation}
\eta(u;s)=\xi_N(u,s;0,0;g_\eta)=\xi_D(u,s;1,0;g_\eta),%\quad (k\in \mathbb{Z}_{\geq 1}) 
\label{eta-xi}
\end{equation}
which was already considered by Yamamoto \cite{Yamamoto}. 
\if0
We can also see that 
\begin{equation}
\xi_N(k,s;0,-1;g_\eta)=\xi_D(k,s;1,-1;g_\eta)=\widetilde{\xi}(k;s) \label{tildexi-xi}
\end{equation}
for $k\in \mathbb{Z}\setminus \{0\}$ and $s\in \mathbb{C}$, 
where $\widetilde{\xi}(k;s)$ was defined in \cite[Section 4]{Kaneko-Tsumura2015}. 
\fi

Let $g_{\xi}:=
\begin{pmatrix} 1 & -1 \\ 1 & 0
\end{pmatrix}$. Then ${\rm det}\,g_\xi=1$. %\in\GL_2(\mathbb{C})$, 
For $g=g_\xi$, we have $gT=1-T^{-1}$, namely, $g1=0$, $g\infty=1$ and 
\begin{equation*}
  g([1,+\infty])\cap[1,+\infty]=\{1\}= V(g).
\end{equation*}
Hence we obtain $\mu_1=0$ and $\nu_{\infty}=0$. Since $1$ is not a cusp, we have $\mu_\infty,\,\nu_1\in [0,1]$ satisfying $\mu_\infty+\nu_1=1$. Therefore $\xi_D(u,s;y,w;g_\xi)$ is defined for 
$  \Re u>\nu_1,
\Re s>0,
  \Re y>0,
  \Re w>\mu_\infty-1$,
 where $\mu_\infty,\,\nu_1\in [0,1]$ with $\mu_\infty+\nu_1=1$. 
We have 
$$\Li_u(g\,e^t)=\Li_u(1-e^{-t}),\quad j_D(g,e^t)=e^t,\quad j_N(g,e^t)=e^t-1.$$
Hence, noting \eqref{Phi-Li} and \eqref{xi-D-N}, we define 
\begin{equation}
\xi(u;s;w)=\xi_N(u,s;0,w-1;g_\xi)=\xi_D(u,s;1,w-1;g_\xi)
\end{equation}
and, in particular, 
\begin{equation}
\xi(u;s)=\xi_N(u,s;0,0;g_\xi)=\xi_D(u,s;1,0;g_\xi), \label{xi-def-ND}
\end{equation}
which is a generalization of \eqref{1-9}. 
\if0
In Section \ref{sec-5}, we will see that \eqref{eta-xi} can be analytically continued to $(u,s)\in \mathbb{C}^2$ (see Theorem \ref{thm:domain2} and Example \ref{Exam-5-eta-xi}). 

For $g=g_\xi^{-1}=
\begin{pmatrix} 0 & 1 \\ -1 & 1
\end{pmatrix}$, 
we have $gT=1/(1-T)$, namely, $g1=\infty$, $g\infty=0$ and 
\begin{equation*}
  g([1,+\infty])\cap[1,+\infty]=\{\infty \}= V(g).
\end{equation*}
Hence we obtain $\mu_\infty=0$ and $\nu_{1}=0$. Since $\infty$ is not a cusp, we have $\mu_1,\,\nu_\infty\in [0,1]$ satisfying $\mu_1+\nu_\infty=1$. Therefore $\xi_D(u,s;y,w;g_\xi^{-1})$ is defined for 
$\Re s>\mu_1,
  \Re u>0,
  \Re w>-1,
  \Re y>\nu_\infty$,
 where $\mu_1,\,\nu_\infty\in [0,1]$ with $\mu_1+\nu_\infty=1$. 
\fi
\end{example}

%%%%%%%%%%%%%%%%%%%%%%%%%%%%%%%%%%%%%%%%%%%%%%%%%%%%%%%%%%%%%%
\section{Relations among Arakawa--Kaneko zeta-functions}\label{sec-4}
%%%%%%%%%%%%%%%%%%%%%%%%%%%%%%%%%%%%%%%%%%%%%%%%%%%%%%%%%%%%%%

In this section, we give two types of functional relation formulas for $\xi_D$ and $\xi_N$ (see Theorems \ref{thm:trans1} and \ref{thm:trans2}). 
We will see that
these give functional relations which interpolate the well-known relations among poly-Bernoulli numbers in Section \ref{sec-6}.

For 
$g=
\begin{pmatrix}
  a & b \\ c & d
\end{pmatrix}$,
we put
$j_N(g,D_u)=aD_u+b$ for the difference operator $D_u$ and so on.
%then the above theorem reads
\begin{theorem}[Difference relations]
\label{thm:trans1}
For 
$g=
\begin{pmatrix}
  a & b \\ c & d
\end{pmatrix}$, %\in\GL_2(\mathbb{C})$, 
we have
  \begin{equation}\label{eq:rem-4-2}
    j_N(g,D_w^{-1})\xi_{N}(u,s;y,w;g)=
    j_D(g,D_w^{-1})\xi_{D}(u,s;y,w;g)-y^{-u}w^{-s},
  \end{equation}
namely,
  \begin{equation}
    \begin{split}
    &a\xi_{N}(u,s;y,w-1;g)+b\xi_{N}(u,s;y,w;g)\\
    \bigl(=&a\xi_{D}(u,s;y+1,w-1;g)+b\xi_{D}(u,s;y+1,w;g)\bigr)\\
    &\qquad\qquad=c\xi_{D}(u,s;y,w-1;g)+d\xi_{D}(u,s;y,w;g)-y^{-u}w^{-s}.
  \end{split}
\end{equation}
\end{theorem}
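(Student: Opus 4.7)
The plan is to pass to the integral representations of $\xi_D$ and $\xi_N$ and observe that the difference operator $D_w^{-1}$ acts inside the integrand by multiplication by $e^t$ (since shifting $w\mapsto w-1$ adds $t$ to the exponential), which will cancel the denominators $j_D(g,e^t)$ and $j_N(g,e^t)$ and reduce everything to elementary Gamma integrals.

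More precisely, I would first use Definition \ref{Def-3-1} and Lemma \ref{lm:DN} to write
\begin{align}
  \xi_D(u,s;y,w;g)&=\frac{1}{\Gamma(s)}\int_0^\infty t^{s-1}e^{-wt}\frac{\Phi(ge^t,u,y)}{j_D(g,e^t)}dt,\\
  \xi_N(u,s;y,w;g)&=\frac{1}{\Gamma(s)}\int_0^\infty t^{s-1}e^{-wt}\frac{\Phi(ge^t,u,y)-y^{-u}}{j_N(g,e^t)}dt.
\end{align}
Since $D_w^{-1}$ sends $w\mapsto w-1$ and acts on $e^{-wt}$ as multiplication by $e^t$, applying $j_N(g,D_w^{-1})=aD_w^{-1}+b$ to $\xi_N$ brings a factor $ae^t+b=j_N(g,e^t)$ inside the integral, cancelling the denominator. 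Similarly $j_D(g,D_w^{-1})$ applied to $\xi_D$ brings a factor $j_D(g,e^t)$ inside and cancels that denominator.

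This yields
\begin{equation}
  j_D(g,D_w^{-1})\xi_D(u,s;y,w;g)-j_N(g,D_w^{-1})\xi_N(u,s;y,w;g)
  =\frac{1}{\Gamma(s)}\int_0^\infty t^{s-1}e^{-wt}\,y^{-u}\,dt,
\end{equation}
and the remaining integral is $y^{-u}w^{-s}$ by the standard Gamma integral (for $\Re s,\Re w>0$). Rearranging gives the desired identity, and the equality of the two expressions in parentheses in the second displayed form of the theorem is simply the definition $\xi_N(u,s;y,w;g)=\xi_D(u,s;y+1,w;g)$.

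The only delicate point is justifying that the operator acts on $\xi_D,\xi_N$ in the way described, i.e.\ that we may evaluate $j_N(g,D_w^{-1})\xi_N$ by shifting $w\mapsto w-1$ inside the integrand rather than acting on the already-integrated function. This requires checking that the integrals defining $\xi_D(u,s;y,w-1;g)$ and $\xi_N(u,s;y,w-1;g)$ still converge, so that the identity is valid in a common domain; by Theorem \ref{thm:domain1} this holds provided $\Re w>\mu_\infty$, after which the identity between holomorphic functions can be continued by standard analyticity arguments. This bookkeeping is straightforward once the convergence bounds of Lemma \ref{lm:est} are invoked, so I do not expect any real obstacle beyond verifying this domain condition.
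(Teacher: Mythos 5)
Your argument is correct and is essentially identical to the paper's own proof: both pass to the integral representations \eqref{eq:IR1} and \eqref{eq:IR2}, use that $D_w^{-1}$ acts as multiplication by $e^t$ inside the integrand so that $j_N(g,D_w^{-1})$ and $j_D(g,D_w^{-1})$ cancel the respective denominators, and evaluate the leftover term via $\frac{1}{\Gamma(s)}\int_0^\infty t^{s-1}e^{-wt}y^{-u}\,dt=y^{-u}w^{-s}$. Your closing remark on the domain of validity is a reasonable piece of bookkeeping that the paper leaves implicit.
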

\begin{proof}
The assertion follows
from the integral representations \eqref{eq:IR1} and \eqref{eq:IR2} with
  \begin{equation}
\frac{aT+b}{j_N(g,T)}-\frac{cT+d}{j_D(g,T)}
=1-1=0
  \end{equation}
and
\begin{equation}
\frac{1}{\Gamma(s)}\int_0^{\infty} t^{s-1}e^{-wt}y^{-u}dt
=y^{-u}w^{-s}.
\end{equation}
\end{proof}
% \begin{remark}
% \end{remark}

For $g\in\GL_2(\mathbb{C})$ and 
indeterminates $X,T$, we define
\begin{align}
  F_D&=\frac{1}{1-(gT)X^{-1}},&
  G_D&=\frac{1}{1-(g^{-1}X)T^{-1}},
\\
  F_N&=\frac{(gT)X^{-1}}{1-(gT)X^{-1}},&
  G_N&=\frac{(g^{-1}X)T^{-1}}{1-(g^{-1}X)T^{-1}}.
\end{align}
From Lemma \ref{lm:DIR}, we see that these come from the integrands of the double integral representations. We have the key relations, which are the core of duality relations.
\begin{lemma}
\label{lm:FG}
\begin{align}
  \frac{T}{j_D(g,T)}F_D&=-\frac{1}{\det g}\frac{X}{j_D(g^{-1},X)}G_D,\\
  \frac{1}{j_N(g,T)}F_N&=-\frac{1}{\det g}\frac{1}{j_N(g^{-1},X)}G_N,\\
  \frac{1}{j_D(g,T)}F_D&=-\frac{1}{\det g}\frac{X}{j_N(g^{-1},X)}G_N.
\end{align}
\end{lemma}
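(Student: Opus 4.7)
The plan is to verify each of the three identities by direct calculation, reducing both sides to a common rational expression in $T$ and $X$. Writing $g = \begin{pmatrix} a & b \\ c & d \end{pmatrix}$ and $g^{-1} = \frac{1}{\det g}\begin{pmatrix} d & -b \\ -c & a \end{pmatrix}$, the factors of automorphy for the inverse are
\begin{equation}
j_D(g^{-1},X) = \frac{a-cX}{\det g},\qquad j_N(g^{-1},X) = \frac{dX-b}{\det g},
\end{equation}
while $g^{-1}X = (dX-b)/(a-cX)$. First I would compute explicitly
\begin{equation}
F_D = \frac{(cT+d)X}{(cT+d)X - (aT+b)},\qquad F_N = \frac{aT+b}{(cT+d)X-(aT+b)},
\end{equation}
and, observing that $(a-cX)T-(dX-b) = -\bigl[(cT+d)X-(aT+b)\bigr]$, similarly
\begin{equation}
G_D = -\frac{(a-cX)T}{(cT+d)X-(aT+b)},\qquad G_N = -\frac{dX-b}{(cT+d)X-(aT+b)}.
\end{equation}

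Once these four expressions are in hand, each identity becomes routine. For the first, both $\frac{T}{j_D(g,T)}F_D$ and $-\frac{1}{\det g}\cdot\frac{X}{j_D(g^{-1},X)}G_D$ collapse to $\frac{TX}{(cT+d)X-(aT+b)}$, since the $(cT+d)$ factors cancel on the left and the $(a-cX)$ factors together with the two sign flips and the two compensating $\det g$ factors cancel on the right. For the second, both $\frac{1}{j_N(g,T)}F_N$ and $-\frac{1}{\det g}\cdot\frac{1}{j_N(g^{-1},X)}G_N$ collapse to $\frac{1}{(cT+d)X-(aT+b)}$; here the $(aT+b)$ and $(dX-b)$ factors cancel in numerator and denominator respectively, and again the $\det g$ prefactors cancel. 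The third identity reduces analogously to $\frac{X}{(cT+d)X-(aT+b)}$ on both sides.

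The key structural remark, which is the only thing worth stating explicitly, is the denominator identity
\begin{equation}
(cT+d)X - (aT+b) = -\bigl[(a-cX)T-(dX-b)\bigr],
\end{equation}
which is what produces the overall sign $-1/\det g$ on the right-hand side after the $\det g$'s coming from $j_D(g^{-1},\cdot)$ and $j_N(g^{-1},\cdot)$ are accounted for. There is no real obstacle; the only thing to watch is that $j_D(g^{-1},X)$ and $j_N(g^{-1},X)$ carry an extra factor of $1/\det g$ from the cofactor formula for $g^{-1}$, and that the swap $T\leftrightarrow X$ together with $g\leftrightarrow g^{-1}$ flips the sign of the common denominator exactly once.
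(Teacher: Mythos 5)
Your proposal is correct and is essentially the paper's own argument: a direct rational-function computation resting on the denominator identity $(cT+d)X-(aT+b)=-\bigl[(a-cX)T-(dX-b)\bigr]$ and on the fact that $j_D(g^{-1},X)$ and $j_N(g^{-1},X)$ carry the extra $1/\det g$ from the cofactor formula. The only (cosmetic) difference is organizational — the paper proves the first identity by manipulating $F_D$ and then derives the third and second from it, whereas you compute $F_D,F_N,G_D,G_N$ explicitly over the common denominator and check all three independently.
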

\begin{proof}  
The first equation follows from
\begin{equation}
  \begin{split}
    F_D
    &=\frac{1}{1-(gT)X^{-1}}
    \\
    &=\frac{j_D(g,T) X}{(cX-a)T-(-dX+b)}
    \\
    &=-\frac{j_D(g,T) T^{-1}}{1-(g^{-1}X)T^{-1}}\frac{X}{-cX+a}
    \\
    &=-\frac{1}{\det g}\frac{j_D(g,T) T^{-1}}{1-(g^{-1}X)T^{-1}}
    \frac{X}{j_D(g^{-1},X)}
    \\
    &=-\frac{1}{\det g}\frac{j_D(g,T) T^{-1}X}{j_D(g^{-1},X)}G_D,
  \end{split}
\end{equation}
and the third, from 
\begin{equation}
  \begin{split}
    F_D&=
    -\frac{1}{\det g}\frac{j_D(g,T) T^{-1}X}{j_D(g^{-1},X)}
    \frac{j_D(g^{-1},X)}{j_N(g^{-1},X)T^{-1}}G_N
    \\
    &=
    -\frac{1}{\det g}\frac{j_D(g,T) X}{j_N(g^{-1},X)}G_N,
  \end{split}
\end{equation}
and finally the second, from
\begin{equation}
  \begin{split}
    F_N&=\frac{j_N(g,T)X^{-1}}{j_D(g,T)}F_D
    \\
    &=-\frac{1}{\det g}\frac{j_N(g,T)X^{-1}}{j_D(g,T)}\frac{j_D(g,T) X}{j_N(g^{-1},X)}G_N
    \\
    &=-\frac{1}{\det g}\frac{j_N(g,T)}{j_N(g^{-1},X)}G_N.
  \end{split}
\end{equation}
\end{proof}

There are three types of duality relations, namely,
ascending-ascending, descending-descending, and ascending-descending
types.

\begin{theorem}[Duality relations]
\label{thm:trans2}
For $n\in \mathbb{Z}_{\geq 0}$, 
\begin{multline}\label{dual-1}
  j_D(g^{-1},D_y^{-1})^{n}D_w^{-n-1}\Bigl(\prod_{k=1}^n(D_u^{-1}-y+k)\Bigr)
  \xi_{D}(u,s;y,w;g)\\
  \begin{aligned}
    &=\Bigl(\frac{-1}{\det g}\Bigr)^{n+1}
    j_D(g,D_w^{-1})^{n}D_y^{-n-1}\Bigl(\prod_{k=1}^n(D_s^{-1}-w+k)\Bigr)
    \xi_{D}(s,u;w,y;g^{-1}),
  \end{aligned}
\end{multline}
\begin{multline}\label{dual-2}
  j_N(g^{-1},D_y^{-1})^{n}\Bigl(\prod_{k=1}^n(D_u^{-1}-y-k)\Bigr)
  \xi_{N}(u,s;y,w;g)\\
  \begin{aligned}
    &=\Bigl(\frac{-1}{\det g}\Bigr)^{n+1}
    j_N(g,D_w^{-1})^{n}\Bigl(\prod_{k=1}^n(D_s^{-1}-w-k)\Bigr)
  \xi_{N}(s,u;w,y;g^{-1})
\end{aligned}
\end{multline}
and
\begin{multline}\label{dual-3}
  j_N(g^{-1},D_y^{-1})^{n}\Bigl(\prod_{k=1}^n(D_u^{-1}-y+k)\Bigr)
  \xi_{D}(u,s;y,w;g)\\
  \begin{aligned}
    &=\Bigl(\frac{-1}{\det g}\Bigr)^{n+1}
    j_D(g,D_w^{-1})^{n}D_y^{-n-1}\Bigl(\prod_{k=1}^n(D_s^{-1}-w-k)\Bigr)
    \xi_{N}(s,u;w,y;g^{-1}).
  \end{aligned}
\end{multline}
\end{theorem}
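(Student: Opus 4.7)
The proof proceeds by applying the operators on both sides to the double integral representations of $\xi_D$ and $\xi_N$ from Lemma \ref{lm:DIR}, reducing each identity to an integrand-level statement via Lemmas \ref{lm:DS} and \ref{lm:FG}.

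For \eqref{dual-1}, set $T=e^t$ and $X=e^x$. Applying $\prod_{k=1}^n(D_u^{-1}-y+k)$ under the $t$-integral and invoking the first equation of Lemma \ref{lm:DS} converts the integrand of $\xi_D(u,s;y,w;g)$ into a constant multiple of $t^{s-1}x^{u-1}e^{-wt}e^{-yx} F_D^{n+1}/j_D(g,T)$. The remaining operators act as multiplicative shifts on the exponentials: $D_w^{-n-1}$ brings out the factor $T^{n+1}$ from $e^{-wt}$, and $j_D(g^{-1},D_y^{-1})^n$ brings out $j_D(g^{-1},X)^n$ from $e^{-yx}$, using $D_y^{-1}e^{-yx}=e^xe^{-yx}$. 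Raising the first identity of Lemma \ref{lm:FG} to the $(n+1)$st power gives
\begin{equation*}
\frac{T^{n+1}}{j_D(g,T)^{n+1}}F_D^{n+1}=\Bigl(\frac{-1}{\det g}\Bigr)^{n+1}\frac{X^{n+1}}{j_D(g^{-1},X)^{n+1}}G_D^{n+1},
\end{equation*}
which lets us rewrite the LHS integrand as $(-1/\det g)^{n+1}$ times $t^{s-1}x^{u-1}e^{-wt}e^{-yx}\,j_D(g,T)^n X^{n+1}G_D^{n+1}/j_D(g^{-1},X)$. Running the symmetric procedure on the RHS --- starting from $\xi_D(s,u;w,y;g^{-1})$ and successively applying $\prod_{k=1}^n(D_s^{-1}-w+k)$, $D_y^{-n-1}$, and $j_D(g,D_w^{-1})^n$ --- produces precisely the same integrand.

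The proofs of \eqref{dual-2} and \eqref{dual-3} follow the same template with the appropriate choices from Lemmas \ref{lm:DS} and \ref{lm:FG}. For \eqref{dual-2}, the second equation of Lemma \ref{lm:DS} (combined with the $\xi_N$-representation given by Lemma \ref{lm:DN}) produces $F_N^{n+1}$ and $G_N^{n+1}$, and the second identity of Lemma \ref{lm:FG} provides the swap. The hybrid relation \eqref{dual-3} employs the first equation of Lemma \ref{lm:DS} on the $\xi_D$ side (producing $F_D^{n+1}$) and the second equation on the $\xi_N$ side (producing $G_N^{n+1}$), bridged by the third identity of Lemma \ref{lm:FG}.

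The main obstacle is the careful bookkeeping of the factors $T$, $X$, $j_D(g,T)$, $j_D(g^{-1},X)$, $j_N(g,T)$, $j_N(g^{-1},X)$ and their various powers, which arise both from the operator action on the exponentials and from the $(n+1)$st power of the Lemma \ref{lm:FG} identities; each step is immediate, but all the powers must cancel exactly for the two sides to match. Absolute convergence of the double integrals --- which justifies bringing the operators inside via Fubini and dominated convergence --- is guaranteed on a subdomain with $\Re u,\Re s,\Re y,\Re w$ sufficiently large by Theorem \ref{thm:domain1}, and the identities then extend to the common domain by analytic continuation.
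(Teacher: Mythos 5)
Your proposal is correct and follows essentially the same route as the paper: apply the product of shift operators to produce the $(n+1)$st powers $F_D^{n+1}$, $F_N^{n+1}$ in the double integrals via Lemmas \ref{lm:DS} and \ref{lm:DN}, pull out the factors $T^{n+1}$, $X^{n+1}$, $j_D(g^{-1},X)^n$, etc.\ from the exponentials by the remaining difference operators, and match integrands using the $(n+1)$st powers of the identities in Lemma \ref{lm:FG}. Your added remark on justifying the interchange of operators and integrals on a common subdomain and extending by analytic continuation is a point the paper leaves implicit.
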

\begin{proof}
From
Lemmas \ref{lm:rd}, \ref{lm:DS} and \ref{lm:DN}, we have
  \begin{multline}
    \frac{1}{n!}\Bigl(\prod_{k=1}^n(D_u^{-1}-y+k)\Bigr)
    \xi_{D}(u,s;y,w;g)
\\
\begin{aligned}
  &=
\frac{1}{\Gamma(s)\Gamma(u)}
    \int_0^{\infty}\int_0^{\infty}
    \frac{t^{s-1}x^{u-1}e^{-wt}e^{-yx}}{j_D(g,e^t)}
    \frac{1}{(1-(ge^t) e^{-x})^{n+1}}
    dtdx,
  \end{aligned}
\end{multline}
and
\begin{multline}
  \frac{1}{n!}\Bigl(\prod_{k=1}^n(D_u^{-1}-y-k)\Bigr)
    \xi_N(u,s;y,w;g)
    \\
    \begin{aligned}
      &=\frac{1}{\Gamma(s)\Gamma(u)}
      \int_0^{\infty}\int_0^{\infty}
      \frac{t^{s-1}x^{u-1}e^{-wt}e^{-yx}}{j_N(g,e^t)}
      \Bigl(\frac{(ge^t) e^{-x}}{1-(ge^t) e^{-x}}\Bigr)^{n+1}
      dtdx.
    \end{aligned}
  \end{multline}
Lemma \ref{lm:FG} implies
\begin{align}
  j_D(g^{-1},X)^{n}\frac{T^{n+1}}{j_D(g,T)}F_D^{n+1}&=
\Bigl(\frac{-1}{\det g}\Bigr)^{n+1}
j_D(g,T)^{n}\frac{X^{n+1}}{j_D(g^{-1},X)}G_D^{n+1},\\
j_N(g^{-1},X)^{n}\frac{1}{j_N(g,T)}F_N^{n+1}&=
\Bigl(\frac{-1}{\det g}\Bigr)^{n+1}
j_N(g,T)^{n}\frac{1}{j_N(g^{-1},X)}G_N^{n+1},\\
j_N(g^{-1},X)^{n}  \frac{1}{j_D(g,T)}F_D^{n+1}&=
\Bigl(\frac{-1}{\det g}\Bigr)^{n+1}
j_D(g,T)^{n}\frac{X^{n+1}}{j_N(g^{-1},X)}G_N^{n+1}.
\end{align}
By noting that 
\begin{equation}
  j_D(g^{-1},D_y^{-1})^{n}D_w^{-n-1}(t^{s-1}x^{u-1}e^{-wt}e^{-yx})
= j_D(g^{-1},e^x)^{n}e^{(n+1)t}(t^{s-1}x^{u-1}e^{-wt}e^{-yx})
\end{equation}
and so on,
we obtain the result.
\end{proof}

The $n=0$ case reduces to the following.
\begin{corollary}
\label{thm:trans3}
  \begin{align}
    \label{eq:Tr2}
    \xi_{D}(u,s;y,w-1;g)&=-\frac{1}{\det g}\xi_{D}(s,u;w,y-1;g^{-1}),\\
    \label{eq:Tr1}
    \xi_{N}(u,s;y,w;g)&=-\frac{1}{\det g}\xi_{N}(s,u;w,y;g^{-1}),\\
    \label{eq:Tr3}
    \xi_{D}(u,s;y,w;g)&=-\frac{1}{\det g}\xi_{N}(s,u;w,y-1;g^{-1}),
  \end{align}
which are essentially the same formulas.
\end{corollary}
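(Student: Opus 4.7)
The plan is to observe that Corollary \ref{thm:trans3} is simply the specialization of Theorem \ref{thm:trans2} to the case $n=0$. Substituting $n=0$ into each of \eqref{dual-1}, \eqref{dual-2}, \eqref{dual-3}, every empty product $\prod_{k=1}^{0}(\cdots)$ equals $1$, and every power $j_D(g,\cdot)^{0}$, $j_N(g,\cdot)^{0}$, $j_D(g^{-1},\cdot)^{0}$, $j_N(g^{-1},\cdot)^{0}$ equals $1$. Recalling that $D_w^{-1}$ acts as the shift $w\mapsto w-1$ (and likewise for $D_y^{-1}$), I would read off that \eqref{dual-1} reduces to $\xi_D(u,s;y,w-1;g) = -(\det g)^{-1}\xi_D(s,u;w,y-1;g^{-1})$, which is \eqref{eq:Tr2}; that \eqref{dual-2} has no surviving operator shifts and gives \eqref{eq:Tr1} directly; and that \eqref{dual-3} retains a single $D_y^{-1}$ on the right-hand side, producing \eqref{eq:Tr3}.

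Next, to justify the closing remark that the three identities are essentially the same, I would invoke the definitional relation \eqref{xi-D-N}, namely $\xi_N(u,s;y,w;g) = \xi_D(u,s;y+1,w;g)$, which translates between the $D$ and $N$ variants by shifting the third argument. Applying this substitution to both sides of \eqref{eq:Tr3} converts each $\xi_D$ to a $\xi_N$ and each $\xi_N$ to a $\xi_D$, and after the shift $y\mapsto y+1$ the resulting identity is exactly \eqref{eq:Tr1}. Applying the same substitution to \eqref{eq:Tr2} likewise yields \eqref{eq:Tr1} after the joint shift $y\mapsto y+1$, $w\mapsto w+1$. Hence the three formulas are pairwise equivalent.

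There is essentially no obstacle: all substantive work has already been carried out in establishing Theorem \ref{thm:trans2}, and the corollary amounts to pure bookkeeping. The only points requiring care are (i) correctly interpreting the action of the inverse difference operators as argument shifts, and (ii) verifying that the conversions between $\xi_D$ and $\xi_N$ are consistent with the $y$- and $w$-shifts appearing on the two sides of each identity.
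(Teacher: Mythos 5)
Your proposal is correct and is exactly the paper's (implicit) argument: the corollary is stated as the $n=0$ specialization of Theorem \ref{thm:trans2}, with the empty products and zeroth powers equal to $1$ and the single surviving shifts $D_w^{-1}$, $D_y^{-1}$ read off as $w\mapsto w-1$, $y\mapsto y-1$. Your verification of the mutual equivalence of the three identities via \eqref{xi-D-N} is also sound and matches the paper's closing remark.
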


\begin{example}\label{Exam-4-6}\ \ 
As for $\eta(u;s)=\xi_D(u,s;1,0;g_\eta)$ defined in Example \ref{Exam-3-1}, noting $g_\eta^{-1}=g_\eta$, we see that \eqref{eq:Tr2} with $(y,w)=(1,1)$ implies Yamamoto's result $\eta(u;s)=\eta(s;u)$ in \eqref{Yamamoto-F}, which interpolates \eqref{1-5} (for the values of $\eta(u;s)$ at nonpositive integers, see \eqref{eta-xi-vals}). 
We will further introduce several duality relations for $\xi_D(u,s;y,w;g)$ in Section \ref{sec-5} (see Examples \ref{Exam-5-6} and \ref{Exam-5-6-3}). 
\end{example}

\if0
Also, let $\widetilde{\xi}(u;s)=\xi_D(u,s;1,-1;g_\eta)$. Then, by \eqref{dual-1} with $(n,y,w,g)=(1,1,1,g_\eta)$, we obtain 
%$\xi_D(u-1,s;1,-1;g_\eta)=\xi_D(s-1,u;1,-1;g_\eta^{-1})$, 
%where we note $g_\eta^{-1}=g_\eta$. 
%Hence it follows from \eqref{tildexi-xi} that
\begin{equation}
\widetilde{\xi}(u-1,s)=\widetilde{\xi}(s-1,u) \label{dual-tilde-xi}
\end{equation}
which interpolates \eqref{1-5-2} (see Example \ref{Exam-6-1}). 
In addition, it follows from \eqref{eq:Tr3} with $(y,w)=(0,0)$ that
$$\xi_D(u,s;0,0;g_\eta)=\xi_N(s,u;0,-1;g_\eta)=\xi_D(s,u;1,-1;g_\eta).$$
Therefore we have
\begin{equation}
\xi_D(u,s;0,0;g_\eta)=\xi_D(u-1,s+1;1,-1;g_\eta)=\widetilde{\xi}(u-1,s+1). \label{eq:Tr-xi}
\end{equation}
\end{example}
\begin{example}\label{Exam-4-6-2}
Let $g=g_{\xi}$ in Example \ref{Exam-3-1} and $\xi(u;s):=\xi_D(u,s;1,0;g_\xi)$. 
%\begin{pmatrix} 1 & -1 \\ 1 & 0
%\end{pmatrix}$, %\in\GL_2(\mathbb{C})$. 
Then we have $g_{\xi}^{-1}=
\begin{pmatrix} 0 & 1 \\ -1 & 1
\end{pmatrix}.$ 
Hence $g_{\xi}^{-1}T=1/(1-T)$ which satisfies \eqref{eq:def_cond}. 
%Setting $(n,y,w,g)=(0,0,-1,g_\xi)$ in \eqref{dual-3}, we obtain 
Setting $(y,w,g)=(0,0,g_\xi)$ in \eqref{eq:Tr2}, we obtain 
\begin{equation*}
\xi_D(u,s;0,-1;g_\xi)=-\xi_D(s,u;0,-1;g_\xi^{-1}). %\label{xi-dual-01}
\end{equation*}
From this observation, we define
\begin{align}
\check{\xi}(u;s):&=\xi_D(u,s;0,-1;g_\xi^{-1}) =-\xi_D(s,u;0,-1;g_\xi). %=-\frac{1}{\Gamma(u)}\int_0^{\infty} t^{u-1}{\Li_s(1-e^{-t})}dt. 
\label{def-check-xi}
\end{align}
%Then \eqref{xi-dual-01} implies
%\begin{equation}
%\check{\xi}(u,s)=-\xi_D(s,u;0,-1;g_\xi^{-1}). \label{def-check-xi-2}
%\end{equation}
Setting $(n,y,w,g)=(1,1,0,g_\xi)$ in \eqref{dual-3} and noting \eqref{xi-D-N}, we obtain 
\begin{align*}
& \xi_{D}(u-1,s;1,0;g_\xi)=\xi_{D}(s-1,u;0,-1;g_\xi^{-1}).
\end{align*}
%Since $\xi_D(u,s;1,0;g_\xi)=\xi(u;s)$, 
Therefore we see from \eqref{def-check-xi} that
\begin{align}
& \xi(u-1;s)=\check{\xi}(s-1;u). \label{xi-dual-2}
\end{align}
The symbol $\check{\xi}$ is derived from this fact. 
\if0
On the other hand, setting $(y,w,g)=(0,0,g_\xi)$ in Theorem \ref{thm:trans1}, we have
\begin{equation*}
\xi_D(u,s;1,-1;g_\xi)-\xi_D(u,s;1,0;g_\xi)=\xi_D(u,s;0,-1;g_\xi),
\end{equation*}
namely,
\begin{equation}
\xi_D(u,s;1,-1;g_\xi)=\xi(u;s)-\check{\xi}(s,u)=\xi(u,s)-\xi(u-1,s+1). \label{B-C-interpolation}
\end{equation}
We will confirm that this is an interpolation of \eqref{1-5-3} (see Example \ref{Exam-6-4}). 
\fi
\end{example}

\begin{example}\label{Exam-4-6-3}
%By \eqref{eq:Tr3}, we have
%\begin{equation}
%  \xi_{D}(u,s;y,w;g)=-\frac{1}{\det g}\xi_{D}(s,u;w+1,y-1;g^{-1}).  
%\end{equation}
From \eqref{dual-1} with $n=1$, we obtain
\begin{multline*}
j_D(g^{-1},D_y^{-1})D_w^{-2}(D_u^{-1}-y+1)\xi_D(u,s;y,w;g)
\\
=
\Bigl(-\frac{1}{\det g}\Bigr)^2j_D(g,D_w^{-1})D_y^{-2}(D_s^{-1}-w+1)
\xi_D(s,u;w,y;g^{-1}).
\end{multline*}
Substituting \eqref{eq:Tr3} into the right-hand side and noting $j_D(g,D_w^{-1})w=wj_D(g,D_w^{-1})-cD_w^{-1}$, we have
%Therefore we have
\begin{multline}
\begin{aligned}
& j_D(g^{-1},D_y^{-1})D_w^{-2}(D_u^{-1}-y+1)\xi_D(u,s;y,w;g)
\\
  &=
\Bigl(-\frac{1}{\det g}\Bigr)^2
(-\det g)
j_D(g,D_w^{-1})D_y^{-2}(D_s^{-1}-w+1)
\xi_D(u,s;y+1,w-1;g) \\
%j_D(g^{-1},D_y^{-1})D_w^{-2}(D_u^{-1}-y+1)\xi_D(u,s;y,w;g)
%\\
  &=
  -\frac{1}{\det g}
  D_y^{-2}((D_s^{-1}-w+1)
  j_D(g,D_w^{-1})+cD_w^{-1})
  \xi_D(u,s;y+1,w-1;g)\\
  &=
  -\frac{1}{\det g}
  D_y^{-2}(D_s^{-1}-w+1)
  j_D(g,D_w^{-1})\xi_D(u,s;y+1,w-1;g)
  \\
  &\qquad
  -\frac{1}{\det g}
  cD_y^{-2}D_w^{-1}
  \xi_D(u,s;y+1,w-1;g).\label{eq:1.7}
\end{aligned}
\end{multline}
Moreover, %by \eqref{eq:rem-4-2}, we have
%\begin{equation}\label{eq:1.7-00}
%j_D(g,D_w^{-1})\xi_{D}(u,s;y,w;g)=
%j_N(g,D_w^{-1})\xi_{D}(u,s;y+1,w;g)+y^{-u}w^{-s}.
%\end{equation}
substituting \eqref{eq:rem-4-2} into the right-hand side of \eqref{eq:1.7}, we obtain
\begin{multline}\label{eq:B=C+C}
j_D(g^{-1},D_y^{-1})D_w^{-2}(D_u^{-1}-y+1)\xi_D(u,s;y,w;g)
\\
\begin{aligned}
  &=
  -\frac{1}{\det g}
  D_y^{-2}(D_s^{-1}-w+1)
  (j_N(g,D_w^{-1})\xi_{D}(u,s;y+2,w-1;g)+(y+1)^{-u}(w-1)^{-s})
  \\
  &\qquad
  -\frac{1}{\det g}
  cD_y^{-2}D_w^{-1}
  \xi_D(u,s;y+1,w-1;g).
\end{aligned}
\end{multline}
In particular, setting $(y,w,g_\eta)=(1,1,g_\eta)$, we obtain
\begin{equation*}
    \xi_D(u-1,s;1,-1;g_\eta)=\xi_D(u,s-1;1,0;g_\eta)-\xi_D(u,s-1;1,-1;g_\eta),
\end{equation*}
namely,
\begin{equation}
    \eta(u,s-1)=\widetilde{\xi}(u,s-1)+\widetilde{\xi}(u-1,s). \label{FR-BC}
\end{equation}
This can be regarded as an interpolation formula of \eqref{1-5-3} (see Example \ref{Exam-6-4}). 
\end{example}
\fi

\begin{remark}
  $\xi(u,s;y,w;g)$ can be slightly generalized with two elements $g,h\in\GL_2(\mathbb{C})$ and two appropriate paths $I,J$ which starts at $0$ and goes to $+\infty$ as
\begin{equation}
    \xi(u,s;y,w;h,g;I,J)
    =\frac{1}{\Gamma(s)\Gamma(u)}
    \int_{J} dt\int_{I} dx
    \frac{t^{s-1}x^{u-1}e^{-wt}e^{-yx}}{j_D(g,e^t)j_D(h,e^x)}
    \frac{1}{he^x-ge^t}.
\end{equation}
Since variables are treated completely symmetrically,
it is easy to see that the trivial symmetry
\begin{equation}
    \xi(u,s;y,w;h,g;I,J)
    =-\xi(s,u;w,y;g,h;J,I)
\end{equation}
holds. Moreover we can show that
\begin{equation}
    \xi(u,s;y-1,w;h,g;(0,+\infty),(0,+\infty))
    =\frac{1}{\det h}\xi_D(u,s;y,w;h^{-1}g),
\end{equation}
which implies \eqref{eq:Tr2}.

From the above we see that the pair $(g,h)$ does not give rise to a generalization,
%two elements $g,h$ are redundant and only the product $h^{-1}g$ is essential.
while two pathes $I,J$ are essential because by this modification, it is possible to avoid cusps and to define $\xi_D$ 
for any element $g\in\GL_2(\mathbb{C})$
without the restriction
\eqref{eq:def_cond}.
%even for $\id\in\GL_2(\mathbb{C})$ for example.
\end{remark}

%%%%%%%%%%%%%%%%%%%%%%%%%%%%%%%%%%%%%%%%%%%%%
\section{Analytic continuation}\label{sec-5}
%%%%%%%%%%%%%%%%%%%%%%%%%%%%%%%%%%%%%%%%%%%%%

We give integral representations with Hankel contours to enlarge the
domain of $\xi_D(u,s;y,w;g)$. In the following, $H_{\epsilon,R}$ denotes the Hankel
contour, which consists of a path from $R$ to $\epsilon$ on the
real axis, around the origin counter clockwise with radius $\epsilon$,
and back to $R$, where $R\in(0,+\infty]$ and $\epsilon$ is an arbitrarily small
positive number. We abbreviate $H=H_{\epsilon,+\infty}$.
 % Similarly $H_{R}$ denotes the truncated part of the Hankel
 %  contour inside the circle of radius $R$ centered at the
 %  origin.  

In the following proofs,
since the analyticities follow from the Morera theorem and the Fubini theorem,
we omit them.

\begin{lemma}
Let $k\in\mathbb{Z}_{\geq0}$. Then
$\Phi(z,u,y)$ has the integral representation
\begin{equation}\label{Phi-repre}
    \Phi(z,u,y)
    =
    \frac{1}{\Gamma(u)(e^{2\pi iu}-1)}
    \int_H x^{u-1}e^{-(y+k)x}\frac{z^k}{1-z e^{-x}}dx
    +\sum_{n=0}^{k-1}\frac{z^n}{(y+n)^{u}}.
    % \frac{1}{\Gamma(u)(e^{2\pi iu}-1)}
    %   \int_H x^{u-1}e^{-yx}\frac{1}{1-z e^{-x}}dx.
\end{equation}
This expression gives
the analytic continuation of
$\Phi(z,u,y)$ and is valid for
$z\in\mathbb{C}\setminus [1,+\infty)$ or $z=1$, $u\in\mathbb{C}$ and 
$y\in\mathbb{C}\setminus (-\infty,0]$ with
$\Re y>-k$
except for appropriate branch cuts.
Therefore $\Phi(z,u,y)$ is analytically continued in
$z\in\mathbb{C}\setminus [1,+\infty)$ or $z=1$, $u\in\mathbb{C}$ and 
$y\in\mathbb{C}\setminus (-\infty,0]$
except for appropriate branch cuts.
\end{lemma}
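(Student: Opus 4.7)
The plan is to combine the integral representation of Lemma~\ref{lm:IRLT} with the functional equation \eqref{eq:FELT1}, and then deform the path of integration to the Hankel contour $H$ in order to extend analyticity in $u$ to all of $\mathbb{C}$ while keeping a finite ``polar part'' separate in the sum.

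First, I would apply \eqref{eq:FELT1} to write
\begin{equation*}
  \Phi(z,u,y)=z^k\Phi(z,u,y+k)+\sum_{n=0}^{k-1}\frac{z^n}{(y+n)^u},
\end{equation*}
and invoke Lemma~\ref{lm:IRLT} for $\Phi(z,u,y+k)$. This is legitimate whenever $\Re u>0$ (or $\Re u>1$ at $z=1$) and $\Re(y+k)>0$, i.e.\ $\Re y>-k$, which already widens the admissible range in $y$ relative to Lemma~\ref{lm:IRLT}.

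Next I would deform the path $[0,+\infty)$ to the Hankel contour $H=H_{\epsilon,+\infty}$. Under the standing hypothesis $z\in\mathbb{C}\setminus[1,+\infty)$ (respectively $z=1$), the zeros of $1-ze^{-x}$, located at $x=\log z+2\pi in$, lie off the positive real axis for all small enough $\epsilon>0$, while the exponential factor $e^{-(y+k)x}$ gives decay at $+\infty$. The standard identity
\begin{equation*}
  \int_H x^{u-1}F(x)\,dx=(e^{2\pi iu}-1)\int_0^{\infty}x^{u-1}F(x)\,dx,\qquad F(x)=\frac{e^{-(y+k)x}}{1-ze^{-x}},
\end{equation*}
which relies on $\Re u>0$ to make the small-circle contribution negligible as $\epsilon$ is varied and on the holomorphy of $F$ near the positive real axis, then transforms the integral over $[0,+\infty)$ into $(e^{2\pi iu}-1)^{-1}$ times a Hankel integral. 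Substituting into the expression from the first step yields the asserted formula in the initial range of validity.

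Finally I would enlarge the domain by analytic continuation. Because $\epsilon>0$ keeps the contour away from the origin and the factor $e^{-(y+k)x}$ ensures convergence at infinity, the Hankel integral is entire in $u$ and jointly analytic in $(z,y)$ throughout the claimed region (modulo branch cuts for $(y+n)^{-u}$). The prefactor $\{\Gamma(u)(e^{2\pi iu}-1)\}^{-1}$ is meromorphic, with simple poles at positive integers; at $u=m\in\mathbb{Z}_{\geq1}$ the integrand $x^{u-1}F(x)$ becomes single-valued, so the Hankel integral vanishes and the apparent singularity is removable, while at $u\in\mathbb{Z}_{\leq0}$ the simple zero of $1/\Gamma(u)$ cancels the simple pole of $(e^{2\pi iu}-1)^{-1}$. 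Therefore the right-hand side extends analytically in $u$ to all of $\mathbb{C}$, and the identity theorem propagates the formula throughout the claimed domain. The main obstacle will be a careful check of the contour deformation—in particular, treating the borderline case $z=1$ where $1-ze^{-x}$ vanishes at the origin, so that the small circle of radius $\epsilon$ must genuinely enclose this singularity—together with the book-keeping needed to show that all the apparent poles of the prefactor are neutralized by the behavior of the Hankel integral.
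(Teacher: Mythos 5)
Your proposal is correct and follows essentially the same route as the paper, whose entire proof consists of combining \eqref{eq:FELT1} with Lemma \ref{lm:IRLT} and then passing to the Hankel contour; you have simply supplied the contour-deformation and pole-cancellation details that the paper leaves implicit. The only caveat is your claim that the singularity at every $u=m\in\mathbb{Z}_{\geq1}$ is removable: when $z=1$ and $u=1$ the Hankel integral does not vanish (the integrand then has a simple pole at the origin) and one recovers the genuine pole of the Hurwitz zeta function --- precisely the borderline case you flag yourself at the end.
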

\begin{proof}
  By \eqref{eq:FELT1} and Lemma \ref{lm:IRLT},
we have
\begin{equation}
%  \begin{split}
    \Phi(z,u,y)
    =
    \frac{1}{\Gamma(u)}
    \int_0^{\infty} x^{u-1}e^{-(y+k)x}\frac{z^k}{1-z e^{-x}}dx
    +\sum_{n=0}^{k-1}\frac{z^n}{(y+n)^{u}},
    % \Phi(z,u,y)
    % &=
    % \frac{1}{\Gamma(u)}
    % \int_0^{\infty} x^{u-1}e^{-(y+k)x}\frac{z^k}{1-z e^{-x}}dx
    % +\sum_{n=0}^{k-1}\frac{z^n}{(n+y)^{u}}
    % \\
    % &=
    % \frac{1}{\Gamma(u)(e^{2\pi is}-1)}
    % \int_H x^{u-1}e^{-(y+k)x}\frac{z^k}{1-z e^{-x}}dx
    % +\sum_{n=0}^{k-1}\frac{z^n}{(n+y)^{u}}.
%  \end{split}
\end{equation}
%  By this integral representation, $\Phi(z,u,y)$ is analytically continued in $z$
% to $\mathbb{C}\setminus [1,\infty)$.
which gives the integral representation with the Hankel contour.
\end{proof}

We study integral representations of $\xi_D(u,s;y,w;g)$
with Hankel contours by considering slightly general forms given in Lemma \ref{lm:DIR}, namely, %for $k\in\mathbb{Z}$,
for $k\in\mathbb{Z}_{\geq0}$,
  \begin{equation}
    \label{eq:xiD_k}
    \begin{split}
      \xi_D(u,s;y,w;g)&=\frac{1}{\Gamma(s)\Gamma(u)}
      \int_0^{\infty}\int_0^{\infty}
      \frac{t^{s-1}x^{u-1}e^{-wt}e^{-(y+k)x}}{j_D(g,e^t)}
      \frac{(ge^t)^k}{1-(ge^t) e^{-x}}
    dtdx
    \\
    &\qquad+
    % \begin{cases}
    %   \displaystyle
      \frac{1}{\Gamma(s)}
      \sum_{n=0}^{k-1}\frac{1}{(y+n)^{u}}\int_0^{\infty} t^{s-1}e^{-wt}\frac{(ge^t)^n}{j_D(g,e^t)}dt,
% \qquad&(k\geq0),\\
%       \displaystyle
%       -
%       \frac{1}{\Gamma(s)}
%       \sum_{n=1}^{-k}\frac{1}{(y-n)^{u}}\int_0^{\infty} t^{s-1}e^{-wt}\frac{(ge^t)^{-n}}{j_D(g,e^t)}dt\qquad&(k<0),
%     \end{cases}
  \end{split}
\end{equation}
by \eqref{eq:FELT2} and \eqref{eq:IR1}.
We denote the first term and the second term 
by $\xi_{1,k}(u,s;y,w;g)$ and $\xi_{2,k}(u,s;y,w;g)$ respectively so that
\begin{equation}
  \xi_D(u,s;y,w;g)=\xi_{1,k}(u,s;y,w;g)+\xi_{2,k}(u,s;y,w;g).
\end{equation}

First we give the explicit form of $\xi_{2,k}(u,s;y,w;g)$,
%the second term of \eqref{eq:xiD_k}, 
which gives its analytic continuation.
\begin{lemma}
\label{lm:xi_secondp}
%The second term of \eqref{eq:xiD_k} is
Let $k\in\mathbb{Z}_{\geq0}$.
\begin{equation}
\label{eq:xi_secondp}
  \begin{split}
    &\xi_{2,k}(u,s;y,w;g)=
    \sum_{n=0}^{k-1}
      \frac{1}{(y+n)^{u}}
      j_N(g,D_w^{-1})^n
    \\
    &\qquad\times \begin{cases}
      \displaystyle
      \frac{1}{d^{n+1}}
      \frac{1}{w^s}
      \qquad&(g\infty=\infty),
      \\
      \displaystyle
      \frac{1}{c^{n+1}}
      \frac{1}{(w+n+1)^s}
      \qquad&(g0=\infty),
      \\
      \displaystyle
      \frac{1}{c^{n+1}}
      \frac{1}{n!}D_w^{n+1}
      \Bigl(\prod_{j=1}^n(D_s^{-1}-w+j)\Bigr)
      \Phi(-d/c,s,w)\qquad&(\text{otherwise}),
    \end{cases}
  \end{split}
\end{equation}
which gives the analytic continuation to the whole space in $u,s,y,w$ except for appropriate branch cuts.
\end{lemma}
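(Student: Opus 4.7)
The plan is to evaluate, for each $n\in\{0,\dots,k-1\}$, the integral
\[
I_n(s,w):=\frac{1}{\Gamma(s)}\int_{0}^{\infty}t^{s-1}e^{-wt}\frac{(ge^{t})^{n}}{j_D(g,e^{t})}\,dt
\]
that appears as a summand of $\xi_{2,k}(u,s;y,w;g)$, and to identify each $I_n$ with $j_N(g,D_w^{-1})^n$ applied to the bracketed expression in the statement. The starting point is the algebraic identity $(ge^{t})^{n}/j_D(g,e^{t})=(ae^{t}+b)^{n}/(ce^{t}+d)^{n+1}$, which separates numerator from denominator. Next, since $D_{w}^{-1}$ acts on $\int_{0}^{\infty}t^{s-1}e^{-wt}G(t)\,dt$ by shifting $w\mapsto w-1$, i.e.\ by inserting the factor $e^{t}$ inside the integrand, a binomial expansion gives the operator identity
\[
\frac{1}{\Gamma(s)}\int_{0}^{\infty}t^{s-1}e^{-wt}(ae^{t}+b)^{n}G(t)\,dt
=j_N(g,D_{w}^{-1})^{n}\cdot\frac{1}{\Gamma(s)}\int_{0}^{\infty}t^{s-1}e^{-wt}G(t)\,dt
\]
for any admissible $G$. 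Taking $G(t)=1/(ce^{t}+d)^{n+1}$ reduces the task to computing
\[
J_n(s,w):=\frac{1}{\Gamma(s)}\int_{0}^{\infty}t^{s-1}e^{-wt}(ce^{t}+d)^{-(n+1)}\,dt
\]
in each of the three cases of the statement and prefixing $j_N(g,D_w^{-1})^n$.

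The first two cases will be immediate from the standard gamma integral: if $c=0$ (equivalently $g\infty=\infty$), then $(ce^t+d)^{n+1}=d^{n+1}$ and $J_n=1/(d^{n+1}w^s)$; if $d=0$ (equivalently $g0=\infty$), then $(ce^t+d)^{n+1}=c^{n+1}e^{(n+1)t}$ and the shift $w\mapsto w+n+1$ gives $J_n=1/\bigl(c^{n+1}(w+n+1)^s\bigr)$. For the generic case $cd\neq 0$, I would rewrite
\[
\frac{1}{(ce^{t}+d)^{n+1}}=\frac{1}{c^{n+1}}\cdot\frac{e^{-(n+1)t}}{\bigl(1-(-d/c)e^{-t}\bigr)^{n+1}}
\]
and then invoke the first equation of Lemma \ref{lm:DS} with $z=-d/c$, $u=s$ and $y=w+n+1$ to obtain
\[
J_n=\frac{1}{c^{n+1}\,n!}\prod_{k=1}^{n}\bigl(D_s^{-1}-w-n-1+k\bigr)\Phi(-d/c,s,w+n+1).
\]

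To match the form in the statement, I will re-index via $k\mapsto n+1-k$ and then use the intertwining relation $(D_s^{-1}-w-k)D_w^{n+1}=D_w^{n+1}\bigl(D_s^{-1}-w+(n+1-k)\bigr)$ to pull $D_w^{n+1}$ outside the product, converting the expression into $\frac{1}{c^{n+1}n!}D_w^{n+1}\prod_{j=1}^{n}(D_s^{-1}-w+j)\Phi(-d/c,s,w)$, which is the asserted formula. The analyticity in $u,s,y,w$ then follows termwise: the explicit factors $(y+n)^{-u}$, $w^{-s}$ and $(w+n+1)^{-s}$ are meromorphic in their parameters away from the obvious branch cuts, and the analytic continuation of $\Phi(-d/c,s,w)$ on the prescribed cut is provided by the preceding lemma. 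The only point that requires genuine care is the operator bookkeeping in the third case; once the commutation between $D_w^{n+1}$, $D_s^{-1}$ and the $w$-dependent shifts is recorded correctly, the rest of the argument is a direct application of the gamma integral and Lemma \ref{lm:DS}.
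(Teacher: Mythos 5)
Your proposal is correct and follows essentially the same route as the paper: the numerator $(ae^{t}+b)^{n}$ is turned into the operator $j_N(g,D_w^{-1})^{n}$, and the three cases for the denominator are settled by the gamma integral and the first identity of Lemma \ref{lm:DS}; whether one applies that lemma at $w+n+1$ and then commutes $D_w^{n+1}$ outward (your route) or inserts $D_w^{n+1}$ first and applies the lemma at $w$ (the paper's route) is immaterial, since all the shift operators involved commute. The one step you leave implicit is the verification that $-d/c\in\mathbb{C}\setminus(1,+\infty)$ — needed so that the analytic continuation of $\Phi(-d/c,s,w)$ from the preceding lemma is available — which the paper deduces from \eqref{eq:def_cond}, because $-d/c\in(1,+\infty)$ would force $\infty\in g((1,+\infty))$.
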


\begin{proof}
If $g\infty=\infty$, then $c=0$ and
\begin{equation}
  \begin{split}
    \int_0^{\infty} t^{s-1}e^{-wt}\frac{(ge^t)^n}{j_D(g,e^t)}dt
    &= \frac{1}{d^{n+1}}j_N(g,D_w^{-1})^n
    \int_0^{\infty} t^{s-1}e^{-wt}dt
    \\
    &=\frac{\Gamma(s)}{d^{n+1}}j_N(g,D_w^{-1})^n\frac{1}{w^s},
  \end{split}
\end{equation}
which implies \eqref{eq:xi_secondp} in this case.
If $g0=\infty$, then $d=0$ and
\begin{equation}
  \begin{split}
    \int_0^{\infty} t^{s-1}e^{-wt}\frac{(ge^t)^n}{j_D(g,e^t)}dt
    &= \frac{1}{c^{n+1}}j_N(g,D_w^{-1})^n
    \int_0^{\infty} t^{s-1}e^{-(w+n+1)t}dt
    \\
    &=\frac{\Gamma(s)}{c^{n+1}}j_N(g,D_w^{-1})^n\frac{1}{(w+n+1)^s},
  \end{split}
\end{equation}
which implies \eqref{eq:xi_secondp} in this case.
If $g0,g\infty\neq \infty$, then $c,d\neq 0$ and
\begin{equation}
  \begin{split}
    \int_0^{\infty} t^{s-1}e^{-wt}\frac{(ge^t)^n}{j_D(g,e^t)}dt
    &=\frac{1}{c^{n+1}}j_N(g,D_w^{-1})^nD_w^{n+1}
    \int_0^{\infty} t^{s-1}e^{-wt}\frac{1}{(1-(-d/c)e^{-t})^{n+1}}dt
    \\
    &=
    \frac{\Gamma(s)}{c^{n+1}}j_N(g,D_w^{-1})^nD_w^{n+1}
    \frac{1}{n!}
    % \\
    % &\qquad\times
    \Bigl(\prod_{j=1}^n(D_s^{-1}-w+j)\Bigr)
    \Phi(-d/c,s,w),
  \end{split}
\end{equation}
by Lemma \ref{lm:DS}. 
If $-d/c=T$ with $1<T<\infty$, then $cT+d=0$, which implies $\{\infty\}\in g((1,+\infty))$ and
contradicts to the assumption \eqref{eq:def_cond}
%for $g\in\GL_2(\mathbb{C}$) 
and hence $-d/c\in\mathbb{C}\setminus(1,+\infty)$.
Hence we obtain \eqref{eq:xi_secondp} in this case.
\end{proof}

\begin{theorem}
\label{thm:domain2}
Let $k\in\mathbb{Z}_{\geq0}$.
%   If $g1\notin\{1,\infty\}$, then we have
% \begin{equation}
%   \label{eq:ext1k}
%   \begin{split}
%     \xi_D(u,s;y,w;g)&=\frac{1}{\Gamma(s)\Gamma(u)(e^{2\pi is}-1)}
%   \\
%   &\qquad\times
%   \int_0^{\infty}dx \int_H dt 
%   \frac{t^{s-1}x^{u-1}e^{-wt}e^{-(y+k)x}}{j_D(g,e^t)}
%   \frac{(ge^t)^k}{1-(ge^t) e^{-x}}
%   \\
%   &+\xi_{2,k}(u,s;y,w;g),
% \end{split}
% \end{equation}
% which 
% except for the branch cuts due to $\xi_{2,k}(u,s;y,w;g)$,
% gives the analytic continuation for
% $s\in\mathbb{C}, \Re u>\nu_1, \Re w>\mu_\infty-1+k(\delta_{g\infty,\infty}-\delta_{g\infty,0}), \Re y>\nu_\infty-k$ and
% % $s\in\mathbb{C}, \Re u>\nu_1, \Re w>\mu_\infty-1, \Re
% % y>\nu_\infty$.
% the continuous extension for
%   $\Re w=\mu_\infty-1+k(\delta_{g\infty,\infty}-\delta_{g\infty,0})$ when $\Re s<0$.
% If $1\notin\{g1,g\infty\}$, then we have
% \begin{equation}
% \label{eq:ext2k}
% \begin{split}
%   \xi_D(u,s;y,w;g)&=\frac{1}{\Gamma(s)\Gamma(u)(e^{2\pi iu}-1)}
%   \\
%   &\qquad\times
%   \int_0^{\infty} dt \int_H dx
%   \frac{t^{s-1}x^{u-1}e^{-wt}e^{-(y+k)x}}{j_D(g,e^t)}
%   \frac{(ge^t)^k}{1-(ge^t) e^{-x}}  \\
%   &+\xi_{2,k}(u,s;y,w;g),
% \end{split}
% \end{equation}
% which except for the branch cuts due to $\xi_{2,k}(u,s;y,w;g)$,
% gives the analytic continuation for
% $\Re s>\mu_1+k(\delta_{g1,\infty}-\delta_{g1,0}), u\in\mathbb{C}, \Re w>\mu_\infty-1+k(\delta_{g\infty,\infty}-\delta_{g\infty,0}), \Re y>\nu_\infty-k$ and
% % $\Re s>\mu_1, u\in\mathbb{C}, \Re w>\mu_\infty-1, \Re
% % y>\nu_\infty$.
% the continuous extension for
% $\Re y=\nu_\infty-k$ when $\Re u<0$.
Assume $g1\neq1$.
Then
%If $g1\notin\{1,\infty\}$ and $1\notin\{g1,g\infty\}$, then 
we have
\begin{multline}
\label{eq:ext3k}
  \xi_D(u,s;y,w;g)\\
  \begin{aligned}
    &=\frac{1}{\Gamma(s)\Gamma(u)(e^{2\pi is}-1)(e^{2\pi iu}-1)}
  \int_{H_{\epsilon,1}} dx \int_{H_{\epsilon,1}} dt
  \frac{t^{s-1}x^{u-1}e^{-wt}e^{-(y+k)x}}{j_D(g,e^t)}
  \frac{(ge^t)^k}{1-(ge^t) e^{-x}}
  \\
  &\qquad+
  \frac{1}{\Gamma(s)\Gamma(u)(e^{2\pi is}-1)}
  \int_1^\infty dx \int_{H_{\epsilon e^{-x},1}} dt
  \frac{t^{s-1}x^{u-1}e^{-wt}e^{-(y+k)x}}{j_D(g,e^t)}
  \frac{(ge^t)^k}{1-(ge^t) e^{-x}}
  \\
  &\qquad+
  \frac{1}{\Gamma(s)\Gamma(u)(e^{2\pi iu}-1)}
  \int_1^\infty dt \int_{H_{\epsilon e^{-t},1}} dx
  \frac{t^{s-1}x^{u-1}e^{-wt}e^{-(y+k)x}}{j_D(g,e^t)}
  \frac{(ge^t)^k}{1-(ge^t) e^{-x}}
  \\
  &\qquad+
  \frac{1}{\Gamma(s)\Gamma(u)}
  \int_1^\infty dt \int_1^\infty dx
  \frac{t^{s-1}x^{u-1}e^{-wt}e^{-(y+k)x}}{j_D(g,e^t)}
  \frac{(ge^t)^k}{1-(ge^t) e^{-x}}
  \\
  &\qquad+\xi_{2,k}(u,s;y,w;g),
\end{aligned}
\end{multline}
% \begin{equation}
% \label{eq:ext3k}
% \begin{split}
%   \xi_D(u,s;y,w;g)&=\frac{1}{\Gamma(s)\Gamma(u)(e^{2\pi is}-1)(e^{2\pi iu}-1)}
%   \\
%   &\qquad\times
%   \int_H dt \int_H dx
%   \frac{t^{s-1}x^{u-1}e^{-wt}e^{-(y+k)x}}{j_D(g,e^t)}
%   \frac{(ge^t)^k}{1-(ge^t) e^{-x}}
%   \\
%   &+\xi_{2,k}(u,s;y,w;g),
% \end{split}
% \end{equation}
which except for the branch cuts due to $\xi_{2,k}(u,s;y,w;g)$,
gives the analytic continuation for
$u, s\in\mathbb{C}$, $\Re y>\nu_\infty-k$, $\Re w>\mu_\infty-1+k(\delta_{g\infty,\infty}-\delta_{g\infty,0})$, and
%$s,u\in\mathbb{C}, \Re w>\mu_\infty-1, \Re y>\nu_\infty$.
  % \begin{equation}
  %   \frac{1}{\Gamma(s)\Gamma(u)}
  %   \int_0^{\infty}\int_0^{\infty}
  %   \frac{t^{s-1}x^{u-1}e^{-wt}e^{-(y+k)x}}{j_D(g,e^t)}
  %   \frac{(ge^t)^k}{1-(ge^t) e^{-x}}
  %   dtdx
  % \end{equation}
the continuous extension for
  $\Re y=\nu_\infty-k$ when $\Re u<0$ 
and $\Re w=\mu_\infty-1+k(\delta_{g\infty,\infty}-\delta_{g\infty,0})$ when $\Re s<0$.
\end{theorem}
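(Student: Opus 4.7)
The starting point is the decomposition $\xi_D = \xi_{1,k} + \xi_{2,k}$ of \eqref{eq:xiD_k}; since Lemma~\ref{lm:xi_secondp} already extends $\xi_{2,k}$ to the whole space in $(u,s,y,w)$, it suffices to analytically continue the double-integral piece $\xi_{1,k}(u,s;y,w;g)$. My plan is to split each of the iterated integrals $\int_0^\infty dt$ and $\int_0^\infty dx$ at $1$, decomposing the region into four blocks, and then to convert every $(0,1)$-portion into a Hankel contour by the classical identity
\begin{equation*}
  \int_0^1 t^{s-1} f(t)\,dt = \frac{1}{e^{2\pi i s}-1}\int_{H_{\epsilon,1}} t^{s-1} f(t)\,dt,
\end{equation*}
valid whenever $f$ is analytic in a neighborhood of the closed contour. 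Each such conversion inserts exactly one of the prefactors $1/(e^{2\pi is}-1)$ or $1/(e^{2\pi iu}-1)$ appearing in \eqref{eq:ext3k}.

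For the doubly-bounded block on $(0,1)^2$, the essential input is Lemma~\ref{lm:est}(3): because $g1\neq1$, the integrand is bounded on $W_{0,\epsilon,R}^2$ for some fixed $\epsilon>0$, so both Hankel contours can be taken with uniform small radius $\epsilon$. For the two mixed blocks, the singular locus $\{ge^t = e^x\}$ may approach the boundary of the deformation region: it touches $t=0$ as $x\to\infty$ exactly when $g1=\infty$, and it touches $x=0$ as $t\to\infty$ exactly when $g\infty=1$. Lemma~\ref{lm:est}(4)--(5) control the rate of this approach and dictate the variable Hankel radii $\epsilon e^{-x}$ and $\epsilon e^{-t}$ in the second and third terms of \eqref{eq:ext3k}; when the corresponding configuration does not occur, these shrinking radii are harmlessly smaller than necessary. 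The block on $(1,\infty)^2$ requires no deformation and supplies the fourth term verbatim.

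Joint analyticity in $u,s\in\mathbb{C}$ is then automatic, since each Hankel contour avoids the origin, so $t^{s-1}$, $x^{u-1}$ are entire on the contour; Morera's theorem plus Fubini handle the iterated integrals and remove the constraints $\Re s>\mu_1$, $\Re u>\nu_1$ of Theorem~\ref{thm:domain1}. The surviving constraints on $\Re y$ and $\Re w$ come from asymptotic integrability on the unbounded factors $(1,\infty)$. By Lemma~\ref{lm:est}(1), integrability in $x$ at infinity requires $\Re y > \nu_\infty - k$, the shift by $k$ reflecting the factor $e^{-kx}$ generated in \eqref{eq:xiD_k}. Integrability in $t$ at infinity rests on the growth of $(ge^t)^k/j_D(g,e^t)$, which behaves like $e^{kt}$ when $c=0$ (i.e.\ $g\infty=\infty$), like $e^{-(k+1)t}$ when $a=0$ (i.e.\ $g\infty=0$), and like $e^{-t}$ otherwise, producing the stated Kronecker-$\delta$ correction. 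The continuous extension at the boundary $\Re y = \nu_\infty - k$ under $\Re u<0$ exploits the polynomial factor $x^{\Re u-1-\nu_1}$ in Lemma~\ref{lm:est}(1), which supplies additional decay rendering the critical exponential integrable; the case $\Re w = \mu_\infty-1+k(\delta_{g\infty,\infty}-\delta_{g\infty,0})$ under $\Re s<0$ is symmetric.

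The principal difficulty is the careful justification of the Hankel deformation on the two mixed blocks, where the inner contour radius depends on the outer integration variable. One must certify that the integrand is jointly analytic on the two-variable region $\{(t,x) \,:\, t\in H_{\epsilon e^{-x},1},\ x\in(1,\infty)\}$, and symmetrically for the other block, by invoking Lemma~\ref{lm:est}(4)--(5) to show the singular locus stays uniformly away from the contour, and then combining this with Fubini to commute the $x$-integration with the Cauchy deformation in $t$. Once this joint-analyticity step is in place, the remaining bookkeeping across the four blocks and the collection of Hankel prefactors is routine.
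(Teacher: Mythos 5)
Your proposal is correct and follows essentially the same route as the paper's proof: the same splitting of $\xi_{1,k}$ into four blocks, the same Hankel conversions justified by the boundedness estimate of Lemma \ref{lm:est}(3) on the doubly-bounded block and by the distance estimates \eqref{eq:dist1}--\eqref{eq:dist2} for the variable radii on the mixed blocks, and the same reading-off of the constraints on $\Re y$ and $\Re w$ (including the boundary cases) from Lemma \ref{lm:est}(1). The only difference is cosmetic: you split at $1$ directly, whereas the paper splits at $\epsilon'$ and $R'$ and then rearranges the regions.
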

\begin{proof}
  There exists $M>0$ such that for all sufficiently large $t>R'$, % $t\in W_\epsilon$ %(0,+\infty)$,
\begin{equation}
  \begin{split}
    |ge^t| &\leq
    \begin{cases}
      Me^{-t}\qquad&(g\infty=0),\\
      Me^t\qquad&(g\infty=\infty),\\
      M\qquad&(\text{otherwise})
    \end{cases}\\
    &=Me^{(\delta_{g\infty,\infty}-\delta_{g\infty,0})t}
  \end{split}
\end{equation}
and for all sufficiently small $|t|<\epsilon'$
\begin{equation}
  \begin{split}
    |ge^t| &\leq
   \begin{cases}
     M|t|\qquad&(g1=0),\\
     M|t|^{-1}\qquad&(g1=\infty),\\
     M\qquad&(\text{otherwise})
   \end{cases}\\
   &=M|t|^{(\delta_{g1,0}-\delta_{g1,\infty})}.
 \end{split}
\end{equation}
Assume $g1\neq1$. 
By Lemma \ref{lm:est},
%   We use the same notation as in the proof of Theorem
%   \ref{thm:domain1}. 
% Then we see that
%   $(1,R)\times(1,+\infty)\subset I$ for a sufficiently large $R>1$.  By Lemma
%   \ref{lm:boundNV}, there exists $M>0$ such that
  % \begin{equation}
  %   \Bigl|\frac{1}{j_D(g,T)}\frac{1}{1-(gT)X^{-1}}\Bigr|\leq \frac{M}{|T|}
  % \end{equation}
%   for all $(T,X)\in (W_{1,\epsilon}\cap \{z~|~\Re z<R\})\times(1,+\infty)$.
%for all $(t,x)\in W_{0,\epsilon}\times(0,+\infty)$,
%for all $(t,x)\in (W_{0,\epsilon'}\cap \{z~|~\Re z<R'\})^2$,
\begin{multline}
  \Bigl|\frac{t^{s-1}x^{u-1}e^{-wt}e^{-yx}}{j_D(g,e^t)}
  \frac{1}{1-(ge^t) e^{-x}}\Bigr| 
  \\
  \leq
  M'
  \begin{cases}
    e^{(\mu_\infty-1-\Re w)t}
    e^{(\nu_\infty -\Re y)x}\qquad&((t,x)\in(\epsilon',+\infty)^2),\\
    |t|^{\Re s-1}
    |x|^{\Re u-1}\qquad&((t,x)\in W_{0,\epsilon',R'}^2).
\end{cases}
\end{multline}
% for all $(t,x)\in(\epsilon',+\infty)^2$,
% \begin{equation}
%   \Bigl|\frac{t^{s-1}x^{u-1}e^{-wt}e^{-yx}}{j_D(g,e^t)}
%   \frac{1}{1-(ge^t) e^{-x}}\Bigr| 
%   \leq
%   M'
% e^{(\mu_\infty-1-\Re w)t}
% e^{(\nu_\infty -\Re y)x},
% \end{equation}
% and
% for all $(t,x)\in W_{0,\epsilon',R'}^2$,
% % \label{eq:est4}
% \begin{equation}
%   \Bigl|\frac{t^{s-1}x^{u-1}e^{-wt}e^{-yx}}{j_D(g,e^t)}
%   \frac{1}{1-(ge^t) e^{-x}}\Bigr| 
%   \leq
%   M'
%   |t|^{\Re s-1}
%   |x|^{\Re u-1}.
%   %e^{(\mu_\infty-1-\Re w) \Re t}e^{(\nu_\infty-\Re y) x}(x+1)^{\nu_1}
%   % |t|^{\Re s-1}
%   % x^{\Re u-1-\nu_1}e^{(\mu_\infty-1-\Re w) \Re t}e^{(\nu_\infty-\Re y) x}(x+1)^{\nu_1}
%   % e^{2\pi |\Im s|+\epsilon|\Im w|}.
% \end{equation}
Thus for $\Re u>\nu_1, 
\Re s>\mu_1+k(\delta_{g1,\infty}-\delta_{g1,0}),
\Re y>\nu_\infty-k,
\Re w>\mu_\infty-1+k(\delta_{g\infty,\infty}-\delta_{g\infty,0})$,
%Thus for $\Re s>k(\delta_{g1,\infty}-\delta_{g1,0}),\Re u>0, \Re w>\mu_\infty-1+k(\delta_{g\infty,\infty}-\delta_{g\infty,0}), \Re y>\nu_\infty-k$,
we see that
\begin{equation}
  \int_{\epsilon'}^\infty
  \int_{\epsilon'}^\infty
  \frac{t^{s-1}x^{u-1}e^{-wt}e^{-(y+k)x}}{j_D(g,e^t)}
  \frac{(ge^t)^k}{1-(ge^t) e^{-x}} dtdx
\end{equation}
and
\begin{multline}
  \int_0^{R'}
  \int_0^{R'}
  \frac{t^{s-1}x^{u-1}e^{-wt}e^{-(y+k)x}}{j_D(g,e^t)}
  \frac{(ge^t)^k}{1-(ge^t) e^{-x}} dtdx
  \\
  =\frac{1}{(e^{2\pi is}-1)(e^{2\pi iu}-1)}
  \int_{H_{\epsilon,R'}}   \int_{H_{\epsilon,R'}} 
  \frac{t^{s-1}x^{u-1}e^{-wt}e^{-(y+k)x}}{j_D(g,e^t)}
  \frac{(ge^t)^k}{1-(ge^t) e^{-x}} dtdx
\end{multline}
are integrable. Let
\begin{equation}
  A=\int_{R'}^\infty dx 
  \int_0^{\epsilon'} dt
  \frac{t^{s-1}x^{u-1}e^{-wt}e^{-(y+k)x}}{j_D(g,e^t)}
  \frac{(ge^t)^k}{1-(ge^t) e^{-x}}.
\end{equation}
If $g1=\infty$, then by \eqref{eq:dist1}, the denominator does not vanish 
for $\epsilon e^{-x}\geq |t|$. Hence
\begin{equation}
\label{eq:HA}
  A=\frac{1}{e^{2\pi i s}-1}\int_{R'}^\infty dx 
  \int_{H_{\epsilon e^{-x},\epsilon'}} dt
  \frac{t^{s-1}x^{u-1}e^{-wt}e^{-(y+k)x}}{j_D(g,e^t)}
  \frac{(ge^t)^k}{1-(ge^t) e^{-x}}.
\end{equation}
If $g1\neq\infty$, then it is easier to see that the denominator does not vanish in the same region as the above, and \eqref{eq:HA} holds.

In the region $(0,\epsilon')\times (R',\infty)$, the same argument works well and we have the assertion by rearranging the regions.
\end{proof}

\begin{remark}
For $k\in\mathbb{Z}_{<0}$, we have similar results as in Lemma \ref{lm:xi_secondp} and Theorem \ref{thm:domain2} by use of \eqref{eq:FELT2}, though we omit the detail.
% In this case
% $g0,g\infty\neq 0$
\end{remark}

In the case $k=0$, we obtain the following theorem.
\begin{theorem}
\label{thm:domain3}
  If $g1\notin\{1,\infty\}$, then we have
\begin{equation}
\label{eq:ext1}
  \xi_D(u,s;y,w;g)=\frac{1}{\Gamma(s)\Gamma(u)(e^{2\pi is}-1)}
  \int_H dt \int_0^{\infty}dx 
  \frac{t^{s-1}x^{u-1}e^{-wt}e^{-yx}}{j_D(g,e^t)}
  \frac{1}{1-(ge^t) e^{-x}},
\end{equation}
which gives the analytic continuation for
$\Re u>\nu_1$, $s\in\mathbb{C}$, $\Re y>\nu_\infty$, $\Re w>\mu_\infty-1$, and
the continuous extension for
%  $\Re w=\mu_\infty-1+k(\delta_{g\infty,\infty}-\delta_{g\infty,0})$ when $\Re s<0$.
  $\Re w=\mu_\infty-1$ when $\Re s<0$. %, and for
%
%
% If $1\notin\{g1,g\infty\}$, then we have
% \begin{equation}
% \label{eq:ext2}
%   \xi_D(u,s;y,w;g)=\frac{1}{\Gamma(s)\Gamma(u)(e^{2\pi iu}-1)}
%   \int_0^{\infty} dt \int_H dx
%   \frac{t^{s-1}x^{u-1}e^{-wt}e^{-yx}}{j_D(g,e^t)}
%   \frac{1}{1-(ge^t) e^{-x}},
% \end{equation}
% which gives the analytic continuation for
% $\Re s>\mu_1, u\in\mathbb{C}, \Re w>\mu_\infty-1, \Re
% y>\nu_\infty$ and
% the continuous extension for
% $\Re y=\nu_\infty$ when $\Re u<0$.
%
%
% If $g1\notin\{1,\infty\}$ and $1\notin\{g1,g\infty\}$, then we have
% \begin{multline}
% \label{eq:ext3}
%   \xi_D(u,s;y,w;g)=\frac{1}{\Gamma(s)\Gamma(u)(e^{2\pi is}-1)(e^{2\pi iu}-1)}
%   \\
%   \times
%   \int_H dt \int_H dx
%   \frac{t^{s-1}x^{u-1}e^{-wt}e^{-yx}}{j_D(g,e^t)}
%   \frac{1}{1-(ge^t) e^{-x}},
% \end{multline}
% which gives the analytic continuation for
% $s,u\in\mathbb{C}, \Re w>\mu_\infty-1, \Re y>\nu_\infty$ and
% the continuous extension for
%   $\Re w=\mu_\infty-1$ when $\Re s<0$,
% and
% $\Re y=\nu_\infty$ when $\Re u<0$.
\end{theorem}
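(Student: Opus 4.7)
The plan is to apply the Hankel-contour trick to the $t$-variable alone in the double integral \eqref{eq:IRXD}, converting $\int_0^{\infty}\!dt$ into $\int_H dt$ and picking up the factor $1/(e^{2\pi is}-1)$, while leaving the $x$-integration on $(0,\infty)$. In the original domain $\Re s>\mu_1$ of Theorem \ref{thm:domain1}, this yields the representation \eqref{eq:ext1} via Fubini. The right-hand side, taken as a definition, then makes sense for all $s\in\mathbb{C}$ under the remaining restrictions on $u,y,w$, giving the claimed analytic continuation.

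The central verification is that, for each fixed $x\in(0,\infty)$, the function
\[
F(t,x):=\frac{e^{-wt}e^{-yx}x^{u-1}}{j_D(g,e^t)\bigl(1-(ge^t)e^{-x}\bigr)}
\]
extends holomorphically in $t$ to a complex neighborhood of $[0,\infty)$, and this neighborhood can be taken uniformly in $x$. The strengthened assumption $g1\notin\{1,\infty\}$ is essential here, compared with $g1\neq 1$ used in Theorem \ref{thm:domain2}: the condition $g1\neq\infty$ forces $c+d=j_D(g,1)\neq 0$, so $j_D(g,e^t)$ has no zero on a fixed complex disk around $t=0$; and $g1\neq 1$ combined with \eqref{eq:def_cond} prevents $ge^t=e^x$ for any $(t,x)\in[0,\infty)^2$. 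Uniformity in $x$ then splits into two cases: for $x$ in a bounded range, Lemma \ref{lm:est}(3) with $\mu_1=0$ provides a uniform bound on a fixed-radius disk $|t|<\epsilon$; for $x$ large, the factor $e^{-x}$ makes $(ge^t)e^{-x}$ small so that the second denominator stays away from zero, while the bound \eqref{eq:est1} with $\mu_1=0$ handles the remaining contribution.

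Given these ingredients, the standard Hankel identity $\int_0^\infty t^{s-1}F(t,x)\,dt=(e^{2\pi is}-1)^{-1}\int_H t^{s-1}F(t,x)\,dt$ holds pointwise in $x$ for $\Re s>0$, and Fubini produces \eqref{eq:ext1}. To extend analytically to $\Re u>\nu_1$, $s\in\mathbb{C}$, $\Re y>\nu_\infty$, $\Re w>\mu_\infty-1$, I would estimate the Hankel integrand: on the straight rays of $H$ (where $|t|>\epsilon$), estimate \eqref{eq:est1} with $\mu_1=0$ supplies a $(t,x)$-integrable majorant for any $s$; on the small circle $|t|=\epsilon$, Lemma \ref{lm:est}(3) bounds the integrand by a constant, giving a contribution of order $\epsilon^{\Re s}$ times an $x$-integrable factor. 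Analyticity in $(u,s,y,w)$ then follows from Morera and Fubini, and the continuous extension to $\Re w=\mu_\infty-1$ for $\Re s<0$ follows by dominated convergence. The main technical obstacle is precisely the uniformity in $x$ of the $t$-analyticity described in the second paragraph; this is exactly the role of the strengthened hypothesis $g1\notin\{1,\infty\}$, which permits a single, fixed Hankel contour $H=H_{\epsilon,\infty}$ rather than the shrinking radius $\epsilon e^{-x}$ and four-region splitting required in Theorem \ref{thm:domain2}.
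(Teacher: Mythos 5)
Your proposal is correct and follows essentially the same route as the paper: the paper's own proof simply observes that when $g1\notin\{1,\infty\}$ the Hankel radius in $t$ can be chosen uniformly in $x\in(0,+\infty)$ (so the four-region decomposition of Theorem \ref{thm:domain2} collapses to a single Hankel contour in $t$ with an ordinary $x$-integral), which is exactly the uniformity you establish via Lemma \ref{lm:est} and the non-vanishing of $j_D(g,e^t)$ and $1-(ge^t)e^{-x}$ near $t=0$. Your identification of the roles of $g1\neq\infty$ (uniform disk of holomorphy) and $g1\neq1$ (no zeros of the second denominator on $[0,\infty)^2$) matches the paper's reasoning.
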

\begin{proof}
If $g1\notin\{1,\infty\}$, %  and $1\notin\{g1,g\infty\}$, 
then 
in the proof of Theorem \ref{thm:domain2}, 
the radius of the Hankel contours can be taken uniformly in $t$
while $x\in(0,+\infty)$. 
Thus patching contours, we have the assertion.
\end{proof}

When $u$, $s$ or both are nonpositive integers, 
%When $s$ is a nonpositive integer, 
 further analytic continuation is possible, which leads us to generalizations of the poly-Bernoulli polynomials. 
\begin{theorem}\label{AC-uyw}
%  Assume $g1\notin\{1,\infty\}$. % and $1\notin\{g1,g\infty\}$. 
  Assume $g1\neq1$. % and $1\notin\{g1,g\infty\}$. 
For $s=-m\in\mathbb{Z}_{\leq0}$, 
$\xi_D(u,s;y,w;g)$ is analytically continued
to
$u,y,w\in\mathbb{C}$ except for appropriate branch cuts
and we have the integral representation
\begin{multline}\label{Eq-xi-2}
  \xi_D(u,-m;y,w;g)\\
  \begin{aligned}
    &=\frac{(-1)^m m!}{2\pi i}\frac{1}{\Gamma(u)(e^{2\pi iu}-1)}
    \int_{H} dx
    \int_{|t|=\epsilon e^{{}-\Re x}} dt 
    \frac{t^{-m-1}x^{u-1}e^{-wt}e^{-(y+k)x}}{j_D(g,e^t)}
    \frac{(ge^t)^k}{1-(ge^t) e^{-x}}
    \\
    &\qquad+\xi_{2,k}(u,-m;y,w;g).
  \end{aligned}
\end{multline}

For $u=-m\in\mathbb{Z}_{\leq0}$, 
$\xi_D(u,s;y,w;g)$ is analytically continued
to
$s,y,w\in\mathbb{C}$ except for appropriate branch cuts.
% and we have the integral representation
% \begin{multline}\label{Eq-xi-3}
%   \xi_D(-m,s;y,w;g)\\
%   \begin{aligned}
%     &=\frac{(-1)^m m!}{2\pi i}\frac{1}{\Gamma(s)(e^{2\pi is}-1)}
%     \int_{H} dt
%     \int_{|x|=\epsilon e^{{}-\Re t}} dx 
%     \frac{t^{-m-1}x^{u-1}e^{-wt}e^{-(y+k)x}}{j_D(g,e^t)}
%     \frac{(ge^t)^k}{1-(ge^t) e^{-x}}
%     \\
%     &\qquad+\xi_{2,k}(-m,s;y,w;g).
%   \end{aligned}
% \end{multline}
\end{theorem}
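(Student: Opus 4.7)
The plan is to let $s \to -m$ in the contour representation \eqref{eq:ext3k} of Theorem \ref{thm:domain2} and show that the resulting expression simplifies to \eqref{Eq-xi-2}. Two elementary facts drive the collapse. First, the Laurent expansions $\Gamma(s)^{-1} \sim (-1)^m(s+m)/m!$ and $(e^{2\pi is}-1)^{-1} \sim 1/(2\pi i(s+m))$ near $s=-m$ yield
\begin{equation*}
\lim_{s \to -m}\frac{1}{\Gamma(s)(e^{2\pi is}-1)} = \frac{(-1)^m m!}{2\pi i}.
\end{equation*}
In \eqref{eq:ext3k}, terms~3 and 4 carry $\Gamma(s)^{-1}$ without the compensating factor $(e^{2\pi is}-1)^{-1}$, so they vanish in this limit; only terms~1 and 2 survive, each acquiring the finite prefactor above in place of the singular one. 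Second, at $s=-m$ the factor $t^{s-1} = t^{-m-1}$ is single-valued, so each Hankel contour $H_{\epsilon,1}$ or $H_{\epsilon e^{-x},1}$ in the $t$-variable collapses to a closed loop around the origin (the rectilinear portions above and below the real axis cancel).

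Next I would deform the $t$-loop in term~1 from the constant radius $\epsilon$ to the $x$-dependent radius $\epsilon e^{-\Re x}$, so that terms~1 and 2 share the same inner contour $\oint_{|t|=\epsilon e^{-\Re x}}$. This deformation is legitimate because $\Re x$ remains bounded on $H_{\epsilon,1}$ and, by the inequalities in Lemma \ref{lm:est}, the integrand has no singularities in the annulus swept out (the standing assumption $g1 \neq 1$ is essential, with \eqref{eq:dist1} handling the case $g1=\infty$). I would then merge the outer $x$-contours using the elementary identity
\begin{equation*}
\int_{H_{\epsilon,1}} x^{u-1} h(x)\,dx + (e^{2\pi iu}-1)\int_1^\infty x^{u-1} h(x)\,dx = \int_H x^{u-1} h(x)\,dx,
\end{equation*}
obtained by gluing the two halves of $H$ past $|x|=1$. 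Rearranged as $\tfrac{1}{e^{2\pi iu}-1}\int_{H_{\epsilon,1}} + \int_1^\infty = \tfrac{1}{e^{2\pi iu}-1}\int_H$, this is precisely the combination sitting in front of terms~1 and 2; their fusion produces the single double integral in \eqref{Eq-xi-2}, while the remaining summand $\xi_{2,k}(u,-m;y,w;g)$ is already meromorphic in $u,y,w \in \mathbb{C}$ outside branch cuts by Lemma \ref{lm:xi_secondp}.

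For analyticity in $u, y, w \in \mathbb{C}$: the inner $t$-loop is compact, so evaluation by residue at $t=0$ replaces $e^{-wt}$ by a polynomial in $w$, imposing no constraint on $\Re w$; the outer integral over $H$ converges at $\Re x \to +\infty$ thanks to $e^{-(y+k)x}$ once $k$ is chosen so that $\Re(y+k)>0$, which is always achievable for a given $y$. The $u=-m$ case is handled symmetrically: terms~1 and 3 of \eqref{eq:ext3k} survive the limit $u \to -m$, the $x$-Hankel contours collapse analogously, and fusion of the $t$-integrations over $H_{\epsilon,1}$ and $[1,\infty)$ yields an integral representation with a compact $x$-loop of radius $\epsilon e^{-\Re t}$, removing the constraint on $\Re y$ by the same mechanism (alternatively, this case can be deduced directly from the first via the duality relation \eqref{eq:Tr2}).

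The main obstacle throughout is ensuring that the contour deformation is uniformly valid in $x$ (resp.~$t$): one must verify that for every point on the outer contour, the inner circle lies in the analyticity domain of the integrand, and that the resulting double integral converges absolutely. This is precisely where Lemma \ref{lm:est}(3)--(5) is invoked; once that is in place, the residue evaluation at the origin of the inner variable is what eliminates the upper bound on $\Re w$ (or $\Re y$) that constrained Theorem \ref{thm:domain2}, delivering the promised analytic continuation to the entire product of $\mathbb{C}$-planes minus appropriate branch cuts.
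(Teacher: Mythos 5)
Your argument is essentially the paper's proof, written out in full: the paper likewise starts from \eqref{eq:ext3k}, observes that at $s=-m$ the third and fourth terms vanish while the Hankel contours in $t$ of the first two terms collapse to small circles, fuses the remaining pieces into \eqref{Eq-xi-2}, and then notes that the compact $t$-loop removes the constraint on $\Re w$ while letting $k$ grow removes the one on $\Re y$. One caveat on your treatment of the second assertion: the ``symmetric'' direct route you describe first only yields a compact $x$-loop and hence removes the constraint on $\Re y$, but it does not touch the constraint $\Re w>\mu_\infty-1+k(\delta_{g\infty,\infty}-\delta_{g\infty,0})$ coming from $t\to+\infty$ (the $k$-shift in \eqref{eq:ext3k} acts only in the $y$-direction), so it falls short of continuation to all $w\in\mathbb{C}$; your parenthetical alternative via \eqref{eq:Tr2} (Corollary \ref{thm:trans3}) is the route the paper actually takes and is the one that completes the argument.
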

\begin{proof}
If $s=-m\in\mathbb{Z}_{\leq0}$, then by Theorem \ref{thm:domain2}, we see that the Hankel contour in the first and the second terms of \eqref{eq:ext3k} with respect to $t$ reduces to a small circle around the origin % and the line segments going to $+\infty$ vanish. 
and that the third and the fourth terms vanish.
Thus we obtain the integral representation.
The integral converges for any $w\in\mathbb{C}$. Since the analytic continuation is valid for $\Re y>\nu_\infty-k$ with arbitrary $k\in\mathbb{Z}_{\geq0}$, we have the first assertion.

The second assertion follows from Corollary \ref{thm:trans3}.
\end{proof}

\begin{example}\label{Exam-5-eta-xi}\ \ 
For $g=g_\eta,g_\xi$ in Example \ref{Exam-3-1}, we see that $g1\neq 1$. Hence, by Theorem \ref{thm:domain2}, we see that $\xi_D(u,s;y,w;g_\eta)$ is analytic for $u,s\in \mathbb{C}$, $\Re y>\nu_\infty$  and $\Re w>\mu_\infty-1$ with $\mu_\infty+\nu_\infty=1$. In the case when $(y,w)=(1,0)$, $\eta(u;s)=\xi_D(u,s;1,0;g_\eta)$ is analytic for $u,s\in \mathbb{C}$. 
Furthermore, when $(y,w)=(1,-1)$, 
we can define 
\begin{equation}
\widetilde{\xi}(u;s)=\xi_N(u,s;0,-1;g_\eta)=\xi_D(u,s;1,-1;g_\eta)\label{tildexi-xi}
\end{equation}
for $u,s\in \mathbb{C}$ with $\Re u<0$ and $\Re s<0$. In particular when $u=-k\in\mathbb{Z}_{\leq0}$, by Theorem \ref{AC-uyw}, we see that $\widetilde{\xi}(-k;s)$ can be analytically continued to $s\in\mathbb{C}$, which was already considered in \cite[Section 4]{Kaneko-Tsumura2015}. 

Also $\xi_D(u,s;y,w;g_\xi)$ is analytic for $u,s\in \mathbb{C}$, $\Re y>0$ and $\Re w>-1$. % with $\mu_\infty\in [0,1]$. 
In particular, $\xi(u;s)=\xi_D(u,s;1,0;g_\xi)$ is analytic for $u,s\in \mathbb{C}$. 
\end{example}

\begin{example}\label{Exam-5-6}\ \ 
Consider $\widetilde{\xi}(u;s)=\xi_D(u,s;1,-1;g_\eta)$. By \eqref{dual-1} with $(n,y,w,g)=(1,1,1,g_\eta)$, we obtain 
%$\xi_D(u-1,s;1,-1;g_\eta)=\xi_D(s-1,u;1,-1;g_\eta^{-1})$, 
%where we note $g_\eta^{-1}=g_\eta$. 
%Hence it follows from \eqref{tildexi-xi} that
\begin{equation}
\widetilde{\xi}(u-1,s)=\widetilde{\xi}(s-1,u), \label{dual-tilde-xi}
\end{equation}
which interpolates \eqref{1-5-2} (see Example \ref{Exam-6-1}). 
Note that from \eqref{eq:Tr3} with $(y,w)=(0,0)$, we have
$$\xi_D(u,s;0,0;g_\eta)=\xi_N(s,u;0,-1;g_\eta)=\xi_D(s,u;1,-1;g_\eta).$$
Therefore it follows from \eqref{dual-tilde-xi} that
\begin{equation}
\xi_D(u,s;0,0;g_\eta)=\xi_D(u-1,s+1;1,-1;g_\eta)=\widetilde{\xi}(u-1,s+1). \label{eq:Tr-xi}
\end{equation}
%\end{example}

%\begin{example}\label{Exam-5-6-2}
Let $g=g_{\xi}$ and $\xi(u;s):=\xi_D(u,s;1,0;g_\xi)$ in Example \ref{Exam-3-1}. Since $g_{\xi}^{-1}=
\begin{pmatrix} 0 & 1 \\ -1 & 1
\end{pmatrix},$ 
we have $g_{\xi}^{-1}T=1/(1-T)$ which satisfies \eqref{eq:def_cond}. 
Let $h=g_\xi^{-1}$. Then 
$hT=1/(1-T)$, namely, $h1=\infty$, $h\infty=0$ and 
\begin{equation*}
  h([1,+\infty])\cap[1,+\infty]=\{\infty \}= V(h).
\end{equation*}
Hence we obtain $\mu_\infty=0$ and $\nu_{1}=0$. 
Therefore, noting \eqref{eq:Tr2} with $(y,w,g)=(0,0,g_\xi)$, we can define 
\begin{align}
\check{\xi}(u;s):&=\xi_D(u,s;0,-1;g_\xi^{-1}) =-\xi_D(s,u;0,-1;g_\xi) %=-\frac{1}{\Gamma(u)}\int_0^{\infty} t^{u-1}{\Li_s(1-e^{-t})}dt. 
\label{def-check-xi}
\end{align}
for $\Re u<0$ and $\Re s<0$. 
%\begin{pmatrix} 1 & -1 \\ 1 & 0
%\end{pmatrix}$, %\in\GL_2(\mathbb{C})$. 
%Then \eqref{xi-dual-01} implies
%\begin{equation}
%\check{\xi}(u,s)=-\xi_D(s,u;0,-1;g_\xi^{-1}). \label{def-check-xi-2}
%\end{equation}
Setting $(n,y,w,g)=(1,1,0,g_\xi)$ in \eqref{dual-3} and noting \eqref{xi-D-N}, we obtain 
\begin{align*}
& \xi_{D}(u-1,s;1,0;g_\xi)=\xi_{D}(s-1,u;0,-1;g_\xi^{-1}).
\end{align*}
%Since $\xi_D(u,s;1,0;g_\xi)=\xi(u;s)$, 
Therefore we see from \eqref{def-check-xi} that
\begin{align}
& \xi(u-1;s)=\check{\xi}(s-1;u), \label{xi-dual-2}
\end{align}
which also interpolates \eqref{1-5-2} (see Example \ref{Exam-6-gene}). 
The symbol $\check{\xi}$ is derived from this fact. From this relation, 
$\check{\xi}(s;u)$ is analytic for $u,s\in \mathbb{C}$. 
\if0
On the other hand, setting $(y,w,g)=(0,0,g_\xi)$ in Theorem \ref{thm:trans1}, we have
\begin{equation*}
\xi_D(u,s;1,-1;g_\xi)-\xi_D(u,s;1,0;g_\xi)=\xi_D(u,s;0,-1;g_\xi),
\end{equation*}
namely,
\begin{equation}
\xi_D(u,s;1,-1;g_\xi)=\xi(u;s)-\check{\xi}(s,u)=\xi(u,s)-\xi(u-1,s+1). \label{B-C-interpolation}
\end{equation}
We will confirm that this is an interpolation of \eqref{1-5-3} (see Example \ref{Exam-6-4}). 
\fi
\end{example}

\begin{example}\label{Exam-5-6-3}
%By \eqref{eq:Tr3}, we have
%\begin{equation}
%  \xi_{D}(u,s;y,w;g)=-\frac{1}{\det g}\xi_{D}(s,u;w+1,y-1;g^{-1}).  
%\end{equation}
From \eqref{dual-1} with $n=1$, we obtain
\begin{multline*}
j_D(g^{-1},D_y^{-1})D_w^{-2}(D_u^{-1}-y+1)\xi_D(u,s;y,w;g)
\\
=
\Bigl(-\frac{1}{\det g}\Bigr)^2j_D(g,D_w^{-1})D_y^{-2}(D_s^{-1}-w+1)
\xi_D(s,u;w,y;g^{-1}).
\end{multline*}
Substituting \eqref{eq:Tr3} into the right-hand side and noting $j_D(g,D_w^{-1})w=wj_D(g,D_w^{-1})-cD_w^{-1}$, we have
%Therefore we have
\begin{multline}
\begin{aligned}
& j_D(g^{-1},D_y^{-1})D_w^{-2}(D_u^{-1}-y+1)\xi_D(u,s;y,w;g)
\\
  &=
\Bigl(-\frac{1}{\det g}\Bigr)^2
(-\det g)
j_D(g,D_w^{-1})D_y^{-2}(D_s^{-1}-w+1)
\xi_D(u,s;y+1,w-1;g) \\
%j_D(g^{-1},D_y^{-1})D_w^{-2}(D_u^{-1}-y+1)\xi_D(u,s;y,w;g)
%\\
  &=
  -\frac{1}{\det g}
  D_y^{-2}((D_s^{-1}-w+1)
  j_D(g,D_w^{-1})+cD_w^{-1})
  \xi_D(u,s;y+1,w-1;g)\\
  &=
  -\frac{1}{\det g}
  D_y^{-2}(D_s^{-1}-w+1)
  j_D(g,D_w^{-1})\xi_D(u,s;y+1,w-1;g)
  \\
  &\qquad
  -\frac{1}{\det g}
  cD_y^{-2}D_w^{-1}
  \xi_D(u,s;y+1,w-1;g).\label{eq:1.7}
\end{aligned}
\end{multline}
Moreover, %by \eqref{eq:rem-4-2}, we have
%\begin{equation}\label{eq:1.7-00}
%j_D(g,D_w^{-1})\xi_{D}(u,s;y,w;g)=
%j_N(g,D_w^{-1})\xi_{D}(u,s;y+1,w;g)+y^{-u}w^{-s}.
%\end{equation}
substituting \eqref{eq:rem-4-2} into the right-hand side of \eqref{eq:1.7}, we obtain
\begin{multline}\label{eq:B=C+C}
j_D(g^{-1},D_y^{-1})D_w^{-2}(D_u^{-1}-y+1)\xi_D(u,s;y,w;g)
\\
\begin{aligned}
  &=
  -\frac{1}{\det g}
  D_y^{-2}(D_s^{-1}-w+1)
  (j_N(g,D_w^{-1})\xi_{D}(u,s;y+2,w-1;g)+(y+1)^{-u}(w-1)^{-s})
  \\
  &\qquad
  -\frac{1}{\det g}
  cD_y^{-2}D_w^{-1}
  \xi_D(u,s;y+1,w-1;g).
\end{aligned}
\end{multline}
In particular, setting $(y,w,g_\eta)=(1,1,g_\eta)$, we obtain
\begin{equation*}
    \xi_D(u-1,s;1,-1;g_\eta)=\xi_D(u,s-1;1,0;g_\eta)-\xi_D(u,s-1;1,-1;g_\eta),
\end{equation*}
namely,
\begin{equation}
    \eta(u,s-1)=\widetilde{\xi}(u,s-1)+\widetilde{\xi}(u-1,s). \label{FR-BC}
\end{equation}
This can be regarded as an interpolation formula of \eqref{1-5-3} (see Example \ref{Exam-6-4}). 
\end{example}

\begin{remark}
  If $g1=1$, then the analytic properties of $\xi_D(u,s;y,w;g)$ in $u,s$ drastically change because the two paths of the integral can not be replaced by the Hankel contours 
due to the singularites of the integrand near the origin in $t,x$.
In this case, by use of the technique employed in the case of multiple zeta functions (see \cite{Komori2010}), we see that $\xi_D(u,s;y,w;g)$ has possible singularities on the hyperplanes $s+u\in\mathbb{Z}$.
\end{remark}

%%%%%%%%%%%%%%%%%%%%%%%%%%%%%%%%%%%%%%%%%%%%%%%%%%%%%%%%%%%%%%%%%
\section{Poly-Bernoulli polynomials associated with $\GL_2(\mathbb{C})$}\label{sec-6}
%%%%%%%%%%%%%%%%%%%%%%%%%%%%%%%%%%%%%%%%%%%%%%%%%%%%%%%%%%%%%%%%

In this section, let $g \in \GL_2(\mathbb{C})$ satisfying \eqref{eq:def_cond} and $g1\neq 1$. % \in \{1,\infty\}$. %and $1\not\in \{g1,g\infty\}$. 
We generalize the poly-Bernoulli polynomials from the result in Theorem \ref{AC-uyw}.

\begin{definition}\label{g-Bernoulli}\ \ 
For $u,y,w\in \mathbb{C}$ except for appropriate branch cuts, we define the poly-Bernoulli polynomials $\{\bb_m^{(u)}(y,w;g)\}$ associated with $g$ by
\begin{equation}
\bb_m^{(u)}(y,w;g)=\xi_D(u,-m;y,w;g)\quad (m\in \mathbb{Z}_{\geq 0}). \label{eq:val-xi1}
\end{equation}
\end{definition}

In particular when $g1\neq\infty$, 
%when $g1\not\in \{1,\infty\}$, 
it follows from Lemma \ref{lm:IRLT} and Theorem \ref{thm:domain3} that
\begin{equation}
\label{eq:xi-rep}
  \xi_D(u,s;y,w;g)=\frac{1}{\Gamma(s)(e^{2\pi is}-1)}
  \int_H 
  {t^{s-1}e^{-wt}}\frac{\Phi(ge^t,u,y) }{j_D(g,e^t)}dt
\end{equation}
for $\Re u>\nu_1$, $s\in\mathbb{C}$, $\Re y>\nu_\infty$ and $\Re w>\mu_\infty-1$. 
Let $s \to -m\in \mathbb{Z}_{\leq 0}$. Then we obtain the following result. % from Theorem \ref{AC-uyw}.

\begin{theorem}\label{Th-6-2}\ 
%Let $g \in \GL_2(\mathbb{C})$ satisfying \eqref{eq:def_cond} and $g1\not\in \{1,\infty\}$. 
If $g1\neq\infty$, then % with $\Re u>\nu_1$, $\Re w>\mu_\infty-1$ and $\Re y>\nu_\infty$,
\begin{equation}
e^{wt}\frac{\Phi(ge^{-t},u,y)}{j_D(g,e^{-t})}=\sum_{m=0}^\infty \bb_m^{(u)}(y,w;g)\frac{t^m}{m!}. \label{P-Ber-g}
\end{equation}
for $u,y,w\in \mathbb{C}$ except for appropriate branch cuts.
$\bb_m^{(u)}(y,w;g)$ is a polynomial in $w$.
\end{theorem}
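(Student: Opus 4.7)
The plan is to evaluate the Hankel-contour representation \eqref{eq:xi-rep} at $s=-m$. The key observation is that under the standing hypothesis (condition \eqref{eq:def_cond} together with $g1\neq 1$) and the extra assumption $g1\neq\infty$, the integrand
\[
F(t):=e^{-wt}\frac{\Phi(ge^t,u,y)}{j_D(g,e^t)}
\]
is holomorphic in a neighborhood of $t=0$: the denominator $j_D(g,1)=c+d$ is nonzero because $g1\neq\infty$, and $\Phi(ge^t,u,y)$ is analytic at $t=0$ by Lemma \ref{lm:IRLT}, since \eqref{eq:def_cond} combined with $g1\notin\{1,\infty\}$ rules out $g1\in[1,+\infty)$.

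First I would collapse the Hankel contour in \eqref{eq:xi-rep} at $s=-m$ to a small positively oriented circle around the origin, which yields
\[
\int_H t^{-m-1}F(t)\,dt=2\pi i\,\mathrm{Res}_{t=0}\bigl(t^{-m-1}F(t)\bigr)=\frac{2\pi i}{m!}F^{(m)}(0).
\]
Next I would expand the prefactor near $s=-m$: $\Gamma(s)$ has a simple pole with residue $(-1)^m/m!$ and $e^{2\pi is}-1\sim 2\pi i(s+m)$, so $1/(\Gamma(s)(e^{2\pi is}-1))\to (-1)^m m!/(2\pi i)$ as $s\to -m$. Combining these gives
\[
\bb_m^{(u)}(y,w;g)=\xi_D(u,-m;y,w;g)=(-1)^m F^{(m)}(0).
\]
The generating-function identity \eqref{P-Ber-g} then follows from the observation that $F(-t)=e^{wt}\Phi(ge^{-t},u,y)/j_D(g,e^{-t})$ and that its Taylor expansion at $t=0$ is
\[
F(-t)=\sum_{m=0}^\infty(-1)^m F^{(m)}(0)\frac{t^m}{m!}=\sum_{m=0}^\infty\bb_m^{(u)}(y,w;g)\frac{t^m}{m!}.
\]

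Polynomiality of $\bb_m^{(u)}(y,w;g)$ in $w$ is then immediate from the Leibniz rule applied to $F(t)=e^{-wt}G(t)$, with $G(t)=\Phi(ge^t,u,y)/j_D(g,e^t)$ independent of $w$: the identity $F^{(m)}(0)=\sum_{k=0}^m\binom{m}{k}(-w)^k G^{(m-k)}(0)$ exhibits $\bb_m^{(u)}(y,w;g)$ as a polynomial in $w$ of degree at most $m$. The main step requiring care is the contour-deformation in the first display above, namely checking that no singularities of $F$ lie between the Hankel contour and the origin. Both potential sources of singularities, zeros of $j_D(g,e^t)$ (equivalently $e^t$ equal to the pole $-d/c=g\infty$ of the M\"obius transform) and the branch locus $ge^t\in[1,+\infty)$ of the Lerch transcendent, are ruled out on a neighborhood of the half-line $[0,+\infty)$ by the standing configuration condition \eqref{eq:def_cond} together with $g1\notin\{1,\infty\}$. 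Once this routine verification is in hand, the remaining steps are entirely standard.
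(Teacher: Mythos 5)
Your proposal is correct and follows essentially the same route as the paper, which obtains \eqref{P-Ber-g} precisely by letting $s\to -m$ in the Hankel-contour representation \eqref{eq:xi-rep} (valid under $g1\notin\{1,\infty\}$ via Theorem \ref{thm:domain3}): the prefactor limit $(-1)^m m!/(2\pi i)$, the collapse of the contour to the residue at $t=0$, and the resulting identification $\bb_m^{(u)}(y,w;g)=(-1)^mF^{(m)}(0)$ are exactly the intended computation, and your Leibniz argument gives the polynomiality in $w$. The only point worth noting is that \eqref{eq:xi-rep} is stated only for $\Re u>\nu_1$, $\Re y>\nu_\infty$, $\Re w>\mu_\infty-1$, so extending the identity to all $u,y,w$ (modulo branch cuts) requires invoking the analytic continuation of Theorem \ref{AC-uyw} together with analyticity of both sides, a step the paper likewise leaves implicit.
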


\begin{example}\label{Exam-6-1}\ \ We consider the poly-Bernoulli polynomials defined by 
\begin{equation}
e^{-wt}\frac{\Li_{u}(1-e^{-t})}{1-e^{-t}}=\sum_{m=0}^\infty B_m^{(u)}(w)\frac{t^m}{m!}\quad (u\in \mathbb{C}) \label{P-Ber-poly}
\end{equation}
(see Coppo--Candelpergher \cite{CC2010} in the case $u\in \mathbb{Z}$). %; also Bayad--Hamahata\cite{BH2011-1}).\ 
We define $B_m^{(u)}:=B_m^{(u)}(0)$ and $C_m^{(u)}:=B_m^{(u)}(1)$ which are generalizations of \eqref{1-1} and \eqref{1-2}. Furthermore, we have $B_m^{(1)}(w)=B_m(1-w)=(-1)^mB_m(w)$, where $B_m(w)$ is the classical Bernoulli polynomial. 
From Example \ref{Exam-3-1}, for $g=g_{\eta}=
\begin{pmatrix} -1 & 1 \\ 0 & 1
\end{pmatrix}$, %\in\GL_2(\mathbb{C})$, 
we see that the left-hand side of \eqref{P-Ber-g} is equal to that of \eqref{P-Ber-poly} with replacing $-w$ by $w$. 
Hence we have $\bb_m^{(u)}(1,w;g_\eta)=B_m^{(u)}(-w)$. Note that $\bb_m^{(k)}(1,w;g_\eta)$ $(k\in \mathbb{Z})$ coincides with the poly-Bernoulli polynomial defined by Bayad and Hamahata in \cite{BH2011-1}. 
We emphasize that 
\begin{equation}
\begin{split}
\eta(u;-m)&=\xi_D(u,-m;1,0;g_\eta)=\bb_m^{(u)}(1,0;g_\eta)=B_m^{(u)},\\
\tilde{\xi}(u;-m)&=\xi_D(u,-m;1,-1;g_\eta)=\bb_m^{(u)}(1,-1;g_\eta)=C_m^{(u)}
\end{split}
\label{eta-xi-vals}
\end{equation}
for $m\in\mathbb{Z}_{\geq 0}$. Hence, from \eqref{dual-tilde-xi}, we obtain \eqref{1-5-2}. 
Further, from \eqref{eq:Tr-xi}, we obtain 
\begin{equation}
\bb_m^{(u)}(0,0;g_\eta)=\bb_{m-1}^{(u-1)}(1,-1;g_\eta)=C_{m-1}^{(u-1)}\quad (m\in\mathbb{Z}_{\geq 1}). \label{C-vals}
\end{equation}
Therefore it follows from \eqref{P-Ber-g} with $(y,w,g)=(0,0,g_\eta)$ that
\begin{align}
\Phi(1-e^{-t},u,0)&=\Li_{u}(1-e^{-t}) =\sum_{m=1}^\infty C_{m-1}^{(u-1)}\frac{t^{m}}{m!}.\label{gf-C-2}
\end{align}
\end{example}

Combining \eqref{Eq-xi-2} with $k=0$ and \eqref{eq:val-xi1}, we obtain the following.

\begin{theorem}\ \ %Assume $g1\not\in \{1,\infty\}$. % and $1\not\in \{g1,g\infty\}$. 
For $y,w\in \mathbb{C}$, 
\begin{equation}
\frac{e^{wt}e^{yx}}{j_D(g,e^{-t})}
  \frac{1}{1-(ge^{-t}) e^{x}}=\sum_{k=0}^\infty \sum_{l=0}^\infty \bb_k^{(-l)}(y,w;g)\frac{t^k x^l}{k!l!}. \label{gene-Bernoulli}
\end{equation}
$\bb_k^{(-l)}(y,w;g)$ is a polynomial in $y$ and $w$.
\end{theorem}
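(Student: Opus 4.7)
The plan is to identify $\bb_k^{(-l)}(y,w;g) = \xi_D(-l,-k;y,w;g)$ with the Taylor coefficients of the left-hand side of \eqref{gene-Bernoulli} via a double residue computation based on Theorem \ref{AC-uyw}. I will first specialize the integral representation \eqref{Eq-xi-2} by setting $s=-k$ and taking the free integer parameter there (also named $k$) to be $0$; the boundary term $\xi_{2,0}(u,-k;y,w;g)$ vanishes since it is an empty sum, leaving
\[
\xi_D(u,-k;y,w;g) = \frac{(-1)^k k!}{2\pi i}\frac{1}{\Gamma(u)(e^{2\pi iu}-1)}\int_H dx\int_{|t|=\epsilon e^{-\Re x}} dt\, \frac{t^{-k-1}x^{u-1}e^{-wt}e^{-yx}}{j_D(g,e^t)(1-(ge^t)e^{-x})}.
\]
This expression is entire in $u$, so I will then send $u\to -l$ for $l\in\mathbb{Z}_{\geq 0}$.

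In that limit, $\Gamma(u)$ has a simple pole at $u=-l$ with residue $(-1)^l/l!$ and $e^{2\pi iu}-1$ has a simple zero there with leading coefficient $2\pi i$, so the prefactor $1/[\Gamma(u)(e^{2\pi iu}-1)]$ extends analytically and takes the value $(-1)^l l!/(2\pi i)$. At $u=-l$ the factor $x^{u-1}=x^{-l-1}$ is single-valued; the two straight legs of the Hankel contour cancel, reducing it to a counterclockwise circle around the origin, while the $t$-contour is already such a circle. Applying the Cauchy residue theorem to each contour will yield
\[
\bb_k^{(-l)}(y,w;g) = (-1)^{k+l}\,k!\,l!\cdot [t^k x^l]\,F(t,x),\qquad F(t,x) := \frac{e^{-wt}e^{-yx}}{j_D(g,e^t)(1-(ge^t)e^{-x})}.
\]
The change of variables $(t,x)\mapsto(-t,-x)$ turns $F(t,x)$ into $e^{wt}e^{yx}/[j_D(g,e^{-t})(1-(ge^{-t})e^x)]$, and the extra sign $(-1)^{k+l}$ is absorbed by $(-t)^k(-x)^l$, establishing \eqref{gene-Bernoulli}.

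For the polynomiality, I will use the algebraic identity
\[
\frac{1}{j_D(g,e^{-t})(1-(ge^{-t})e^x)} = \frac{1}{j_D(g,e^{-t}) - j_N(g,e^{-t})e^x},
\]
whose right-hand side is analytic and nonzero at $(t,x)=(0,0)$ precisely because $g1\neq 1$ forces $j_D(g,1)\neq j_N(g,1)$. The dependence of the generating function on $y,w$ is then confined to the analytic factor $e^{wt}e^{yx} = \sum_{a,b\geq 0}(y^a w^b/(a!b!))x^a t^b$, and extracting $[t^k x^l]$ picks up only finitely many terms (those with $a\leq l$, $b\leq k$), giving a polynomial in $(y,w)$ of degree at most $l$ in $y$ and at most $k$ in $w$.

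The hard part will be the justification of the contour collapses in the second step. The Hankel contour in $x$ extends to $+\infty$, so reducing it to a small circle around the origin requires both the cancellation of the two straight legs (immediate from single-valuedness of $x^{-l-1}$) and the absence of stray poles of the integrand along the deformation. The latter should follow from the estimates used in the proof of Theorem \ref{AC-uyw}: the coordinated choice of $t$-circle radius $\epsilon e^{-\Re x}$ ensures $ge^t\neq e^x$, precluding poles of $1/(1-(ge^t)e^{-x})$, so both contour integrals genuinely reduce to $2\pi i$ times the corresponding Taylor coefficient at the origin.
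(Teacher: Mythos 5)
Your proposal is correct and follows the paper's own route: the proof given there is exactly ``combine the Hankel-contour representation \eqref{Eq-xi-2} with the shift parameter equal to $0$ (so that $\xi_{2,0}$ is an empty sum) with the definition $\bb_m^{(u)}(y,w;g)=\xi_D(u,-m;y,w;g)$,'' so that at $u=-l$, $s=-k$ both contours collapse to small circles and a double residue identifies $\bb_k^{(-l)}(y,w;g)$ with $(-1)^{k+l}k!\,l!$ times the Taylor coefficient at the origin, which is precisely the computation you carry out. Your evaluation of $1/[\Gamma(u)(e^{2\pi iu}-1)]$ at $u=-l$, the sign bookkeeping under $(t,x)\mapsto(-t,-x)$, and the polynomiality argument via $j_D(g,1)\neq j_N(g,1)$ (equivalently $g1\neq 1$) are all sound.
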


$\xi_D(u,s;y,w;g)$ or $\bb_m^{(u)}(y,w;g)$
satisfies simple transformation formulas for $g=hf$ with a general $h\in\GL_2(\mathbb{C})$ and a special $f\in\GL_2(\mathbb{C})$.
\begin{theorem}
\label{thm:simple_trans}
Let $h\in\GL_2(\mathbb{C})$ and $\alpha\in\mathbb{C}\setminus\{0\}$.
\begin{enumerate}
\item For $f=\begin{pmatrix}
    \alpha & 0 \\ 0 & \alpha
  \end{pmatrix}$,
  \begin{equation}
    \xi_D(u,s;y,w;hf)=\frac{1}{\alpha}\xi_D(u,s;y,w;h).
  \end{equation}
\item For $f=\begin{pmatrix}
    0 & 1 \\ 1 & 0
  \end{pmatrix}$, which corresponds to the inversion $T\mapsto 1/T$,
  \begin{equation}\label{Inverse}
    \bb_m^{(u)}(y,w;hf)=(-1)^m \bb_m^{(u)}(y,-w-1;h).
  \end{equation}
\end{enumerate}
\end{theorem}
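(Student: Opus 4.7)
The plan is to verify both parts by direct substitution into the integral representation \eqref{eq:IR1} for part (1) and into the generating function \eqref{P-Ber-g} for part (2), using the explicit form of the matrix product $hf$ in each case.

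For part (1), I would first compute
\begin{equation*}
hf=\begin{pmatrix} a & b\\ c & d\end{pmatrix}\begin{pmatrix}\alpha & 0\\ 0 & \alpha\end{pmatrix}=\begin{pmatrix} a\alpha & b\alpha\\ c\alpha & d\alpha\end{pmatrix},
\end{equation*}
so that the M\"obius transformation $(hf)T=(a\alpha T+b\alpha)/(c\alpha T+d\alpha)=hT$ is unchanged while $j_D(hf,T)=\alpha j_D(h,T)$. Substituting these two identities into the integrand of \eqref{eq:IR1} produces the overall factor $1/\alpha$ and leaves the rest of the integral intact, giving the claim. I would note in passing that $hf$ inherits condition \eqref{eq:def_cond} from $h$ trivially since $(hf)([1,+\infty])=h([1,+\infty])$.

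For part (2), I would first compute
\begin{equation*}
hf=\begin{pmatrix} a & b\\ c & d\end{pmatrix}\begin{pmatrix} 0 & 1\\ 1 & 0\end{pmatrix}=\begin{pmatrix} b & a\\ d & c\end{pmatrix},
\end{equation*}
so that $(hf)T=h(1/T)$ and $j_D(hf,T)=dT+c$. Substituting $T=e^{-t}$ gives
\begin{equation*}
(hf)e^{-t}=he^{t},\qquad j_D(hf,e^{-t})=de^{-t}+c=e^{-t}j_D(h,e^{t}).
\end{equation*}
Hence the left-hand side of the generating function \eqref{P-Ber-g} for $hf$ becomes
\begin{equation*}
e^{wt}\frac{\Phi((hf)e^{-t},u,y)}{j_D(hf,e^{-t})}=e^{(w+1)t}\frac{\Phi(he^{t},u,y)}{j_D(h,e^{t})}.
\end{equation*}
On the other hand, replacing $t$ by $-t$ and $w$ by $-w-1$ in the generating function \eqref{P-Ber-g} for $h$ yields
\begin{equation*}
e^{(w+1)t}\frac{\Phi(he^{t},u,y)}{j_D(h,e^{t})}=\sum_{m=0}^{\infty}(-1)^{m}\bb_{m}^{(u)}(y,-w-1;h)\frac{t^{m}}{m!},
\end{equation*}
and comparing the two generating series coefficient-by-coefficient gives \eqref{Inverse}.

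There is no real obstacle here beyond bookkeeping; the only point requiring mild care is checking that both $hf$ and $h$ satisfy the standing assumption \eqref{eq:def_cond} and the non-cusp condition $g1\neq 1$ required for Theorem \ref{Th-6-2} to apply, so that the generating function \eqref{P-Ber-g} is actually available on both sides of the asserted equality. In the inversion case one should also verify that the analytic extensions in $w$ given by Theorem \ref{AC-uyw} are consistent under the reflection $w\mapsto -w-1$, which is immediate since $\bb_{m}^{(u)}(y,w;g)$ is polynomial in $w$ by Theorem \ref{Th-6-2}.
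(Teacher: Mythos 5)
Your proof is correct, and part (1) is exactly the paper's argument (a one-line consequence of the definition). For part (2) you take a genuinely different route: you work at the level of the generating function \eqref{P-Ber-g} of Theorem \ref{Th-6-2}, using the identities $(hf)e^{-t}=he^{t}$ and $j_D(hf,e^{-t})=e^{-t}j_D(h,e^{t})$ and then the substitution $t\mapsto -t$, $w\mapsto -w-1$ in the series. The paper instead performs the equivalent change of variables $v=-t$ directly inside the Hankel/circle contour representation \eqref{Eq-xi-2} of $\xi_D(u,-m;y,w;hf)$ from Theorem \ref{AC-uyw}, using the cocycle relation $j_D(hf,e^t)=j_D(h,e^{-t})j_D(f,e^t)$ with $j_D(f,e^t)=e^t$. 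The two computations are morally the same reflection $t\to-t$, but they are not interchangeable in scope: Theorem \ref{Th-6-2} is only available under the extra hypothesis $g1\neq\infty$, so your argument silently excludes the case $h1=\infty$ (note $(hf)1=h1$, so the restriction hits both sides simultaneously), whereas the paper's contour-integral representation requires only the standing assumption $g1\neq 1$ of Section \ref{sec-6} and hence covers, e.g., $h=g_\xi^{-1}$. On the other hand, your route has the advantage of making the polynomial dependence on $w$ and the consistency of the reflection $w\mapsto -w-1$ completely transparent, which you correctly flag at the end. If you want your proof to match the full generality of the theorem, either add the hypothesis $h1\neq\infty$ or fall back on \eqref{Eq-xi-2} as the paper does.
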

\begin{proof}
  The first statement follows directly from the definition.
We show the second statement.
\begin{multline*}
    \bb_m^{(u)}(y,w;hf)
\\
\begin{aligned}
    &=\xi_D(u,-m;y,w;hf)
    \\
    &=
    \frac{(-1)^m m!}{2\pi i}\frac{1}{\Gamma(u)(e^{2\pi iu}-1)}
    \int_{H} dx
    \int_{|t|=\epsilon e^{{}-\Re x}} dt 
    \frac{t^{-m-1}x^{u-1}e^{-wt}e^{-yx}}{j_D(hf,e^t)}
    \frac{1}{1-(hfe^t) e^{-x}}
    \\
    &=
    \frac{(-1)^m m!}{2\pi i}\frac{1}{\Gamma(u)(e^{2\pi iu}-1)}
    \int_{H} dx
    \int_{|t|=\epsilon e^{{}-\Re x}} dt 
    \frac{t^{-m-1}x^{u-1}e^{-wt}e^{-yx}}{j_D(h,e^{-t})j_D(f,e^t)}
    \frac{1}{1-(he^{-t}) e^{-x}}
    \\
    &=
    \frac{(-1)^m m!}{2\pi i}\frac{1}{\Gamma(u)(e^{2\pi iu}-1)}
    \int_{H} dx
    \int_{|t|=\epsilon e^{{}-\Re x}} dt 
    \frac{t^{-m-1}x^{u-1}e^{-(w+1)t}e^{-yx}}{j_D(h,e^{-t})}
    \frac{1}{1-(he^{-t}) e^{-x}}
    \\
    &=(-1)^{m+1-1}
    \frac{(-1)^m m!}{2\pi i}\frac{1}{\Gamma(u)(e^{2\pi iu}-1)}
    \\
    &\qquad\times
    \int_{H} dx
    \int_{|v|=\epsilon e^{{}-\Re x}} dv 
    \frac{v^{-m-1}x^{u-1}e^{(w+1)v}e^{-yx}}{j_D(h,e^v)}
    \frac{1}{1-(he^v) e^{-x}}
    \\
    &=(-1)^m \bb_m^{(u)}(y,-w-1;h),
  \end{aligned}
\end{multline*}
where we changed variables as $v=-t$.
\end{proof}

\if0
\begin{example}\label{Exam-6-3} 
%By \eqref{eta-xi} and Example \ref{Exam-3-1}, we have 
%$\xi_D(k,s;1,0;g_\eta)=\eta(k;s)$ for $k\in \mathbb{Z}$ and $\mu_\infty=\nu_\infty=0$. 
Let $w=0$ and $y=1$. Then, 
by Example \ref{Exam-6-1} and \eqref{eq:val-xi1}, we see that $\eta(k;-m)=B_m^{(k)}$ which was given in \cite[Theorems 2.3 and 4.4]{Kaneko-Tsumura2015}. 
\end{example}
\fi
\begin{example}\label{Exam-6-gene} \ Consider $g_\eta$ and $g_\xi$ in Example \ref{Exam-3-1}. Since $g_\xi=g_\eta f$ for $f=\begin{pmatrix}
    0 & 1 \\ 1 & 0
  \end{pmatrix}$, we have from \eqref{Inverse} that 
\begin{equation}\label{Inverse-B}
    \bb_m^{(u)}(y,w;g_\xi)=(-1)^m \bb_m^{(u)}(y,-w-1;g_\eta).
\end{equation}
Therefore, from \eqref{eta-xi-vals}, we obtain %\eqref{1-10} with $r=1$, namely
\begin{equation}
\xi(u;-m)=\bb_m^{(u)}(1,0;g_\xi)=(-1)^m \bb_m^{(u)}(1,-1;g_\eta)=(-1)^m C_m^{(u)} \label{xi-vals}
\end{equation}
for $m\in \mathbb{Z}_{\geq 0}$, which includes \eqref{1-10}. Hence, by \eqref{xi-dual-2} and \eqref{1-5-2}, we obtain 
\begin{equation}
\check{\xi}(-l;-m)=\xi(-m-1;-l+1)=(-1)^{l-1}C_{l-1}^{(-m-1)}=(-1)^{l-1}C_m^{(-l)}\label{Ber-relation-2}
\end{equation}
for $l\in \mathbb{Z}_{\geq 1}$ and $m\in \mathbb{Z}_{\geq 0}$. It follows from \eqref{xi-vals} and \eqref{Ber-relation-2} that \eqref{xi-dual-2} is an interpolation formula of \eqref{1-5-3}.
\if0
Moreover, using the duality relation \eqref{Dual-F-4} (which will be proved later), we can compute
\begin{align*}
\bb_k^{(-l)}(y,w;g_\eta)& =\bb_l^{(-k)}(w+1,y-1;g_\eta) \\
  & =(-1)^l \bb_l^{(-k)}(w+1,-y;g_\xi)=(-1)^{l+1}\bb_k^{(-l)}(-y+1,w;g_\xi^{-1}).
\end{align*}
Combining these results, we obtain 
\begin{equation}\label{Ber-relation}
\bb_k^{(-l)}(y,w;g_\eta)=(-1)^k \bb_k^{(-l)}(y,-w-1;g_\xi)=(-1)^{l+1}\bb_k^{(-l)}(-y+1,w;g_\xi^{-1}).
\end{equation}
Hence, by \eqref{def-check-xi}, we have
\begin{equation}
\check{\xi}(-l;-m)=\bb_m^{(-l)}(0,-1;g_\xi^{-1})=(-1)^{l+1} \bb_m^{(-l)}(1,-1;g_\eta)=(-1)^{l+1}C_m^{(-l)}\quad (l,m\in \mathbb{Z}_{\geq 0}). \label{Ber-relation-2}
\end{equation}
Therefore, using \eqref{xi-dual-2} and \eqref{1-5-2}, we obtain
\begin{equation}
{\xi}(-k-1;-m)=\check{\xi}(-m-1;-k)=(-1)^mC_k^{(-m-1)}=(-1)^mC_{m}^{(-k-1)} \label{Ber-relation-3}
\end{equation}
for $k,m\in \mathbb{Z}_{\geq 0}$, 
while Arakawa and Kaneko \cite{AK1999} defined $\xi(k;s)$ only for $k\in \mathbb{Z}_{\geq 1}$ and considered \eqref{xi-vals}.
\fi
\end{example}

\begin{theorem}[Difference relations]
\label{thm:DR}
%Let $g \in \GL_2(\mathbb{C})$ satisfying \eqref{eq:def_cond}, $g1\not\in \{1,\infty\}$ and $1\not\in \{g1,g\infty\}$. 
%Assume $g1\not\in \{1,\infty\}$. 
For $g=
\begin{pmatrix} a & b \\ c & d
\end{pmatrix}$, 
  \begin{multline}
    a\bb_m^{(u)}(y+1,w-1;g)+b\bb_m^{(u)}(y+1,w;g)\\=
    c\bb_m^{(u)}(y,w-1;g)+d\bb_m^{(u)}(y,w;g)-y^{-u}w^{-s}
\label{eq:B-C}
  \end{multline}
holds for $u,y,w\in\mathbb{C}$ except for appropriate branch cuts. %, $\Re w>\mu_\infty-1$ and $\Re y>\nu_\infty$, 
\end{theorem}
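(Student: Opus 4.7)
My plan is to derive the difference relation for $\bb_m^{(u)}$ directly from the functional relation \eqref{eq:rem-4-2} (Theorem \ref{thm:trans1}) by specialization at $s=-m$, using the definition $\bb_m^{(u)}(y,w;g)=\xi_D(u,-m;y,w;g)$ in Definition \ref{g-Bernoulli} together with the auxiliary identity $\xi_N(u,s;y,w;g)=\xi_D(u,s;y+1,w;g)$ from \eqref{xi-D-N}.

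First, I would start from the expanded form of Theorem \ref{thm:trans1}:
\begin{equation*}
a\xi_{D}(u,s;y+1,w-1;g)+b\xi_{D}(u,s;y+1,w;g)=c\xi_{D}(u,s;y,w-1;g)+d\xi_{D}(u,s;y,w;g)-y^{-u}w^{-s},
\end{equation*}
which is established in a region where all four $\xi_D$ terms are simultaneously defined by their original integral representations (say, $\Re u>\nu_1$, $\Re s>\mu_1$, $\Re y$, $\Re w$ sufficiently large).

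Next I would invoke Theorem \ref{AC-uyw}: since $g$ satisfies $g1\neq 1$, each $\xi_D(u,s;y,w;g)$ appearing above admits an analytic continuation to $s=-m\in\mathbb{Z}_{\leq 0}$ for $u,y,w\in\mathbb{C}$ (away from branch cuts). The right-hand term $y^{-u}w^{-s}$ is likewise entire in $s$ and continues to $w^{m}$ at $s=-m$. By the uniqueness of analytic continuation, the identity persists at $s=-m$. Setting $s=-m$ and rewriting each $\xi_D(u,-m;\cdot,\cdot;g)$ as $\bb_m^{(u)}(\cdot,\cdot;g)$ then yields \eqref{eq:B-C} (with $w^{-s}$ understood as $w^{m}$, a polynomial in $w$).

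I expect no substantive obstacle: the proof is essentially bookkeeping, with the only nontrivial input being the global analytic continuation in $s$ supplied by Theorem \ref{AC-uyw}, which guarantees that the specialization $s\mapsto -m$ is legitimate for arbitrary $y,w\in\mathbb{C}$ (modulo branch cuts inherited from $\xi_{2,k}$). The mild caveat of the statement, "except for appropriate branch cuts," simply reflects those branch loci of the continuation; on the complement, the identity holds as a polynomial identity in $w$ and an analytic one in $u,y$.
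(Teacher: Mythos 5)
Your proposal is correct and takes essentially the same route as the paper, which proves \eqref{eq:B-C} by setting $s=-m$ in Theorem \ref{thm:trans1} and then extending to general $y,w$. The only cosmetic difference is the justification of that extension: the paper appeals to the polynomiality of $\bb_m^{(u)}(y,w;g)$ in $w$ from Theorem \ref{Th-6-2}, while you invoke the analytic continuation of Theorem \ref{AC-uyw}; both are legitimate.
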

\begin{proof}
  Letting $s=-m\in \mathbb{Z}_{\leq 0}$ in Theorem \ref{thm:trans1} and using Theorem \ref{Th-6-2}, we obtain the assertion.
\end{proof}

\if0
\begin{example}\label{Exam-6-4} 
For $g=g_{\eta}=
\begin{pmatrix} -1 & 1 \\ 0 & 1
\end{pmatrix}$, %\in\GL_2(\mathbb{C})$, 
we consider \eqref{eq:B-C} with $(y,w)=(0,0)$. 
By Example \ref{Exam-6-1}, we see that 
\eqref{eq:B-C} gives \eqref{1-5-3}, namely,
$$B_m^{(u)}=C_{m}^{(u)}+C_{m-1}^{(u-1)}$$
(see \cite[Section 3]{AK1999} with $u\in \mathbb{Z}$). In fact, we set $(x,y,g)=(0,0,g_\eta)$ in \eqref{P-Ber-g}, we have
\begin{align}
\Phi(1-e^{-t},u,0)&=\Li_{u}(1-e^{-t}) \notag\\
& =\int_{0}^{t}\frac{\Li_{u-1}(1-e^{-v})}{e^v-1}dv=\sum_{m=0}^\infty C_m^{(u-1)}\frac{t^{m+1}}{(m+1)!}.\label{gf-C-2}
\end{align}
Hence we obtain $\bb_m^{(u)}(0,0;g_\eta)=C_{m-1}^{(u-1)}$.
\end{example}
\fi
\begin{example}\label{Exam-6-4} \ 
It follows from \eqref{eta-xi-vals} and \eqref{C-vals} that \eqref{eq:B-C} with $(y,w,g)=(0,0,g_\eta)$ gives
%By Example \ref{Exam-6-1}, we see that 
%\eqref{eq:B-C} gives \eqref{1-5-3}, namely
%we obtain
\begin{equation}
B_m^{(u)}=C_{m}^{(u)}+C_{m-1}^{(u-1)} \label{B-C-relation}
\end{equation}
(see \cite[Section 3]{AK1999} with $u\in \mathbb{Z}$). 
It is to be noted that \eqref{FR-BC} with $s=-m+1$ implies \eqref{B-C-relation}.
\if0
It follows from \eqref{eta-xi-vals} and \eqref{C-vals} that \eqref{eq:B-C} with $(y,w,g)=(0,0,g_\eta)$ gives
%By Example \ref{Exam-6-1}, we see that 
%\eqref{eq:B-C} gives \eqref{1-5-3}, namely
%we obtain
\begin{equation}
B_m^{(u)}=C_{m}^{(u)}+C_{m-1}^{(u-1)} \label{B-C-relation}
\end{equation}
(see \cite[Section 3]{AK1999} with $u\in \mathbb{Z}$). 
In fact, we set $(x,y,g)=(0,0,g_\eta)$ in \eqref{P-Ber-g}, we have
\begin{align}
\Phi(1-e^{-t},u,0)&=\Li_{u}(1-e^{-t}) \notag\\
& =\int_{0}^{t}\frac{\Li_{u-1}(1-e^{-v})}{e^v-1}dv=\sum_{m=0}^\infty C_m^{(u-1)}\frac{t^{m+1}}{(m+1)!}.\label{gf-C-2}
\end{align}
Hence we obtain $\bb_m^{(u)}(0,0;g_\eta)=C_{m-1}^{(u-1)}$.
\fi
\end{example}

Next we prove the duality relations for poly-Bernoulli polynomials associated with $g$ which include ordinary duality relations \eqref{1-5} and \eqref{1-5-2}. Let ${n \brack m}$ $(n,m\in \mathbb{Z}_{\geq 0})$ be the Stirling numbers of the first kind defined by 
\begin{align*}
& {0 \brack 0}=1,\quad {0 \brack m}=0\ (m\geq 1),\quad 
 \prod_{j=0}^{n-1}(X+j)=\sum_{m=0}^n {n \brack m}X^m\ (n\geq 1).
\end{align*}
Note that
\begin{align*}
& \prod_{j=0}^{n-1}(X-j)=\sum_{m=0}^n (-1)^{n+m}{n \brack m}X^m\ (n\geq 1).
\end{align*}

\begin{theorem}[Duality relations]\label{Th-D-F}\ \ 
Let $g=
\begin{pmatrix} a & b \\ c & d
\end{pmatrix}$. % and assume $g1\neq 1$. %\not\in \{1,\infty\}$. %\in\GL_2(\mathbb{C})$. % satisfying \eqref{eq:def_cond}, $g1\not\in \{1,\infty\}$ and $1\not\in \{g1,g\infty\}$. 
For $k,m,n\in \mathbb{Z}_{\geq 0}$ and $y,w \in \mathbb{C}$, 
\begin{align}
& \sum_{\tau=0}^{n}\binom{n}{\tau}(-c)^\tau a^{n-\tau}\sum_{j=0}^n {n \brack j}\sum_{\sigma=0}^{j}\binom{j}{\sigma}(\tau-y+1)^{j-\sigma}\bb_m^{(-k-\sigma)}(y-\tau,w-n-1;g) \notag \\
& =\frac{(-1)^{n+1}}{{\rm det}\,g} \sum_{\tau=0}^{n}\binom{n}{\tau}c^\tau d^{n-\tau}\sum_{j=0}^n {n \brack j}\sum_{\sigma=0}^{j}\binom{j}{\sigma}(\tau-w+1)^{j-\sigma}\bb_k^{(-m-\sigma)}(w-\tau,y-n-1;g^{-1}). \label{Dual-F-1}\\
& \sum_{\tau=0}^{n}\binom{n}{\tau}d^\tau (-b)^{n-\tau}\sum_{j=0}^n (-1)^{j}{n \brack j}\sum_{\sigma=0}^{j}\binom{j}{\sigma}(\tau-y-1)^{j-\sigma}\bb_m^{(-k-\sigma)}(y+1-\tau,w;g) \notag \\
& =\frac{(-1)^{n+1}}{{\rm det}\,g} \sum_{\tau=0}^{n}\binom{n}{\tau}a^\tau b^{n-\tau}\sum_{j=0}^n (-1)^{j}{n \brack j}\sum_{\sigma=0}^{j}\binom{j}{\sigma}{(\tau-w-1)}^{j-\sigma}\notag \\
& \qquad \times \bb_k^{(-m-\sigma)}(w+1-\tau,y;g^{-1}), \label{Dual-F-2}\\
& \sum_{\tau=0}^{n}\binom{n}{\tau}d^\tau (-b)^{n-\tau}\sum_{j=0}^n {n \brack j}\sum_{\sigma=0}^{j}\binom{j}{\sigma}(\tau-y+1)^{j-\sigma}\bb_m^{(-k-\sigma)}(y-\tau,w;g) \notag \\
& =\frac{(-1)}{{\rm det}\,g} \sum_{\tau=0}^{n}\binom{n}{\tau}c^\tau d^{n-\tau}\sum_{j=0}^n (-1)^{j}{n \brack j}\sum_{\sigma=0}^{j}\binom{j}{\sigma}(\tau-1-w)^{j-\sigma}\notag\\
& \qquad \times \bb_k^{(-m-\sigma)}(w+1-\tau,y-n-1;g^{-1}). \label{Dual-F-3}
\end{align}
In particular when $n=0$, 
\begin{equation}
\bb_k^{(-m)}(y,w-1;g)=-\frac{1}{{\rm det}\,g}\bb_m^{(-k)}(w,y-1;g^{-1}). \label{Dual-F-4}
\end{equation}
\end{theorem}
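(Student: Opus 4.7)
The plan is to derive \eqref{Dual-F-1}, \eqref{Dual-F-2}, and \eqref{Dual-F-3} from the three operator duality relations \eqref{dual-1}, \eqref{dual-2}, and \eqref{dual-3} of Theorem \ref{thm:trans2} by expanding every operator as an explicit linear combination of shifts, specializing $u=-k$, $s=-m$, and then converting values of $\xi_D$ at non-positive integers into poly-Bernoulli polynomials through Definition \ref{g-Bernoulli}. The analytic continuation asserted in Theorem \ref{AC-uyw} guarantees that this specialization is legitimate.

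First I would expand the matrix factors via the binomial theorem. Writing $g^{-1}=(\det g)^{-1}\bigl(\begin{smallmatrix}d&-b\\-c&a\end{smallmatrix}\bigr)$, we have $j_D(g^{-1},D_y^{-1})^n = (\det g)^{-n}(a-cD_y^{-1})^n = (\det g)^{-n}\sum_{\tau}\binom{n}{\tau}(-c)^\tau a^{n-\tau}D_y^{-\tau}$, which produces the factor $\binom{n}{\tau}(-c)^\tau a^{n-\tau}$ in \eqref{Dual-F-1}; the accumulated power $(\det g)^{-n}$ then combines with the global prefactor $(-1/\det g)^{n+1}$ on the right of \eqref{dual-1} to give $(-1)^{n+1}/\det g$. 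The expansions of $j_D(g,D_w^{-1})$, $j_N(g^{-1},D_y^{-1}) = (\det g)^{-1}(dD_y^{-1}-b)$, and $j_N(g,D_w^{-1})$ are entirely analogous and produce the remaining matrix coefficients appearing in \eqref{Dual-F-2} and \eqref{Dual-F-3}.

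Next, the shift-polynomial operators are handled by the two Stirling-number identities
\begin{equation*}
\prod_{j=1}^n(X-y+j) = \sum_{j=0}^n {n\brack j}(X-y+1)^j, \qquad \prod_{j=1}^n(X-y-j) = \sum_{j=0}^n (-1)^{n+j}{n\brack j}(X-y-1)^j,
\end{equation*}
followed by the binomial expansion in $X=D_u^{-1}$ or $X=D_s^{-1}$. Applying a subsequent shift $D_y^{-\tau}$ converts the frozen coefficient $(1-y)^{j-\sigma}$ into $(\tau-y+1)^{j-\sigma}$ via $D_y^{-\tau}(1-y)^{j-\sigma}=(1-(y-\tau))^{j-\sigma}D_y^{-\tau}$, and similarly turns $(-1-y)^{j-\sigma}$ into $(\tau-y-1)^{j-\sigma}$; these are precisely the polynomial factors in \eqref{Dual-F-1}--\eqref{Dual-F-3}. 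In \eqref{Dual-F-2} both sides use the second Stirling identity, so the factor $(-1)^n$ cancels across them and leaves only $(-1)^j$ inside the sums; in \eqref{Dual-F-3} only the right-hand side does, and the surplus $(-1)^n$ combines with the prefactor $(-1)^{n+1}$ to reduce to the $-1$ appearing in the statement. Collecting all terms and specializing $u=-k$, $s=-m$ then yields the three identities.

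Finally, the special case \eqref{Dual-F-4} is either the $n=0$ reduction of any of \eqref{Dual-F-1}--\eqref{Dual-F-3} (where the empty products equal $1$ and the Stirling sums collapse) or, equivalently, the direct specialization $u=-k$, $s=-m$ of \eqref{eq:Tr2} in Corollary \ref{thm:trans3}. The main obstacle is purely combinatorial bookkeeping: tracking the non-commutation of $D_y^{-\tau}$ with multiplication by $y$, and keeping the signs $(-1)^n$, $(-1)^j$, and powers of $\det g$ consistently aligned across the three variants.
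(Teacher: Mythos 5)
Your proposal is correct and follows essentially the same route as the paper: expand the $j_D$, $j_N$ operator factors binomially, expand the shift products $\prod_k(D_u^{-1}-y\pm k)$ via the Stirling numbers of the first kind, track the non-commutation $D_y^{-\tau}(1-y)^{j-\sigma}=(\tau-y+1)^{j-\sigma}D_y^{-\tau}$, specialize $(u,s)=(-k,-m)$, and remove the temporary restriction on $y,w$ by analytic/polynomial continuation (the paper uses the polynomiality of $\bb_m^{(-k)}(y,w;g)$ in $y,w$, you invoke Theorem \ref{AC-uyw}; both suffice). Your sign and $\det g$ bookkeeping for all three variants, and the $n=0$ reduction to \eqref{Dual-F-4}, match the paper's computation.
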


\begin{proof}
First we assume that $\Re w$ and $\Re y$ are sufficiently large. By \eqref{dual-1}, we obtain
\begin{align*}
& \sum_{\tau=0}^{n}\binom{n}{\tau}(-c)^\tau a^{n-\tau}\sum_{j=0}^n {n \brack j}\sum_{\sigma=0}^{j}\binom{j}{\sigma}(1-y+\tau)^{j-\sigma}\xi_D(u-\sigma,s;y-\tau,w-n-1;g) \notag \\
& =\frac{(-1)^{n+1}}{{\rm det}\,g} \sum_{\tau=0}^{n}\binom{n}{\tau}c^\tau d^{n-\tau}\sum_{j=0}^n {n \brack j}\sum_{\sigma=0}^{j}\binom{j}{\sigma}(1-w+\tau)^{j-\sigma}\xi_D(s-\sigma,u;w-\tau,y-n-1;g^{-1}).
\end{align*}
It is noted that, for example, $D_y^{-1}$ and $D_u^{-1}$ are commutative and $D_y^{-\tau}(D_u^{-1}-y+k)=(D_u^{-1}-(y-\tau)+k)D_y^{-\tau}$. 
Letting $(u,s)=(-k,-m)$, we obtain from \eqref{eq:val-xi1} that \eqref{Dual-F-1} holds for $y,w\in \mathbb{C}$ if $\Re y$ and $\Re w$ are sufficiently large. Since $\bb_m^{(-k)}(y,w;g)$ is a polynomial in $y,w$, we see that \eqref{Dual-F-1} holds for all $y,w\in \mathbb{C}$. Similar argument works well for \eqref{Dual-F-2} and \eqref{Dual-F-3} by considering \eqref{dual-2} and \eqref{dual-3}, respectively. When $n=0$, each equation gives \eqref{Dual-F-4}. This completes the proof.
\end{proof}

\begin{example}\label{Exam-6-6}
Let %$g=g_{\eta}$ 
%\begin{pmatrix} -1 & 1 \\ 0 & 1
%\end{pmatrix}$ %\in\GL_2(\mathbb{C})$ 
$(y,w,g)=(1,1,g_\eta)$ in \eqref{Dual-F-1}. 
Then, from Examples \ref{Exam-6-1}, we obtain 
\begin{align}
& \sum_{j=0}^n {n \brack j}B_m^{(-k-j)}(n)=\sum_{j=0}^n {n \brack j}B_k^{(-m-j)}(n), \label{Dual-KST}
\end{align}
which was given by Kaneko, Sakurai and the second-named author (see \cite{KST2016}). In particular when $n=0$ and $1$, we obtain \eqref{1-5} and \eqref{1-5-2}. 
Hence we can regard \eqref{dual-1}--\eqref{dual-3} in Theorem \ref{thm:trans2} as interpolation formulas of the duality relations \eqref{1-5} and \eqref{1-5-2} and their generalizations. Therefore we can give more general examples. For $\alpha\in \mathbb{C}$, let $g=g_{\alpha}=
\begin{pmatrix} -1 & \alpha \\ 0 & 1
\end{pmatrix}$. %\in\GL_2(\mathbb{C})$. 
Suppose $\Re \alpha< 2$ 
and let $(y,w)=(1,1)$ in \eqref{P-Ber-g}. Then $g1=\alpha-1\not\in \{1,\infty\}$ and 
\begin{equation}
e^{wt}\frac{\Li_u(\alpha-e^{t})}{\alpha-e^t}=\sum_{m=0}^\infty \bb_m^{(u)}(1,w;g_\alpha)\frac{t^m}{m!}. \label{P-Ber-galpha}
\end{equation}
We have ${\rm det}\,g_\alpha=-1$ and $g_\alpha^{-1}=g_\alpha$. By \eqref{Dual-F-1} with $g_\alpha$, we have
\begin{equation}
 \sum_{j=0}^n {n \brack j}\bb_m^{(-k-j)}(1,-n;g_\alpha)=\sum_{j=0}^n {n \brack j}\bb_k^{(-m-j)}(1,-n;g_\alpha). \label{Dual-alpha}
\end{equation}
Note that \eqref{Dual-alpha} holds for $\alpha\in \mathbb{C}\setminus \{2\}$. 
In fact, $\bb_m^{(-k)}(1,-n;g_\alpha)$ is a rational function in $\alpha$ and continuous for $\alpha \in \mathbb{C}\setminus \{2\}$, because the left-hand side of \eqref{P-Ber-galpha} is analytic around $t=0$ when $\alpha\in \mathbb{C}\setminus \{2\}$. 
In particular, 
\begin{equation}
 \bb_m^{(-k)}(1,0;g_\alpha)=\bb_k^{(-m)}(1,0;g_\alpha). \label{Dual-alpha-2}
\end{equation}
For example, when $\alpha=3,\,-2$ and $\sqrt{-1}$, then we can check that 
\begin{align*}
& \bb_2^{(-3)}(1,0;g_{{3}})=\bb_3^{(-2)}(1,0;g_{{3}})= 242,\\
& \bb_2^{(-3)}(1,0;g_{{-2}})=\bb_3^{(-2)}(1,0;g_{{-2}})= -\frac{1}{512},\\
& \bb_2^{(-3)}(1,0;g_{\sqrt{-1}})=\bb_3^{(-2)}(1,0;g_{\sqrt{-1}})= -\frac{4}{125} - \frac{22}{125}\sqrt{-1}.
\end{align*}
\end{example}

\begin{example}\label{Exam-C-dual}
By \eqref{Dual-F-4} with $(y,w,g)=(-l,-l,g_\eta)$ for $l\in \mathbb{Z}_{\geq 0}$, we have 
\begin{equation}
\bb_k^{(-m)}(-l,-l-1;g_\eta)=\bb_m^{(-k)}(-l,-l-1;g_\eta)\quad (k,m\in \mathbb{Z}_{\geq 0}). \label{Dual-F-4-2}
\end{equation}
%Let $g=g_\eta$. 
Since
\begin{align*}
&\Phi(z;-k,-l)=\sum_{n=0}z^n(n-l)^k=\sum_{i=0}^{l-1}z^i (i-l)^k+z^l \Li_{-k}(z),
\end{align*}
we obtain from \eqref{gf-C-2} that 
\begin{align*}
e^{-(l+1)t}\Phi(1-e^{-t};-k,-l)& = \sum_{i=0}^{l-1}(i-l)^k\sum_{j=0}^{i}\binom{i}{j}(-1)^j e^{-(l+j+1)t}\\
& \quad +\sum_{j=0}^{l}(-1)^j e^{-(l+j+1)t}\sum_{n=1}^\infty C_{n-1}^{(-k-1)}\frac{t^n}{n!}.
\end{align*}
Hence, by \eqref{P-Ber-g}, we have
\begin{align*}
\bb_m^{(-k)}(-l,-l-1;g_\eta)& = (-1)^m\sum_{i=0}^{l-1}(i-l)^k\sum_{j=0}^{i}\binom{i}{j}(-1)^j(l+j+1)^m \\
& \quad +\sum_{i=0}^{m}\binom{m}{i}\sum_{j=0}^{l}(-1)^j(-l-j-1)^{m-i}C_{i-1}^{(-k-1)}.
\end{align*}
Therefore, for example, \eqref{Dual-F-4-2} in the cases $l=0,1$ give new duality relations
\begin{align}
& \sum_{i=1}^{m}\binom{m}{i}(-1)^{m-i}C_{i-1}^{(-k-1)}=\sum_{i=1}^{k}\binom{k}{i}(-1)^{k-i}C_{i-1}^{(-m-1)}, \label{C-dual1}\\
& (-1)^{k+m}2^m+\sum_{i=1}^{m}\binom{m}{i}\left\{(-2)^{m-i}-(-3)^{m-i}\right\}C_{i-1}^{(-k-1)}\notag \\
& \quad =(-1)^{k+m}2^k+\sum_{i=1}^{k}\binom{k}{i}\left\{(-2)^{k-i}-(-3)^{k-i}\right\}C_{i-1}^{(-m-1)} \label{C-dual2}
\end{align}
for $k,m\in \mathbb{Z}_{\geq 1}$. 

By \eqref{Dual-F-4} with $(y,w,g)=(-l,l,g_\eta)$ for $l\in \mathbb{Z}_{\geq 0}$, we obtain
\if0
Let $g=g_\xi$ in Example \ref{Exam-3-1}. By \eqref{Dual-F-4-2}, we have
\begin{equation}
\bb_k^{(-m)}(-l,-l-1;g_\xi)=-\bb_m^{(-k)}(-l,-l-1;g_\xi^{-1}). \label{C-dual2-2}
\end{equation}
Combining \eqref{Inverse-B} and \eqref{Dual-F-4}, we obtain
\begin{align}
\bb_k^{(-l)}(y,w;g_\eta)& =\bb_l^{(-k)}(w+1,y-1;g_\eta) \notag\\
  & =(-1)^l \bb_l^{(-k)}(w+1,-y;g_\xi)=(-1)^{l+1}\bb_k^{(-l)}(-y+1,w;g_\xi^{-1}).\label{Ber-relation}
\end{align}
Therefore \eqref{C-dual2-2} can be written as 
$$(-1)^k \bb_k^{(-m)}(-l,l;g_\eta)=-(-1)^{k+1}\bb_m^{(-k)}(l+1,-l-1;g_\eta)\quad (l\in \mathbb{Z}_{\geq 0}).$$
%by \eqref{Ber-relation}. 
\fi
$$\bb_k^{(-m)}(-l,l;g_\eta)=\bb_m^{(-k)}(l+1,-l-1;g_\eta). $$
Similar to the above consideration, this produces new duality relations among $C_{m}^{(-k)}$ different from the above formulas. For example, the case $l=0$ implies \eqref{1-5-2}, and the case $l=1$ gives a new formula 
\begin{equation}
\sum_{j=1}^{k-1}\binom{k}{j}C_{j-1}^{(-m-1)}=\sum_{j=0}^{m}\binom{m}{j}\frac{C_{m-j}C_{j+1}^{(-k)}}{j+1}\quad (k,m\in \mathbb{Z}_{0}). \label{dual-C-l-1}
\end{equation}
\end{example}

Finally, we give certain explicit expressions of poly-Bernoulli polynomials.

\begin{lemma}\label{Lem-6-10}\ \ Assume $g1=0$. %and $g\infty \neq 1$. 
For $m\in\mathbb{Z}_{\geq 0}$ and $u,y,w\in \mathbb{C}$ except for appropriate branch cuts, 
\begin{equation}
\bb_m^{(u)}(y,w;g)%=\xi_D(u,-m;w,y;g)
=\xi_{2,m+1}(u,-m;y,w;g). \label{xi-2-exp}
\end{equation}
\end{lemma}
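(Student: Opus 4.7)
My plan is to use the integral representation of Theorem \ref{AC-uyw} with the free parameter $k$ chosen as $k=m+1$, and then show that the ``first part'' of the resulting decomposition vanishes identically when $g1=0$, by a residue/Cauchy-theorem argument in the variable $t$.

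First I set $s=-m$ and apply \eqref{Eq-xi-2} with $k=m+1$. By Definition \ref{g-Bernoulli},
\begin{equation*}
\bb_m^{(u)}(y,w;g)=\xi_D(u,-m;y,w;g)=I(u,y,w;g)+\xi_{2,m+1}(u,-m;y,w;g),
\end{equation*}
where
\begin{equation*}
I(u,y,w;g)=\frac{(-1)^m m!}{2\pi i}\frac{1}{\Gamma(u)(e^{2\pi iu}-1)}
\int_{H} dx \int_{|t|=\epsilon e^{-\Re x}}dt\;
\frac{t^{-m-1}x^{u-1}e^{-wt}e^{-(y+m+1)x}}{j_D(g,e^t)}\frac{(ge^t)^{m+1}}{1-(ge^t)e^{-x}}.
\end{equation*}
It therefore suffices to prove $I(u,y,w;g)=0$ whenever $g1=0$.

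The key observation is that $g1=0$ forces $a+b=0$ (and, since $g\in\GL_2(\mathbb{C})$, $c+d\neq 0$), so that
\begin{equation*}
ge^t=\frac{ae^t+b}{ce^t+d}=\frac{a(e^t-1)}{ce^t+d}.
\end{equation*}
Hence $ge^t$ has a zero of order exactly one at $t=0$, and $(ge^t)^{m+1}$ has a zero of order $m+1$, precisely canceling the factor $t^{-m-1}$ in the integrand. Moreover, $j_D(g,e^t)=ce^t+d$ is nonvanishing at $t=0$, and $1-(ge^t)e^{-x}$ tends to $1$ as $t\to 0$ uniformly in $x$. Thus for $\epsilon$ sufficiently small, the inner integrand is holomorphic in $t$ throughout the closed disk $\{|t|\leq \epsilon e^{-\Re x}\}$, for every $x$ on the Hankel contour $H$.

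By Cauchy's theorem, the inner integral over the circle $|t|=\epsilon e^{-\Re x}$ vanishes for each fixed $x\in H$, and therefore $I(u,y,w;g)=0$, which gives \eqref{xi-2-exp}. The identity is first valid outside the branch cuts, and extends by analytic continuation to the full domain of $\xi_{2,m+1}$. The main subtlety is verifying that the circular $t$-contour lies in a common neighborhood of the origin on which the integrand is jointly holomorphic in $t$ for all $x\in H$; this follows from the estimates in Lemma \ref{lm:est} (in particular \eqref{eq:est3}), which guarantee that $1-(ge^t)e^{-x}$ stays away from zero on the product of contours used in Theorem \ref{AC-uyw}.
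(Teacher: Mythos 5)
Your proof is correct and is essentially the paper's own argument: the authors also apply the representation \eqref{Eq-xi-2} with $k=m+1$ (they write $s=1-k$, $m=k-1$) and observe that, since $g1=0$ forces $ge^t=O(t)$, the factor $(ge^t)^{m+1}$ cancels $t^{-m-1}$, so the integrand is holomorphic in $t$ near the origin and the circle integral vanishes by Cauchy's theorem. Your additional remarks on the nonvanishing of the denominator on the disk $|t|\le\epsilon e^{-\Re x}$ just make explicit what is already built into the choice of contours in Theorem \ref{AC-uyw}.
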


\begin{proof}
Since $g1=0$, we have $O(ge^t)=O(t)$ ($t \to 0$). Substitute $s=1-k$ $(k\in \mathbb{Z}_{\geq 1})$ into \eqref{Eq-xi-2}. Then the first term on the right-hand side of \eqref{Eq-xi-2} vanishes, because its integrand is holomorphic in $t$ around the origin. 
Hence, from Theorem %s \ref{AC-uyw} and 
\ref{Th-6-2}, 
we see that \eqref{xi-2-exp} holds for $u,y,w\in \mathbb{C}$ except for appropriate branch cuts. Replacing $m=k-1$, we have the assertion.
\end{proof}

Combining Lemmas \ref{lm:xi_secondp} and \ref{Lem-6-10}, we have the following.

\begin{example}\label{Exam-6-11}\ 
Let $g=
\begin{pmatrix} a & b \\ c & d
\end{pmatrix}$. 
First we assume $g1=0$ and $g\infty=\infty$, namely, $a+b=0$ and $c=0$. 
%Since $gX=(\alpha\,g)X$ for $\alpha\in \mathbb{C}\setminus \{0\}$, 
By Theorem \ref{thm:simple_trans}, we have only to consider $g=h_d:=\begin{pmatrix} -1 & 1 \\ 0 & d
\end{pmatrix}$ 
for $d\in \mathbb{C}\setminus \{0\}$. Note that $h_1=g_\eta$ (see Example \ref{Exam-3-1}). Combining Lemma \ref{lm:xi_secondp} with $k=m+1$, Theorem \ref{Th-6-2} and Lemma \ref{Lem-6-10}, we have
\begin{align}
\bb_m^{(u)}(y,w;h_d)& =\sum_{n=0}^{m}\frac{1}{(y+n)^u}(1-D_w^{-1})^n \frac{w^m}{d^{n+1}}\notag\\
& =\sum_{n=0}^{m}\frac{1}{(y+n)^u}\sum_{j=0}^{n}\binom{n}{j}(-1)^j \frac{(w-j)^m}{d^{n+1}}.\label{Ber-expr-01}
\end{align}
In particular when $(d,y,w)=(1,1,0)$, from Example \ref{Exam-6-1}, we obtain the well-known expression
$$B_m^{(u)}=(-1)^m \sum_{n=0}^{m}\frac{(-1)^n n!}{(n+1)^u}{m \brace n}$$
(see \cite[Theorem 1]{Kaneko1997}), 
where ${m \brace n}$ is the Stirling number of the second kind determined by 
$${m \brace n}=\frac{(-1)^n}{n!}\sum_{j=0}^n (-1)^j \binom{n}{j}j^m\quad (m,n\in \mathbb{Z}_{\geq0}).$$
Next we assume $g1=0$ and $g0=\infty$, namely, $a+b=0$ and $d=0$. Hence we consider $g=h_c':=\begin{pmatrix} 1 & -1 \\ c & 0
\end{pmatrix}$ 
for $c\in \mathbb{C}\setminus \{0\}$. Note that $h_1'=g_\xi$ (see Example \ref{Exam-3-1}). Combining Lemma \ref{lm:xi_secondp} with $k=m+1$, Theorem \ref{Th-6-2} and Lemma \ref{Lem-6-10}, we have
\begin{align}
\bb_m^{(u)}(y,w;h_c')& =\sum_{n=0}^{m}\frac{1}{(y+n)^u}(1-D_w^{-1})^n \frac{(w+n+1)^m}{c^{n+1}}\notag\\
& =\sum_{n=0}^{m}\frac{1}{(y+n)^u}\sum_{j=0}^{n}\binom{n}{j}(-1)^j \frac{(w+n+1-j)^m}{c^{n+1}}.\label{Ber-expr-02}
\end{align}
\end{example}

%%%%%%%%%%%%%%%%%%%%%%%%%%%%%%%%%%%%%%%%%%%%%%%%%%%%%%%%%%%%%%%%%
\section{Proofs of Lemmas \ref{lm:boundV} and \ref{lm:boundNV}}\label{sec:pr}\label{sec-7}
%%%%%%%%%%%%%%%%%%%%%%%%%%%%%%%%%%%%%%%%%%%%%%%%%%%%%%%%%%%%%%%%
\begin{lemma}
\label{lm:boundn1}
Let $N$ be a neighborhood of the origin in $\mathbb{R}_{\geq 0}$.
Let $a(U),b(U),c(U)$ be real continuous functions in $U\in N$ such that $a(U),c(U)>0$ and
%$|b(U)|\leq \sqrt{a(U)c(U)}$ 
$-\sqrt{a(U)c(U)}\leq b(U)<\sqrt{a(U)c(U)}$ 
for all $U\in N$.
Let $0\leq q\leq 1$.
Then there exists $M>0$ such that
\begin{equation}
  F(U,Y)=\frac{a(U)Y^2-2b(U)UY+c(U)U^2}{U^{2-q}Y^q}\geq M
\end{equation}
for all $(U,Y)$ in a sufficiently small neighborhood of the origin in $\mathbb{R}_{\geq0}^2$ unless the denominator vanishes.  
\end{lemma}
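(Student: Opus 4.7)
The approach I would take exploits the scale structure of $F$: substituting $r = Y/U$ yields
\begin{equation*}
F(U, Y) = f(U, r), \qquad f(U, r) := \frac{a(U) r^2 - 2b(U) r + c(U)}{r^q},
\end{equation*}
so $F$ depends on the pair $(U, Y)$ only through $U$ and the ratio $r$. The plan is therefore to bound $f(U, r)$ below by a positive constant, uniformly for $U$ in a small neighborhood of $0$ and $r \in (0, +\infty)$ (together with $r = 0$ when $q = 0$, where $Y = 0$ is still admissible).

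First, I would handle the limiting slice $U = 0$. The numerator $a(0) r^2 - 2 b(0) r + c(0)$ is a non-negative quadratic in $r$, since its discriminant is $4(b(0)^2 - a(0) c(0)) \le 0$. When the discriminant vanishes, the strict inequality $b(0) < \sqrt{a(0) c(0)}$ forces $b(0) = -\sqrt{a(0) c(0)}$, so the quadratic becomes the perfect square $(\sqrt{a(0)}\, r + \sqrt{c(0)})^2$, whose unique root $r = -\sqrt{c(0)/a(0)}$ is strictly negative. Hence the numerator is strictly positive on $[0, +\infty)$, so $f(0, r) > 0$ throughout its admissible range, and $f(0, r) \to +\infty$ both as $r \to +\infty$ (because $2 - q > 0$) and, when $q > 0$, as $r \to 0^+$. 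A standard compactness argument then yields $M_0 := \inf_r f(0, r) > 0$.

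Next, I would promote this to a uniform lower bound for $U$ near $0$. By continuity, I shrink $N$ so that $a(U), c(U)$ are bounded below by positive constants and $\sqrt{a(U) c(U)} - b(U)$ is bounded below as well. The leading-order behavior of the numerator at $r \to 0$ and $r \to +\infty$ is controlled uniformly by these bounds, so I can choose $0 < R_1 < R_2$ such that $f(U, r) \ge 2 M_0$ whenever $r \notin [R_1, R_2]$ and $U \in N$. On the compact set $N \times [R_1, R_2]$, $f$ is jointly continuous, and at $U = 0$ its infimum over $r$ is $\ge M_0$; a further shrinking of $N$ via joint continuity gives $f(U, r) \ge M_0/2$ on this compact set. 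Combining the two regimes, $F(U, Y) \ge M_0/2$ throughout a sufficiently small neighborhood of the origin on which the denominator does not vanish, so $M = M_0/2$ is the desired constant.

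The only real obstacle I anticipate is ensuring in the first step that the numerator does not vanish on $r \in [0, +\infty)$ in the boundary case $b(0) = -\sqrt{a(0) c(0)}$, where positive-definiteness degenerates. This is exactly where the asymmetry of the hypothesis $-\sqrt{ac} \le b < \sqrt{ac}$ is used: the opposite endpoint $b(0) = +\sqrt{a(0) c(0)}$ would give a perfect square with a positive root, producing a zero of $F$ on a ray from the origin and destroying the bound.
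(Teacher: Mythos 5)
Your proof is correct, but it takes a genuinely different route from the paper's. You exploit the degree-$2$ homogeneity of $F$ in $(U,Y)$ to reduce to the single ratio variable $r=Y/U$, then argue via the discriminant that the quadratic $a(U)r^2-2b(U)r+c(U)$ has no roots in $r\geq 0$ (the degenerate case $b^2=ac$ forces $b=-\sqrt{ac}$ by the asymmetric hypothesis, placing the double root at negative $r$), and finish with growth estimates at $r\to 0^+,\,r\to+\infty$ plus compactness and joint continuity on the middle range. The paper instead fixes $U_0$ and minimizes $F(U_0,\cdot)$ over $Y$ by explicit calculus: it solves $\partial F/\partial Y=0$ to get $Y_0=AU_0$, evaluates $F(U_0,Y_0)$ in closed form, and bounds that expression below, splitting into the cases $q=0$ versus $0<q\leq1$ and $a(0)c(0)\neq b(0)^2$ versus $a(0)c(0)=b(0)^2$. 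Your argument is more conceptual and isolates cleanly where the one-sided hypothesis $b<\sqrt{ac}$ is used; the paper's explicit evaluation of the minimum is heavier but pays off in the companion Lemma \ref{lm:boundn2}, where the quadratic \emph{does} vanish on a positive ray at $U=0$ and one must track the precise rate $(ac-b^2)/U^2\to K>0$ — a soft compactness argument like yours would not suffice there. One small repair in your write-up: for $q=0$ the claim that $f(U,r)\geq 2M_0$ for $r<R_1$ fails (there $f(U,r)\to c(U)$ while $M_0\leq f(0,0)=c(0)$), so in that case you should simply take the compact region to be $N\times[0,R_2]$, consistent with your own remark that $r=0$ is admissible when $q=0$; with that adjustment the argument goes through.
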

\begin{proof}
  We denote $a(U_0),b(U_0),c(U_0)$ by $a,b,c$ respectively for short.

First assume $0<q\leq 1$.
Fix a sufficiently small $U_0>0$.
Then
  \begin{equation}
    \frac{\partial F(U_0,Y)}{\partial Y}=
\frac{a(2-q)Y^2-2b(1-q)U_0Y-cqU_0^2}{U_0^{2-q}Y^{q+1}}=0
  \end{equation}
implies the unique solution
\begin{equation}
\label{eq:Y_01}
    Y_0=AU_0>0
\end{equation}
with
\begin{equation}
  A=\frac{b (1 - q) + \sqrt{b^2 (1 - q)^2 + a c (2 - q) q}}{a (2 - q)}>0.
\end{equation}
Thus we have
\begin{equation}
    F(U_0,Y)\geq F(U_0,Y_0)=2 \frac{a c (2 - q) - b^2 (1 - q) - 
      b\sqrt{b^2 (1 - q)^2 + a c (2 - q) q}}{a (2 - q)^2 A^q}.
\end{equation}
Here
\begin{gather}
  a c (2 - q) - b^2 (1 - q)
  =a c +(ac - b^2) (1 - q)
  \geq ac
  >0,
  \\
  -b\sqrt{b^2 (1 - q)^2 + a c (2 - q) q}\geq
  -|b|\sqrt{a c (1 - q)^2 + a c (2 - q) q}\geq
  -|b|\sqrt{ac}.
\end{gather}
If $a(0)c(0)\neq b(0)^2$, 
then 
\begin{equation}
  F(U_0,Y_0)\geq
2\frac{\sqrt{a c}(\sqrt{a c} -|b|)}{a (2 - q)^2 A^q}
\end{equation}
and
there exists $M>0$ such that
\begin{equation}
  F(U,Y)\geq M  
\end{equation}
for all $(U,Y)$ in a sufficiently small neighborhood of the origin in $\mathbb{R}_{>0}^2$.
If $a(0)c(0)=b(0)^2$, then by the assumption we have $b(0)=-\sqrt{a(0)c(0)}<0$ and $b(U)<0$ for all sufficiently small $U\geq0$.
Then
\begin{equation}
  -b \sqrt{b^2 (1 - q)^2 + a c (2 - q) q}\geq 0
  % -b |b|\sqrt{(1 - q)^2 + (2 - q) q}=
  % b^2
\end{equation}
and
\begin{equation}
  F(U_0,Y_0)\geq\frac{2ac}{a(2-q)^2A^q}.  
\end{equation}
Hence we have the same conclusion.
% there exists $M>0$ such that
% \begin{equation}
%   F(U,Y)>M  
% \end{equation}
% for all $(U,Y)$ in a sufficiently small neighborhood of the origin in $\mathbb{R}_{>0}^2$.

% \begin{equation}
%   \begin{split}
%       -b \sqrt{b^2 (1 - q)^2 + a c (2 - q) q}&\geq
%       \begin{cases}
%         -b \sqrt{ac}
%         \qquad&(b\geq 0)
%         \\
%         b^2
%         \qquad&(b< 0).
%       \end{cases}
%   \end{split}
% \end{equation}
% Hence
% \begin{equation}
%   F(U_0,Y_0)\geq
%   \begin{cases}
%     2\dfrac{b(b-\sqrt{ac})}{a (2 - q)^2}
%     \qquad&(b\geq 0)
%     \\
%     2\dfrac{2b^2}{a (2 - q)^2}
%     \qquad&(b< 0).
%   \end{cases}
% \end{equation}

Next assume $q=0$.
Fix a sufficiently small $U_0>0$.
Then
  \begin{equation}
    \frac{\partial F(U_0,Y)}{\partial Y}=
    2\frac{aY-bU_0}{U_0^{2}}=0
  \end{equation}
implies the unique solution
\begin{equation}
\label{eq:Y_02}
  Y_0=\frac{bU_0}{a}\in\mathbb{R}.
\end{equation}
If $a(0)c(0)\neq b(0)^2$, then 
we have
\begin{equation}
    F(U_0,Y)\geq F(U_0,Y_0)=\frac{ac-b^2}{a}
\end{equation}
and
there exists $M>0$ such that
\begin{equation}
  F(U,Y)\geq M  
\end{equation}
for all $(U,Y)$ in a sufficiently small neighborhood of the origin in $\mathbb{R}_{>0}\times\mathbb{R}_{\geq0}$.
If $a(0)c(0)=b(0)^2$, then by the assumption we have $b(0)=-\sqrt{a(0)c(0)}<0$ and $b(U)<0$ for all sufficiently small $U\geq0$.
Then
\begin{equation}
  F(U_0,Y)\geq F(U_0,0)=c
\end{equation}
for $Y\geq 0$
and we have the same conclusion.
%Lastly we assume $q=2$. Then by exchanging the roles of $U$, $a$ and $Y$, $c$ respectively, this case is reduced to the $q=0$ case. 
\end{proof}

\begin{lemma}
\label{lm:boundn2}
Let $N$ be a neighborhood of the origin in $\mathbb{R}_{\geq 0}$.
Let $a(U),b(U),c(U)$ be real continuous functions in $U\in N$ such that $a(U),c(U)>0$, $-\sqrt{a(U)c(U)}\leq b(U)<\sqrt{a(U)c(U)}$ 
for all $U\in N\setminus\{0\}$,
$b(0)=\sqrt{a(0)c(0)}$ and
\begin{equation}
  K=\lim_{U\to 0}\frac{a(U)c(U)-b(U)^2}{U^2}>0.
\end{equation}
Let $0\leq q\leq 2$.
Then there exists $M>0$ such that
\begin{equation}
  G(U,Y)=\frac{a(U)Y^2-2b(U)UY+c(U)U^2}{U^{4-q}Y^q}\geq M
\end{equation}
for all $(U,Y)$ in a sufficiently small neighborhood of the origin in $\mathbb{R}_{\geq0}^2$ unless the denominator vanishes.  
\end{lemma}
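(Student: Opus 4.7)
The plan is to complete the square in the numerator, obtaining the pointwise non-negative decomposition
\begin{equation*}
G(U, Y) = \frac{a(U)\bigl(Y - (b(U)/a(U))U\bigr)^2}{U^{4-q} Y^q} + \frac{\delta(U)/a(U)}{U^{2-q} Y^q},
\end{equation*}
where $\delta(U) = a(U)c(U) - b(U)^2 \geq 0$. The hypothesis $b(0) = \sqrt{a(0)c(0)}$ ensures $\delta(0) = 0$, so the first term vanishes identically along the cusp line $Y = \sqrt{c(0)/a(0)}\,U$ at $U=0$; the quantitative hypothesis $\delta(U)/U^2 \to K > 0$ encodes the second-order non-degeneracy and guarantees that the minimum of the numerator vanishes exactly to order $U^4$, matching the denominator scaling at the critical locus.

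Next I would fix a constant $C > 2b(0)/a(0)$, which is well-defined and positive because $b(0) = \sqrt{a(0)c(0)} > 0$, and split a neighborhood of the origin according to whether $Y \leq CU$ or $Y \geq CU$. In the first region, $Y^q \leq C^q U^q$ gives
\begin{equation*}
G(U,Y) \geq \frac{\delta(U)/a(U)}{C^q U^2} = \frac{1}{C^q a(U)} \cdot \frac{\delta(U)}{U^2},
\end{equation*}
which by the hypothesis on $\delta$ and continuity of $a$ is bounded below by a positive constant $M_1$ on a sufficiently small neighborhood of $U = 0$. In the second region, continuity and the choice of $C$ give $b(U)/a(U) \leq C/2$, hence $Y - (b(U)/a(U))U \geq Y/2$, and
\begin{equation*}
G(U,Y) \geq \frac{a(U)Y^{2-q}}{4 U^{4-q}} \geq \frac{a(U) C^{2-q}}{4 U^2},
\end{equation*}
using $q \leq 2$ so that $Y^{2-q} \geq (CU)^{2-q}$ (with both sides equal to $1$ when $q = 2$). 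This bound diverges as $U \to 0$, so exceeds $M_1$ on a sufficiently small neighborhood. Taking $M = M_1$ on the intersection of the two neighborhoods yields the conclusion, with the edge cases $Y = 0$ (permitted only for $q = 0$, where the bound is inherited from the first region) and $U = 0$ excluded as denominator zeros.

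The main obstacle lies in the transition region $Y \sim U$: there the square term can vanish along the cusp line, while the residual term $\delta U^2/a$ is only of order $U^4$, marginally matching the denominator. The completion of the square separates these contributions cleanly, and the hypothesis $K > 0$ is precisely what is needed to ensure the residual contributes on the correct scale. Without $K > 0$, the second term would be smaller than $U^2$ relative to the denominator and no uniform $M > 0$ could exist; this is the essential difference from Lemma \ref{lm:boundn1} and the reason the additional non-degeneracy hypothesis must be imposed at the cusp.
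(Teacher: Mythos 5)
Your proof is correct, and it takes a genuinely different route from the paper's. The paper proves the bound by fixing a small $U_0$, locating the unique critical point of $Y\mapsto G(U_0,Y)$ by differentiation, evaluating the minimum there, and showing via an algebraic rationalization that this minimum tends to a positive limit as $U_0\to 0$; this forces three separate subcases ($q=0$, $0<q<2$, $q=2$) because the critical-point formula degenerates at the endpoints. You instead complete the square, writing the numerator as $a(U)\bigl(Y-(b(U)/a(U))U\bigr)^2+\bigl(\delta(U)/a(U)\bigr)U^2$ with $\delta=ac-b^2\geq 0$, and then split into the regions $Y\leq CU$ and $Y\geq CU$ for a fixed $C>2b(0)/a(0)$: near the cusp line the residual term $\delta(U)/(a(U)C^qU^2)$ is bounded below by roughly $K/(a(0)C^q)$ thanks to the hypothesis $\delta(U)/U^2\to K>0$, while away from it the square term alone contributes at least $a(U)C^{2-q}/(4U^2)$, which blows up. All inequalities check out (including the monotonicity $Y^{2-q}\geq (CU)^{2-q}$, valid since $2-q\geq 0$, and the edge cases $Y=0$, $U=0$), and the argument is uniform in $q\in[0,2]$, avoiding the paper's endpoint case analysis. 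What the paper's calculus approach buys is the sharp asymptotic constant $K/\bigl(a(0)(c(0)/a(0))^{q/2}\bigr)$ for the infimum; your approach trades that sharpness for a shorter, more transparent proof that isolates exactly where the non-degeneracy hypothesis $K>0$ enters.
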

\begin{proof}
  We denote $a(U_0),b(U_0),c(U_0)$ by $a,b,c$ respectively for short.
Note that $G(U,Y)=U^{-2}F(U,Y)$, where $F(U,Y)$ is given in Lemma \ref{lm:boundn1}.

First assume $0<q<2$.
Fix a sufficiently small $U_0>0$. 
Then $G(U_0,Y)$ attains its minimum at the same $Y_0$ as \eqref{eq:Y_01}, which is also valid for $1\leq q<2$
and
\begin{equation}
    G(U_0,Y)\geq G(U_0,Y_0)=2 \frac{a c (2 - q) - b^2 (1 - q) - 
      b\sqrt{b^2 (1 - q)^2 + a c (2 - q) q}}{a (2 - q)^2 A^q U_0^2}
\end{equation}
with 
\begin{gather}
   a c (2 - q) - b^2 (1 - q)
   =b^2 +(ac - b^2) (2 - q)
  \geq 0,
  \\
  (a c (2 - q) - b^2 (1 - q))^2 - 
  (b\sqrt{b^2 (1 - q)^2 + a c (2 - q) q})^2=
  a c (ac-b^2) (2-q)^2.
\end{gather}
Since
\begin{gather}
B=  a c (2 - q) - b^2 (1 - q)+ 
b\sqrt{b^2 (1 - q)^2 + a c (2 - q) q}\to
2a(0)c(0),\\
A\to \sqrt{c(0)/a(0)}
\end{gather}
as $U_0\to 0$, we have
\begin{equation}
G(U_0,Y)\geq 
  G(U_0,Y_0)=
  2\frac{a c (2-q)^2}{a (2 - q)^2 A^q B}
  \frac{a c -b^2}{U_0^2 }
  \to
  \frac{K}{a(0)\sqrt{c(0)/a(0)}^q}>0
\end{equation}
as $U_0\to 0$.
Thus
there exists $M>0$ such that
\begin{equation}
  G(U,Y)\geq M  
\end{equation}
for all $(U,Y)$ in a sufficiently small neighborhood of the origin in $\mathbb{R}_{>0}^2$.

Secondly assume $q=0$.
Fix a sufficiently small $U_0>0$.
Then $G(U_0,Y)$ attains its minimum at the same $Y_0$ as \eqref{eq:Y_02} and
\begin{equation}
    G(U_0,Y)\geq G(U_0,Y_0)=\frac{ac-b^2}{aU_0^2}\to\frac{K}{a(0)}>0
\end{equation}
as $U_0\to 0$.
Thus
there exists $M>0$ such that
\begin{equation}
  G(U,Y)\geq M  
\end{equation}
for all $(U,Y)$ in a sufficiently small neighborhood of the origin in $\mathbb{R}_{>0}\times\mathbb{R}_{\geq0}$.

Thirdly we assume $q=2$. 
Fix a sufficiently small $U_0>0$.
Then
  \begin{equation}
    \frac{\partial G(U_0,Y)}{\partial Y}=
    2\frac{bU_0Y-cU_0^2}{U_0^{2}Y^{3}}=0
  \end{equation}
implies the unique solution
\begin{equation}
    Y_0=\frac{cU_0}{b}>0
\end{equation}
because
by the assumption, $b(U)>0$ for all sufficiently small $U\geq0$.
Thus we have
\begin{equation}
    G(U_0,Y)\geq G(U_0,Y_0)=\frac{ac-b^2}{cU_0^2}\to\frac{K}{c(0)}>0
\end{equation}
as $U_0\to 0$
and
there exists $M>0$ such that
\begin{equation}
  G(U,Y)\geq M  
\end{equation}
for all $(U,Y)$ in a sufficiently small neighborhood of the origin in $\mathbb{R}_{>0}^2$.
% Lastly we assume $2<q\leq 4$. Then by exchanging the roles of $U$, $a$ and $Y$, $c$ respectively, this case is reduced to the $0\leq q<2$ case.
\end{proof}

\begin{lemma}
\label{lm:bound}
Assume that $h=\begin{pmatrix}
  \alpha & \beta \\ \gamma & \delta
\end{pmatrix}\in\GL_2(\mathbb{C})$ satisfies $hU=Y$ for only $(U,Y)=(0,0)$ in a neighborhood of the origin in $\mathbb{R}_{\geq 0}^2$.
Then for $0\leq q\leq1$,
there exists $M>0$ such that
\begin{equation}
  \frac{1}{|\alpha U+\beta-Y(\gamma U+\delta)|}\leq 
  \begin{cases}
    \dfrac{M}{U^{1-q}Y^q}\qquad&\text{if the origin is not a cusp}, \\
    \dfrac{M}{U^{2(1-q)}Y^{2q}}\qquad&\text{if the origin is a cusp}
  \end{cases}
\end{equation}
in a neighborhood of the origin in $\mathbb{R}_{>0}^2$. 
\end{lemma}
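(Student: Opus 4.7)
My plan is to reduce the modulus squared $|\alpha U+\beta-Y(\gamma U+\delta)|^2$ to a quadratic form of the shape treated in Lemmas~\ref{lm:boundn1} and~\ref{lm:boundn2}, and then apply those lemmas together with their $U\leftrightarrow Y$ symmetric counterparts to span the full range $q\in[0,1]$. The hypothesis that $hU=Y$ has only the trivial solution near the origin in $\mathbb{R}_{\geq 0}^2$ forces $h(0)=0$, whence $\beta=0$, and invertibility of $h$ then forces $\alpha,\delta\neq 0$. A direct expansion gives
\begin{equation*}
|\alpha U-Y(\gamma U+\delta)|^2=a(U)Y^2-2b(U)UY+c(U)U^2
\end{equation*}
with $a(U)=|\gamma U+\delta|^2$, $b(U)=\Re(\alpha\bar\delta)+\Re(\alpha\bar\gamma)U$, $c(U)=|\alpha|^2$; these satisfy $a,c>0$ near $U=0$, and the identity $a(U)c(U)-b(U)^2=(\Im(\alpha\bar\delta)+\Im(\alpha\bar\gamma)U)^2\geq0$ shows $b^2\leq ac$ throughout. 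The same quantity equally equals $\tilde a(Y)U^2-2\tilde b(Y)UY+\tilde c(Y)Y^2$ with $\tilde a(Y)=|\alpha-\gamma Y|^2$, $\tilde b(Y)=\Re(\alpha\bar\delta)-\Re(\gamma\bar\delta)Y$, $\tilde c(Y)=|\delta|^2$, and the analogous non-negativity $\tilde a\tilde c-\tilde b^2\geq0$ holds.

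Next I classify: the origin is a cusp precisely when $h'(0)=\alpha/\delta$ is real and positive. In the non-cusp case, $b(0)=\Re(\alpha\bar\delta)\in[-|\alpha||\delta|,\,|\alpha||\delta|)$, which together with continuity gives $-\sqrt{ac}\leq b(U)<\sqrt{ac}$ in a neighborhood of $U=0$; this is precisely the hypothesis of Lemma~\ref{lm:boundn1} on $(a,b,c)$, and an identical check applies to $(\tilde a,\tilde b,\tilde c)$. In the cusp case, $b(0)=+\sqrt{a(0)c(0)}$ and $\tilde b(0)=+\sqrt{\tilde a(0)\tilde c(0)}$, so I must verify the extra positivity required by Lemma~\ref{lm:boundn2}. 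Writing $\alpha=(\alpha/\delta)\delta$ and expanding yields
\begin{equation*}
a(U)c(U)-b(U)^2=(\alpha/\delta)^2\Im(\gamma\bar\delta)^2\,U^2,\qquad \tilde a(Y)\tilde c(Y)-\tilde b(Y)^2=\Im(\gamma\bar\delta)^2\,Y^2,
\end{equation*}
so both $K$ and $\tilde K$ are positive iff $\gamma/\delta\notin\mathbb{R}$. The hypothesis on $h$ forces this in the cusp case, since otherwise $h(U)=(\alpha/\delta)U/((\gamma/\delta)U+1)$ would be real and positive for all small $U>0$, producing non-trivial solutions of $hU=Y$ in $\mathbb{R}_{\geq 0}^2$ near the origin.

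Squaring the target inequality, in the non-cusp case I need the numerator $\geq c_q\,U^{2-2q}Y^{2q}$, and in the cusp case $\geq c_q\,U^{4-4q}Y^{4q}$, for $q\in[0,1]$. For $q\in[0,\tfrac12]$ these follow from Lemmas~\ref{lm:boundn1} and~\ref{lm:boundn2} applied to $(a,b,c)$ with parameter $2q$ or $4q$ respectively. For $q\in[\tfrac12,1]$ I apply the same lemmas to the dual presentation $(\tilde a,\tilde b,\tilde c)$ with parameter $2-2q$ or $4-4q$; the requisite symmetric statements are obtained from the originals by the $U\leftrightarrow Y$ (equivalently $a\leftrightarrow c$) symmetry of the numerator, the denominator transforming to $Y^{2-q'}U^{q'}$ or $Y^{4-q'}U^{q'}$. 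Taking square roots and reciprocals yields the stated estimate.

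The main obstacle is the cusp case, and specifically the verification $K>0$: this is the only point where the geometric hypothesis on $h$ is used analytically. Everything else amounts to continuity of the coefficients, the algebraic identity $|\gamma\bar\delta|^2=\Re(\gamma\bar\delta)^2+\Im(\gamma\bar\delta)^2$, and a clean bookkeeping of the parameter range for $q$.
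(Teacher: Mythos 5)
Your proposal is correct and follows essentially the same route as the paper's proof: the same reduction to $\beta=0$, the same two quadratic-form presentations of $|\alpha U-Y(\gamma U+\delta)|^2$ (in $Y$ with $U$-dependent coefficients and vice versa), the same verification of the hypotheses of Lemmas \ref{lm:boundn1} and \ref{lm:boundn2} — including the key point that the cusp condition $\alpha\bar\delta\in\mathbb{R}_{>0}$ together with the no-nontrivial-solution hypothesis forces $\Im(\alpha\bar\gamma)\neq0$, giving $K>0$ — and the same split at $q=1/2$ with the $U\leftrightarrow Y$ exchange. The only differences are cosmetic (e.g.\ writing the discriminant as $(\alpha/\delta)^2\Im(\gamma\bar\delta)^2U^2$ instead of $(\Im\alpha\bar\gamma)^2U^2$).
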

\begin{proof}
Assume that $h=\begin{pmatrix}
  \alpha & \beta \\ \gamma & \delta
\end{pmatrix}\in\GL_2(\mathbb{C})$ satisfies $hU=Y$ for only $(U,Y)=(0,0)$ in the neighborhood of the origin in $\mathbb{R}_{\geq 0}^2$,
Then $h0=0$ implies $\beta=0$ and $\det h=\alpha\delta\neq 0$.
Hence $hU=Y$ is rewritten as
\begin{equation}
\label{eq:pos_Y}
  Y=\frac{\alpha U}{\gamma U+\delta}=\frac{(\alpha\overline{\gamma} U+\alpha\overline{\delta}) U}{|\gamma U+\delta|^2}.  
\end{equation}
Assume that 
$\alpha\overline{\delta}\in\mathbb{R}_{>0}$ and 
$\alpha\overline{\gamma}\in\mathbb{R}$.
Then in any neighborhood of the origin, a pair $(U,Y)$ with a small $U>0$ and
 $Y$ given by
\eqref{eq:pos_Y}
is a solution.
Thus if the solution is only $(U,Y)=(0,0)$ in a neighborhood of the origin in $\mathbb{R}_{\geq 0}^2$, then
$\alpha\overline{\delta}\notin\mathbb{R}_{>0}$ or
$\alpha\overline{\gamma}\notin\mathbb{R}$. 

If the origin is a cusp,
then
 $\dfrac{d}{dU} hU\Bigr|_{U=0}=\dfrac{\det h}{\delta^2}=\dfrac{\alpha\overline{\delta}}{|\delta|^2}>0$
and hence $\alpha\overline{\delta}\in\mathbb{R}_{>0}$. The converse is also true.

Assume $0\leq q\leq 1/2$.
Consider
\begin{equation}
    |\alpha U-\delta Y-\gamma UY|^2
    =|\delta+\gamma U|^2
    Y^2-2\Re(\alpha\overline{\delta}+\alpha\overline{\gamma}U)UY+|\alpha|^2 U^2
\end{equation}
and let
\begin{equation}
  a(U)=  |\delta+\gamma U|^2, \qquad
  b(U)=  \Re(\alpha\overline{\delta}+\alpha\overline{\gamma}U),\qquad
  c(U)=  |\alpha|^2.
\end{equation}
We check the assumptions in Lemmas \ref{lm:boundn1} and \ref{lm:boundn2}.
Since $\alpha,\delta\neq 0$, we see that $a(U),c(U)>0$ for all sufficiently small $U\geq 0$. Furthermore
\begin{equation}
  a(U)c(U)-b(U)^2=|\alpha\overline{\delta}+\alpha\overline{\gamma}U|^2-
(\Re(\alpha\overline{\delta}+\alpha\overline{\gamma}U))^2
=
(\Im(\alpha\overline{\delta}+\alpha\overline{\gamma}U))^2\geq0,
\end{equation}
which implies 
$-\sqrt{a(U)c(U)}\leq b(U)\leq\sqrt{a(U)c(U)}$.
Since $\alpha\overline{\delta}\notin\mathbb{R}_{>0}$ or
$\alpha\overline{\gamma}\notin\mathbb{R}$,
\begin{equation}
\sqrt{a(U)c(U)}-b(U)=
  |\alpha\overline{\delta}+\alpha\overline{\gamma}U|-
\Re(\alpha\overline{\delta}+\alpha\overline{\gamma}U)\neq 0
\end{equation}
holds for all sufficiently small $U\geq 0$ if $\alpha\overline{\delta}\notin\mathbb{R}_{>0}$,
and for all sufficiently small $U>0$ if $\alpha\overline{\delta}\in\mathbb{R}_{>0}$. In the latter case, 
\begin{equation}
  b(0)=\Re \alpha\overline{\delta}=|\alpha\overline{\delta}|=\sqrt{a(0)c(0)}
\end{equation}
and
\begin{equation}
  \frac{a(U)c(U)-b(U)^2}{U^2}=
\frac{(\Im(\alpha\overline{\delta}+\alpha\overline{\gamma}U))^2}{U^2}
=
\frac{(\Im \alpha\overline{\gamma}U)^2}{U^2}
=
(\Im \alpha\overline{\gamma})^2>0.
\end{equation}
Thus we have checked the assumptions required and have the assertions in this case.

% First
% assume $\alpha\overline{\delta}\notin\mathbb{R}_{>0}$. 
% Next we assume
% $\alpha\overline{\delta}\in\mathbb{R}_{>0}$.
% Then 
% $\alpha\overline{\gamma}\notin\mathbb{R}$.  

For $1/2< q\leq 1$, 
exchanging the roles of $U$ and $Y$, and
applying Lemmas \ref{lm:boundn1} and \ref{lm:boundn2}
with
\begin{equation}
    |\alpha U-\delta Y-\gamma UY|^2
=|\alpha-\gamma Y|^2
    U^2-2\Re(\delta\overline{\alpha}-\delta\overline{\gamma}Y)YU+|\delta|^2 Y^2
\end{equation}
and
\begin{equation}
a(Y)=|\alpha-\gamma Y|^2,\qquad
b(Y)=\Re(\delta\overline{\alpha}-\delta\overline{\gamma}Y),\qquad
c(Y)=|\delta|^2,
\end{equation}
we
have the assertions in this case.
Here we used the fact that
$\alpha\overline{\delta}\in\mathbb{R}_{>0}$ implies
$\alpha\overline{\gamma}\notin\mathbb{R}$,
and hence
$\delta\overline{\alpha}\in\mathbb{R}_{>0}$ and
$\delta\overline{\gamma}\notin\mathbb{R}$.
\end{proof}

\begin{lemma}
\label{lm:div}
Assume that $h=\begin{pmatrix}
  \alpha & \beta \\ \gamma & \delta
\end{pmatrix}\in\GL_2(\mathbb{C})$ satisfies $hU=Y$ for only $(U,Y)=(0,0)$ in a neighborhood of the origin in $\mathbb{R}_{\geq 0}^2$.
% Let $S$ be the set of all solutions of $hU=Y$ in
% a sufficiently small neighborhood of the origin in $\mathbb{C}^2$.
%$S=\{(U,Y)\in N~|~hU=Y\}$.
% Let $N$ be a sufficiently small neighborhood of the origin in $\mathbb{C}^2$ and 
% $S=\{(U,Y)\in N~|~hU=Y\}$.
Then there exists $\epsilon>0$ such that 
\begin{equation}
  \frac{1}{\epsilon}|Y|> |U|>\epsilon |Y|
  % |U|>\epsilon |Y|,\qquad
  % |Y|>\epsilon |U|
\end{equation}
for any %$(U,Y)\in S$,
pair $(U,Y)$ satisfying
$hU=Y$ in
 a sufficiently small neighborhood of the origin in $\mathbb{C}^2$.
\end{lemma}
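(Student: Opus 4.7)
The plan is to exploit the explicit form of $h$ established at the start of the proof of Lemma \ref{lm:bound}. The hypothesis that $(U,Y)=(0,0)$ is a solution of $hU=Y$ forces $h0=0$, hence $\beta=0$ and $\delta\neq 0$; then $\det h=\alpha\delta\neq 0$ forces $\alpha\neq 0$ as well. I would record these facts as the starting point.

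With $\beta=0$ the equation $hU=Y$ becomes $\alpha U=(\gamma U+\delta)Y$, which I would solve in two equivalent ways. First, shrinking the neighborhood so that $|\gamma U+\delta|\geq|\delta|/2$, one obtains $Y=\alpha U/(\gamma U+\delta)$ and hence $|Y|\leq(2|\alpha|/|\delta|)\,|U|$. Symmetrically, rewriting the same equation as $(\alpha-\gamma Y)U=\delta Y$ and shrinking further so that $|\alpha-\gamma Y|\geq|\alpha|/2$ gives $U=\delta Y/(\alpha-\gamma Y)$ and $|U|\leq(2|\delta|/|\alpha|)\,|Y|$. Setting
\[
\epsilon=\tfrac{1}{2}\min\bigl(|\delta|/(2|\alpha|),\,|\alpha|/(2|\delta|)\bigr)
\]
then yields the strict two-sided estimate $\epsilon|Y|<|U|<|Y|/\epsilon$ on the punctured neighborhood, where the additional factor $1/2$ upgrades the non-strict bounds to strict ones away from $(0,0)$.

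No step poses a serious obstacle: the statement is morally the assertion that the M\"obius transformation $h$ is holomorphic at the origin with nonzero derivative $h'(0)=\alpha/\delta$, and therefore biLipschitz there. The only subtlety worth noting is that the hypothesis is phrased in terms of $\mathbb{R}_{\geq 0}^2$ while the conclusion concerns $\mathbb{C}^2$; however, the algebraic identities above are valid complex-analytically, so the bounds $|Y|\lesssim|U|$ and $|U|\lesssim|Y|$ transfer verbatim once $|U|,|Y|$ are taken small enough for the denominators $\gamma U+\delta$ and $\alpha-\gamma Y$ to stay away from $0$.
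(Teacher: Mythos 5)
Your proposal is correct and follows essentially the same route as the paper: deduce $\beta=0$ and $\alpha\delta\neq 0$ from $h0=0$ and $\det h\neq 0$, then read off the two-sided bound from the explicit formulas $Y=\alpha U/(\gamma U+\delta)$ and $U=\delta Y/(\alpha-\gamma Y)$ after bounding the denominators below near the origin. Your remarks on strictness away from $(0,0)$ and on the passage from the real hypothesis to the complex conclusion are sensible refinements of the same argument.
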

\begin{proof}
From the first paragraph of the proof of Lemma \ref{lm:bound}, we see that
$\beta=0$ and $\alpha\delta\neq 0$.
Since $hU=Y$ is rewritten as
$Y=\dfrac{\alpha U}{\gamma U+\delta}$,
we have
\begin{equation}
  |Y|%=\Bigl|\frac{\alpha U}{\gamma U+\delta}\Bigr|
\geq\frac{|\alpha|}{|\delta|\Bigl|1+\dfrac{\gamma}{\delta}U\Bigr|}|U|
\geq\frac{|\alpha|}{2|\delta|}|U|.
\end{equation}
Similarly
$U=\dfrac{\delta Y}{\gamma Y-\alpha}$ implies
\begin{equation}
  |U|
\geq\frac{|\delta|}{2|\alpha|}|Y|.
\end{equation}

\end{proof}

\begin{proof}[Proof of Lemma \ref{lm:boundV}]
For $Z\in\{1,\infty\}$, let 
\begin{equation}
  k_Z=
  \begin{pmatrix}
    \Tilde{Z} & -1 \\ -(-1)^{\Tilde{Z}} & 0
  \end{pmatrix}=
  \begin{cases}
  \begin{pmatrix}
    1 & -1 \\ 1 & 0
  \end{pmatrix}\qquad&(Z=1),\\
  \begin{pmatrix}
    0 & -1 \\ -1 & 0
  \end{pmatrix}\qquad&(Z=\infty). 
  \end{cases}
\end{equation}
Note that $k_Z$ maps a neighborhood of $Z$ in $[1,+\infty]$ to a neighborhood of the origin in $\mathbb{R}_{\geq 0}$.

By putting $U=k_{T_0}T$ and $Y=k_{X_0}X$,
we see that $h=k_{X_0}gk_{T_0}^{-1}=
\begin{pmatrix}
  \alpha & \beta \\ \gamma & \delta
\end{pmatrix}$ satisfies the assumption in Lemma \ref{lm:bound}.
Since 
\begin{equation}
  k_Z^{-1}=
  \begin{pmatrix}
    0 & -(-1)^{\Tilde{Z}} \\ -1 & -(-1)^{\Tilde{Z}}\Tilde{Z}
  \end{pmatrix}
\end{equation}
and
\begin{gather}
  j_D(k_{X_0}^{-1}hk_{T_0},k_{T_0}^{-1}U)=\frac{j_D(k_{X_0}^{-1}h,U)}{j_D(k_{T_0}^{-1},U)},\\
j_D(k_{X_0}^{-1}h,U)=j_D(k_{X_0}^{-1},hU)j_D(h,U),\qquad
j_N(k_{X_0}^{-1}h,U)=j_N(k_{X_0}^{-1},hU)j_D(h,U),\qquad
\end{gather}
we have
\begin{equation}
\label{eq:jD}
  \begin{split}
    j_D(g,T)(1-(gT)X^{-1})&=
    j_D(k_{X_0}^{-1}hk_{T_0},k_{T_0}^{-1}U)(1-(k_{X_0}^{-1}hU)(k_{X_0}^{-1}Y)^{-1})\\
    &=
    \frac{j_D(k_{X_0}^{-1}h,U)}{j_D(k_{T_0}^{-1},U)}
    \Bigl(1-\frac{j_N(k_{X_0}^{-1}h,U)}{j_D(k_{X_0}^{-1}h,U)}
    \frac{j_D(k_{X_0}^{-1},Y)}{j_N(k_{X_0}^{-1},Y)}
    \Bigr)
    \\
    &=
    \frac{j_D(k_{X_0}^{-1}h,U)j_N(k_{X_0}^{-1},Y)-j_N(k_{X_0}^{-1}h,U)j_D(k_{X_0}^{-1},Y)}
    {j_D(k_{T_0}^{-1},U)j_N(k_{X_0}^{-1},Y)}
    \\
    &=
    \frac{j_D(k_{X_0}^{-1},hU)-j_D(k_{X_0}^{-1},Y)}
    {j_D(k_{T_0}^{-1},U)}j_D(h,U)
    \\
    &=
    \frac{j_N(h,U)-Yj_D(h,U)}
    {U+(-1)^{\Tilde{T_0}} \Tilde{T_0}}
    \\
    &=
    \frac{\alpha U+\beta-Y (\gamma U+\delta)}
    {U+(-1)^{\Tilde{T_0}} \Tilde{T_0}}.
  \end{split}
\end{equation}
Hence
\begin{multline}
    \Bigl|\frac{1}{j_D(g,T)}\frac{1}{(1-(gT)X^{-1})}\Bigr|\\
    \leq M|U+(-1)^{\Tilde{T_0}} \Tilde{T_0}|
\times
  \begin{cases}
    \dfrac{1}{U^{1-q}Y^q}\qquad&\text{if the vertex is not a cusp}, \\
    \dfrac{1}{U^{2(1-q)}Y^{2q}}\qquad&\text{if the vertex is a cusp}.
  \end{cases}
\end{multline}
Since
\begin{align}
  U=k_{T_0}T&=\frac{\Tilde{T_0}T-1}{-(-1)^{\Tilde{T_0}}T}, \\
  Y=k_{X_0}X&=\frac{\Tilde{X_0}X-1}{-(-1)^{\Tilde{X_0}}X},
\end{align}
we obtain the first result.

The second statement follows from Lemma \ref{lm:div}.
\end{proof}

\begin{proof}[Proof of Lemma \ref{lm:boundNV}]
We use the same notation as in Lemma \ref{lm:boundV}.
If $V(g)\neq\emptyset$, then we fix $X_0\in V(g)$ and $T_0=g^{-1}X_0$,
and otherwise put $X_0=T_0=\infty$.
Further
put $U=k_{T_0}T$, $Y=k_{X_0}X$,
$h=k_{X_0}gk_{T_0}^{-1}=
\begin{pmatrix}
  \alpha & \beta \\ \gamma & \delta
\end{pmatrix}$
and $S(g)=\{(U,Y)\in[0,1]^2~|~hU=Y\}$.
We see that 
\begin{equation}
  S(g)=
  \begin{cases}
    \emptyset\qquad& (\sharp V(g)=0),\\
    \{(0,0)\}\qquad& (\sharp V(g)=1),\\
    \{(0,0),(1,1)\}\qquad& (\sharp V(g)=2),
  \end{cases}
\end{equation}
and
$S(g)$
coincides with the set of all solutions of $\alpha U+\beta=Y(\gamma U+\delta)$ in $[0,1]^2$.
Let $N_{\epsilon'}\subset(k_{T_0}\times k_{X_0})(N)$ be an open $\epsilon'$-neighborhood of $S(g)$ in $\mathbb{C}^2$ and
 $B_{\epsilon''}$ be an $\epsilon''$-neighborhood of $[0,1]$ in $\mathbb{C}$.
Since $[0,1]^2\setminus N_{\epsilon'}$ is a compact set in $\mathbb{C}^2$,  there exists $M>0$ and $\epsilon''>0$
such that
\begin{equation}
  |\alpha U+\beta-Y(\gamma U+\delta)|>\frac{1}{M}
\end{equation}
for all $(U,Y)\in B_{\epsilon''}^2\setminus N_{\epsilon'}$.
By the same calculation as \eqref{eq:jD},
we have
\begin{equation}
  j_D(g,T)(1-(gT)X^{-1})
  =
  \frac{\alpha U+\beta-Y(\gamma U+\delta)}{U+(-1)^{\Tilde{T_0}} \Tilde{T_0}}.
\end{equation}
Hence
\begin{equation}
  \Bigl|\frac{1}{j_D(g,T)}\frac{1}{(1-(gT)X^{-1})}\Bigr|
  \leq \frac{M}{|T|}
\end{equation}
for all $(T,X)\in (k_{T_0}\times k_{X_0})^{-1}(B_{\epsilon''}^2\setminus N_{\epsilon'})\cap\mathbb{C}^2$.
Since $k_1^{-1}(B_{\epsilon''})=k_\infty^{-1}(B_{\epsilon''})\supset W_{1,\epsilon}$ for a sufficiently small $\epsilon>0$, we have 
\begin{equation}
(k_{T_0}\times k_{X_0})^{-1}(B_{\epsilon''}^2\setminus N_{\epsilon'})\cap\mathbb{C}^2
  \supset ((k_{T_0}^{-1}(B_{\epsilon''})\times k_{X_0}^{-1}(B_{\epsilon''}))\setminus N)\cap\mathbb{C}^2
\supset W_{1,\epsilon}^2\setminus N,
\end{equation}
and 
the assertion.
\end{proof}

\ 

\ 

\begin{flushleft}
\begin{small}

{Y. Komori}: 
{Department of Mathematics,
Rikkyo University,
Nishi-Ikebukuro, Toshima-ku,
Tokyo 171-8501, 
Japan}

e-mail: \texttt{komori@rikkyo.ac.jp}

\

{H. Tsumura}: 
{Department of Mathematics and Information Sciences, Tokyo Metropolitan University, 1-1, Minami-Ohsawa, Hachioji, Tokyo 192-0397 Japan}

e-mail: \texttt{tsumura@tmu.ac.jp}
\end{small}
\end{flushleft}

\end{document}